\let\subset\subseteq 
\let\eps\varepsilon
\let\rho\varrho
\def\dcup{\dot\cup}
\def\NATS{\mathbb{N}}
\def\Prob{\mathbb{P}}
\def\Exp{\mathbb{E}}
\def\bigO{\mathcal{O}}
\def\deltaH{\delta^*}
\def\cM{\mathcal{M}}
\def\cI{\mathcal{I}}
\def\cF{\mathcal{F}}
\def\cG{\mathcal{G}}
\def\cH{\mathcal{H}}
\def\cE{\mathcal{E}}
\def\cR{\mathcal{R}}
\def\cS{\mathcal{S}}
\let\l\ell
\def\clqed{\hfill\scalebox{.6}{$\Box$}}
\def\itmit#1{\rm ({\it #1\,})}
\def\rom{\itmit{\roman{*}}}
\def\abc{\itmit{\alph{*}}}
\newtheorem{theorem}{Theorem}
\newtheorem{lemma}[theorem] {Lemma}   
\newtheorem{corollary}[theorem] {Corollary}   
\newtheorem{proposition}[theorem] {Proposition}
\newtheorem{definition}[theorem] {Definition} 
\theoremstyle{remark} 
\newtheorem{AuxiliaryCl}[theorem]{Claim} 
\newcommand{\oldqed}{}
\newenvironment{clproof}[1][Proof]{
  \renewcommand{\oldqed}{\qedsymbol}
  \renewcommand{\qedsymbol}{\clqed}
  \begin{proof}[#1]
}{
  \end{proof}
  \renewcommand{\qedsymbol}{\oldqed}
}
\newcommand{\EMAIL}[1]{  \textit{E-mail}: \texttt{#1}} 
\newcommand{\By}[2]{\overset{\mbox{\tiny{#1}}}{#2}}
\newcommand{\ByRef}[2]{   \By{\eqref{#1}}{#2} }
\newcommand{\gBy}[1]{     \By{#1}{>} }
\newcommand{\geBy}[1]{    \By{#1}{\ge} }
\newcommand{\eqByRef}[1]{ \ByRef{#1}{=} }
\newcommand{\leByRef}[1]{ \ByRef{#1}{\le} }
\newcommand{\geByRef}[1]{ \ByRef{#1}{\ge} }
\newcommand{\Tur}[2]{\mathrm{T}_{#1}({#2})} 
\newcommand{\tur}[2]{t_{#1}({#2})}          
\newcommand{\turm}[3]{t_{#1}({#2,#3})}
\newcommand{\Turm}[3]{\mathrm{T}_{#1}({#2,#3})}
\newcommand{\neighbor}{\Gamma}
\newcommand{\Gnq}[1][q]{G(n,#1)}
\DeclareMathOperator{\link}{Link}
\DeclareMathOperator{\Bin}{Bin}
\title{Tur\'annical hypergraphs}
  \author[Peter Allen]{Peter Allen*}
  \author[Julia B\"ottcher]{Julia B\"ottcher*}
  \thanks{
    *
    Instituto de Matem\'atica e Estat\'{\i}stica, Universidade de
    S\~ao Paulo, Rua do Mat\~ao~1010, 05508--090~S\~ao Paulo, Brazil.
   \EMAIL{allen|julia@ime.usp.br}}
  \author[Jan Hladk\'y]{Jan Hladk\'y\dag}
  \thanks{
    \dag\ 
    DIMAP and Department of Computer Science,
    University of Warwick,
    Coventry, CV4~7AL, UK
    \EMAIL{honzahladky@gmail.com}}
  \author[Diana Piguet]{Diana Piguet\ddag}
\thanks{\ddag\ 
    School of Mathematics, 
    University of Birmingham, 
    Edgbaston, Birmingham, 
    B15~2TT,
    UK
    \EMAIL{D.Piguet@bham.ac.uk}}
  \thanks{
    PA, JH, and DP were supported by DIMAP, EPSRC award
    EP/D063191/1.
    PA was partially supported by FAPESP (Proc.~2010/09555-7), and
    JB by FAPESP (Proc.~2009/17831-7).
    PA and JB are grateful to NUMEC/USP, N\'ucleo de Modelagem Estoc\'astica e
    Complexidade of the University of S\~ao Paulo, for supporting this
    research.}
\date{\today}
\begin{document}
\begin{abstract}
  This paper is motivated by the question of how global and dense restriction
  sets in results from extremal combinatorics can be replaced by less global and
  sparser ones. The result we consider here as an example is Tur\'an's theorem,
  which deals with graphs $G=([n],E)$ such that no member of the
  \emph{restriction set} $\cR=\binom{[n]}{r}$ induces a copy of~$K_r$.

  Firstly, we examine what happens when this restriction set is replaced by
  $\cR=\{X\in \binom{[n]}{r}\colon X\cap[m]\neq\emptyset\}$.  That is, we
  determine the maximal number of edges in an $n$-vertex such that no $K_r$
  hits a given vertex set.
  
  Secondly, we consider sparse random restriction sets.  An $r$-uniform
  hypergraph~$\mathcal R$ on vertex set $[n]$ is called
  \emph{Tu\-r\'an\-ni\-cal} (respectively \emph{$\eps$-Tur\'an\-ni\-cal}), if
  for any graph $G$ on $[n]$ with more edges than the Tur\'an number
  $\tur{r}{n}$ (respectively $(1+\eps)\tur{r}{n}$\,), no hyperedge
  of~$\cR$ induces a copy of~$K_r$ in~$G$.  We determine the thresholds for
  random $r$-uniform hypergraphs to be Tur\'annical and to
  be $\eps$-Tur\'annical.
  
  Thirdly, we transfer this result to sparse random graphs, using techniques
  recently developed by Schacht [Extremal results for random discrete
  structures] to prove the Kohayakawa-{\L}uczak-R\"odl Conjecture on Tur\'an's
  theorem in random graphs.
\end{abstract}
\keywords{Tur\'an's Theorem, extremal combinatorics, random hypergraphs}
\maketitle

\section{Introduction}
\label{sec:intro}

Tur\'an's theorem~\cite{Tur}, whose proof in 1941 marks the birth of
extremal graph theory, determines the maximal number of edges in an
$n$-vertex graph without cliques of size $r$. Let $\Tur{r}{n}$ denote the
complete \emph{balanced} $(r-1)$-partite graph on $n$ vertices (i.e., the part
sizes of $\Tur{r}{n}$ are as equal as possible) and $\tur{r}{n}$ the
number of its edges.

\begin{theorem}[Tur\'an~\cite{Tur}] 
  Given $n$ and $r$, let $G$ be an $n$-vertex graph that contains no copy of
  $K_r$. Then $G$ has at most $\tur{r}{n}$ edges.
\end{theorem}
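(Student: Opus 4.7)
The plan is to prove Tur\'an's theorem via Zykov's symmetrization, which both feels natural in this context (the paper later wants a structural understanding of extremal $K_r$-free graphs) and avoids any heavy machinery. I will first reduce to the case where the extremal graph~$G$ is a complete multipartite graph, then bound the number of parts using $K_r$-freeness, and finally show that balancing the part sizes only helps.

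\textbf{Step 1 (Symmetrization).} Let~$G$ be a $K_r$-free graph on~$[n]$ with the maximum number of edges. I would argue that non-adjacency can be assumed to be an equivalence relation on~$V(G)$. Suppose $u,v \in V(G)$ are non-adjacent and, without loss of generality, $\deg(u)\ge\deg(v)$. Deleting all edges at~$v$ and joining~$v$ to~$\neighbor(u)$ instead yields a graph~$G'$ which is still $K_r$-free (any $K_r$ in~$G'$ using~$v$ would give, after replacing~$v$ by~$u$, a $K_r$ in~$G$), and $e(G') \ge e(G)$. Iterating this ``clone or do nothing'' operation between every pair of non-adjacent vertices, one checks that non-adjacency becomes transitive, so the resulting graph is complete multipartite with at least as many edges as~$G$.

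\textbf{Step 2 (Bounding the number of parts).} A complete multipartite graph with~$r$ or more parts contains~$K_r$, so $G$ has at most $r-1$ parts. Hence $G$ is a complete $s$-partite graph for some $s \le r-1$, and adding a part (if $s<r-1$) only adds edges; thus we may assume $G$ is complete $(r-1)$-partite.

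\textbf{Step 3 (Balancing).} Finally, I would show that among complete $(r-1)$-partite graphs on~$n$ vertices, the number of edges is maximized precisely by~$\Tur{r}{n}$. If two parts~$A$ and~$B$ satisfy $|A| \ge |B|+2$, then moving a single vertex from~$A$ to~$B$ changes the edge count by $|A|-|B|-1 \ge 1 > 0$, a strict increase. Therefore any maximizer has part sizes differing by at most one, which characterizes~$\Tur{r}{n}$, giving $e(G) \le \tur{r}{n}$.

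The only subtle point is verifying that Step~1 actually terminates with transitive non-adjacency: once one performs the symmetrization between all pairs as needed, every vertex~$v$ will share its non-neighborhood with some fixed ``representative,'' which yields the equivalence relation; this is the standard Zykov argument and presents the only place where one has to be a bit careful about the order of operations. The rest of the argument is an easy convexity computation.
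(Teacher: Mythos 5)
The paper does not actually prove Tur\'an's theorem; it cites it from~\cite{Tur} as the starting point for the extensions that follow. So there is no ``paper's proof'' to match against. Your Zykov symmetrization argument is a correct and standard proof, and it is pleasantly consonant with the paper's later methods: the same degree-comparison/cloning move reappears in Claim~\ref{cl:PartiteStructure} of the proof of Theorem~\ref{thm:TuranExt}, where it is used to force complete multipartite structure on $G[M]$ and $G[X]$. (The paper's own argument for Theorem~\ref{thm:TuranExt} in the case $n\le(r-1)m$ actually invokes Tur\'an's theorem as a lemma after peeling off maximal cliques, so it is not a self-contained reproof either.)

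One caveat in Step~1, which you sensibly flag but do not quite resolve: the single-pair cloning operation changes the edge count only weakly (by $\ge 0$), so an unstructured ``iterate over all pairs'' loop has no termination or convergence guarantee, and it is not automatic that the process ends with transitive non-adjacency. There are two clean fixes. Either argue directly by contradiction from maximality: if non-adjacency were not transitive, choose $u,v,w$ with $uv$ and $vw$ non-edges but $uw$ an edge; maximality forces $\deg(v)\ge\deg(u)$ and $\deg(v)\ge\deg(w)$ (else a single cloning would strictly gain edges), and then simultaneously cloning both $u$ and $w$ into $v$ changes the edge count by $2\deg(v)-\deg(u)-\deg(w)+1\ge 1>0$, contradicting maximality. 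Or structure the iteration: pick a vertex $v_1$ of maximum degree, clone every non-neighbour of $v_1$ to $v_1$, then recurse inside $\neighbor(v_1)$; this version manifestly terminates and directly produces a complete multipartite graph. With either fix in place, Steps~2 and~3 (at most $r-1$ parts, and balancing by the convexity computation $|A|-|B|-1>0$) are correct, so the proof goes through.
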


Since 1941, many extensions of Tur\'an's theorem have been
established. Highlights certainly include the Erd\H{o}s-Stone
theorem~\cite{ErdosStone1946} which generalises the result from cliques to
arbitrary $r$-chromatic graphs, and the recent proofs by
Schacht~\cite{Schacht:KLR} and Conlon and Gowers~\cite{ConGow:KLR} of the
Kohayakawa-{\L}uczak-R\"odl conjecture on Tur\'an's theorem in random
graphs.

These extensions, however, do not deviate from the original result as far
as the following aspect is concerned. The restrictions they impose on the
class of objects under study are \emph{global} and \emph{dense}.
More concretely, they require for \emph{every} $k$-tuple of vertices that
these vertices do not host a copy of a given graph~$K$ on~$k$ vertices.
In this paper we are interested in the question of how weakening these
restrictions to less global or sparser ones (that is, forbidding $K$-copies
only for certain $k$-tuples but not all) can influence the conclusion of
the original Tur\'an theorem.

To make a first move, let us investigate the following natural question
which replaces the global restriction of Tur\'an's theorem by a non-global
one. \emph{How many edges can an $n$-vertex graph have such that no $K_r$
intersects a given set of $m$ vertices in this graph?} Our first result
states that the answer is 
\begin{equation}\label{eq:turm}
  \turm{r}{n}{m}:=\begin{cases}
    \tur{r}{n}
    & \text{if $n\le(r-1)m$}\,, \\
    \binom{n}{2} - nm + (r-1)\binom{m+1}{2}
   & \text{otherwise}\,.
  \end{cases}
\end{equation}

\begin{theorem}\label{thm:TuranExt} 
  Given $r\geq 3$ and $m\leq n$, let $G$ be any $n$-vertex graph and
  $M\subset V(G)$ contain $m$ vertices. If no copy of
  $K_r$ in $G$ intersects $M$, then $e(G)\le \turm{r}{n}{m}$.
  Moreover, if $n\le (r-1)m$ and $e(G)=\turm{r}{n}{m}$
  then $G$ is isomorphic to $\Tur{r}{n}$.
\end{theorem}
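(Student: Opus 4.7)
The plan is to prove the upper bound $e(G)\le \turm{r}{n}{m}$ by a double counting argument that exploits, for each $v\in M$, the fact that the neighbourhood $N_G(v)$ is $K_{r-1}$-free, and to read off the boundary regime $n\le (r-1)m$ together with the uniqueness statement from Tur\'an's theorem. Set $L:=V(G)\setminus M$ and $d_L(v):=|N_G(v)\cap L|$ for $v\in M$. The hypothesis is equivalent to $G[N_G(v)]$ being $K_{r-1}$-free for every $v\in M$, so Tur\'an's theorem gives $e(G[N_G(v)\cap L])\le \tur{r-1}{d_L(v)}$, and hence the number of non-edges of $G$ inside $N_L(v):=N_G(v)\cap L$ is at least
\[
f(d_L(v)) := \binom{d_L(v)}{2} - \tur{r-1}{d_L(v)}.
\]
Writing $d_L(v)=q(r-2)+s$ with $0\le s<r-2$ and using the explicit structure of the Tur\'an graph $\Tur{r-1}{d_L(v)}$ yields $f(d_L(v))=(r-2)\binom{q}{2}+sq$, together with the elementary one-variable estimate
\[
h(d) := d - f(d)/m \le (r-2)(m+1)/2 \qquad\text{for every } d\ge 0,
\]
with equality precisely at $d\in\{(r-2)m,\,(r-2)(m+1)\}$.

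Next I would double count: each non-edge $\{a,b\}\subseteq L$ of $G$ lies in $N_L(v)$ for at most $|M\cap N_G(a)\cap N_G(b)|\le m$ vertices $v\in M$, so
\[
\sum_{v\in M} f(d_L(v)) \le m\Bigl(\binom{n-m}{2} - e(G[L])\Bigr).
\]
Combined with $\sum_{v\in M} d_L(v)=e_G(L,M)$ and the bound on $h$, rearranging yields the key inequality
\[
e(G[L]) + e_G(L,M) \le \binom{n-m}{2} + (r-2)\binom{m+1}{2},
\]
whose right-hand side is exactly $\turm{r}{n}{m}$ when $n\ge (r-1)m$.

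The main obstacle is that the bound above only controls the edges of $G$ touching $L$, not those inside $M$. To absorb $e(G[M])$ I would use the extra constraint that for every edge $uv\in E(G[M])$ the joint neighbourhood $N_G(u)\cap N_G(v)$ is $K_{r-2}$-free (otherwise $\{u,v\}$ together with a $K_{r-2}$ in the intersection yields a $K_r$ hitting $M$). Each such $M$--$M$ edge therefore forces further non-edges of $G$ inside $N_L(u)\cap N_L(v)\subseteq L$; feeding these into the double count of the previous step---while charging every non-edge in $L$ only to the pairs $(u,v)$ that really produce it---should tighten the inequality by exactly $e(G[M])$, giving $e(G)\le \turm{r}{n}{m}$ in the regime $n\ge (r-1)m$.

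Finally, in the regime $n\le (r-1)m$ the target is $\tur{r}{n}$, and here $d_L(v)\le n-m\le (r-2)m$ automatically. Re-running the first two steps with the tighter range gives $e(G[L])+e_G(L,M)\le m(n-m)+\tur{r-1}{n-m}$, which already equals $\tur{r}{n}$ at $n=(r-1)m$; when $n<(r-1)m$ one either observes that $G$ is $K_r$-free outright (this is automatic as soon as $n-m<r$) or uses the sharpened Step~3 to conclude that equality in $e(G)=\tur{r}{n}$ can only be attained by a $K_r$-free $G$. In either case Tur\'an's theorem and its uniqueness part finish the argument by identifying $G$ with $\Tur{r}{n}$.
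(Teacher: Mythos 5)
Your double-counting idea is genuinely different from the paper's proof: the paper proceeds by greedily packing vertex-disjoint cliques of size $\ge r$ and estimating via a monotone potential $g(q_1,\dots,q_k)$ in the regime $n\le(r-1)m$, then by edge-swapping to force a partite structure on $M\cup X$ in the regime $n>(r-1)m$. Your Steps 1--2 are correct and clean: using the $K_{r-1}$-freeness of $N_G(v)\cap L$ for each $v\in M$, the double count of non-edges of $G[L]$ together with the estimate $h(d)\le (r-2)(m+1)/2$ does give
\[
e(G[L]) + e_G(L,M) \le \binom{n-m}{2} + (r-2)\binom{m+1}{2} = \turm{r}{n}{m}
\]
for $n\ge(r-1)m$, and I verified both the formula $f(d)=(r-2)\binom{q}{2}+sq$ and the algebra identifying the right-hand side with $\turm{r}{n}{m}$.

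However, Step~3 is a genuine gap. Your bound entirely omits $e(G[M])$, and the right-hand side already equals $\turm{r}{n}{m}$ --- so every edge inside $M$ must be ``paid for'' by showing the inequality from Steps 1--2 is strict by at least $e(G[M])$. The mechanism you propose (that the $K_{r-2}$-freeness of $N_L(u)\cap N_L(v)$ for each edge $uv\in E(G[M])$ forces extra non-edges that can be fed back into the double count) is not worked out and does not obviously succeed: the non-edges produced inside $N_L(u)\cap N_L(v)$ are also non-edges inside $N_L(u)$ and $N_L(v)$ individually, hence may already be implicit in the Tur\'an bound used for those neighbourhoods, and it is not clear how to ``charge every non-edge only to the pairs that really produce it'' without either double-counting or weakening the multiplicity bound $\le m$. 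Some concrete bookkeeping is needed here, and I do not see that it comes out to exactly $e(G[M])$.

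The regime $n<(r-1)m$ has the same problem: your fallback cases only cover $n-m<r$, and whenever $r\le n-m<(r-2)m$ (possible as soon as $m\ge2$) you again appeal to ``the sharpened Step 3.'' The uniqueness statement (that equality at $\turm{r}{n}{m}=\tur{r}{n}$ forces $G\cong\Tur{r}{n}$) likewise rests on the unfinished step: you would need to extract from tightness in the double count that $G$ is $K_r$-free, and that is not spelled out. (A minor point: equality in $h(d)\le(r-2)(m+1)/2$ actually holds for the whole interval $m(r-2)\le d\le(m+1)(r-2)$, not just the two endpoints; this does not affect the argument but you will need the correct equality set if you pursue the uniqueness claim.) As it stands the proof is incomplete; the idea is interesting and may be salvageable, but the core issue --- absorbing $e(G[M])$ --- needs an actual argument rather than a sketch.
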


This means that for fixed~$n$, as~$m$ decreases
from~$n$ (the original scenario of Tur\'an's theorem) to~$0$ (no
restrictions at all) the extremal number~$\turm{r}{n}{m}$ stays equal to
$\tur{r}{n}$ until $m=n/(r-1)$ and then slowly increases (as a quadratic
function in~$m$) to~$\binom{n}{2}$.

A natural way of formalising this deviation from Tur\'an's theorem is to
introduce a hypergraph which contains a hyperedge for every restriction and
then ask for the maximal number~$k$ of edges in a graph respecting these
restrictions. The following definition makes this precise. We shall
distinguish between the case when~$k$ is still the Tur\'an number and when
it is bigger by a certain percentage.

\begin{definition}[Tur\'annical]
\label{def:Turannical}
  Let $r\ge3$ be an integer.
  Let $\mathcal{F}=(V,\mathcal E)$ be an $n$-vertex, $r$-uniform
  hypergraph with vertex set~$V$, which we also occasionally call
  \emph{restriction hypergraph}.
  The hypergraph~$\mathcal{F}$ \emph{detects}
  a graph~$G=(V,E)$ if some $F\in\mathcal{E}$ induces a copy
  of~$K_r$ in~$G$.
  We say that~$\mathcal{F}$ is \emph{exactly Tur\'an\-ni\-cal} or
  simply \emph{Tur\'annical}, if for all graphs $G=(V,E)$ with
  $e(G)>\tur{r}{n}$ the hypergraph $\cF$ detects~$G$.
  In addition, $\mathcal{F}$ is \emph{$\eps$-approximately
  Tur\'annical} or simply \emph{$\eps$-Tur\'annical} if for all graphs $G=(V,E)$
  with $e(G)>(1+\eps)\tur{r}{n}$ the hypergraph $\cF$ detects $G$.
\end{definition}

In other words, a restriction hypergraph is Tur\'annical if it detects all
graphs whose density is large enough that \emph{one} copy of~$K_r$ is forced to
exist, and it is approximately Tur\'annical if it detects all graphs whose
density forces a \emph{positive density} of copies of~$K_r$ to exist (cf.~the
so-called super-saturation theorem, Theorem~\ref{thm:TuranDensity}, by Erd\H{o}s
and Simonovits~\cite{ErdSim:Supersaturated}).

In this language Tur\'an's theorem states that the complete $r$-uniform hypergraph is
Tur\'an\-ni\-cal and Theorem~\ref{thm:TuranExt} concerns restriction
hypergraphs with all hyperedges meeting a specified set of vertices~$M$
(see also the reformulation in Theorem~\ref{thm:Intersection}).

Another natural question is whether the dense complete $r$-uniform
restriction hypergraph from Tur\'an's theorem may be replaced by a much
sparser one. Here, hypergraphs formed by \emph{random restrictions} might
appear promising candidates: A random $r$-uniform hypergraph
$\cR^{(r)}(n,p)$ with hyperedge probability $p$ is a hypergraph on vertex set $[n]$
where hyperedges from $\binom{[n]}{r}$ exist independently from each other with
probability $p$. And in fact, we will show that $\cR^{(r)}(n,p)$ for appropriate
values of $p=p_n$ produces the Tur\'annical hypergraphs and
$\eps$-Tur\'annical hypergraphs with the fewest number of hyperedges, up to
constant factors (compare Proposition~\ref{prop:density} with
Theorems~\ref{thm:random:approx} and~\ref{thm:random:exact}).  In addition,
building on the aforementioned work of Schacht~\cite{Schacht:KLR} we obtain
a corresponding result for the random graphs version of Tur\'an's
theorem (see Theorem~\ref{thm:TurForGnp}).

Before we state and explain these results in detail in the following
section, let us remark that the observed behaviour concerning the
\emph{evolution} of $\cR^{(r)}(n,p)$ as we decrease the density of the
random restrictions is somewhat different from the one described for
Theorem~\ref{thm:TuranExt} above: When~$p$ decreases from~$1$ to~$0$, then
$\cR^{(r)}(n,p)$ stays (asymptotically almost surely) Tur\'annical for a long time, until
$p_n\sim n^{3-r}$. Then, between $p_n\sim n^{3-r}$ and $p_n\sim n^{2-r}$
the hypergraph $\cR^{(r)}(n,p)$ is $\eps$-Tur\'annical for
\emph{arbitrarily} small (but fixed) $\eps>0$, and for even smaller $p_n$
the hypergraph $\cR^{(r)}(n,p)$ fails to be $\eps$-Tur\'annical for
\emph{any} non-trivial~$\eps$. As we shall see later, this sudden change of
behaviour is caused by the supersaturation property of graphs
(cf.~Theorem~\ref{thm:TuranDensity}).
Put differently, there is a qualitative
difference between random restriction sets detecting graphs with enough
edges to force a single $K_r$ to exist and restriction sets detecting graphs
with enough edges to force a positive $K_r$-density, but the value of this density is not of
big influence.

\smallskip

\noindent {\bf Organisation.} The remainder of this paper is organised as
follows.  In Section~\ref{sec:results} we state our results.  In
Section~\ref{sec:det} we then prove Theorem~\ref{thm:TuranExt} and some
general deterministic lower bounds on the number of hyperedges in
Tur\'annical and approximately Tur\'annical hypergraphs. The proofs for our
results concerning random restrictions for general graphs are contained in
Sections~\ref{sec:random:approx} and~\ref{sec:random:exact} and those
concerning random restrictions for random graphs in
Section~\ref{sec:random-random}. In Section~\ref{sec:SharpThresholds} we
argue that the hypergraph property of being Tur\'annical has a sharp
threshold; that is, the threshold determined in one of our main theorems,
Theorem~\ref{thm:random:approx}, is sharp. In
Section~\ref{sec:conc}, finally, we explain how the concept of random
restrictions generalises to other problems besides Tur\'an's theorem. We
provide an outlook on which phenomena may be observed with regard to
questions of this type and the corresponding evolution of random
restrictions, and how they may differ from the Tur\'an case treated in this
paper.

\section{Results}
\label{sec:results}

In this section we give our results. We start with non-global but dense
restrictions and then turn to sparse restrictions. Finally we consider
sparse restrictions for sparse random graphs.

\subsection{Restrictions that are not global}

For completeness, let us start with a formulation of the problem on
non-global restrictions addressed in Theorem~\ref{thm:TuranExt} 
in the hypergraph terms introduced in Definition~\ref{def:Turannical}.
We define $\cI^{(r)}(n,m)=([n],\cE)$ as the $r$-uniform hypergraph with
hyperedges $\cE:=\big\{K\in\tbinom{n}{r} \colon K\cap[m]\neq\emptyset
\big\}$.

\begin{theorem}
  Let $r\ge 3$ and $n$ and $m\le n$ be positive integers.
  \begin{enumerate}[label=\abc]
    \item\label{thm:Intersection:exact}
      The hypergraph $\cI^{(r)}(n,m)$ is Tur\'annical if and only if $n\leq(r-1)m$. 
    \item\label{thm:Intersection:approx}
      For every $\delta>0$ there exists $\eps>0$ such that if
      $n\ge(1+\delta)(r-1)m$, then $\cI^{(r)}(n,m)$ is not $\eps$-Tur\'annical.
  \end{enumerate}
  \label{thm:Intersection}
\end{theorem}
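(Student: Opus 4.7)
Both parts are immediate consequences of Theorem~\ref{thm:TuranExt} together with the explicit formula~\eqref{eq:turm} for~$\turm{r}{n}{m}$. The key observation is that, by definition, $\cI^{(r)}(n,m)$ detects a graph~$G$ on~$[n]$ precisely when~$G$ contains a copy of~$K_r$ meeting the prescribed set~$[m]$, so the maximum number of edges in an undetected graph is \emph{exactly} $\turm{r}{n}{m}$.

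For part~\ref{thm:Intersection:exact}, the hypergraph $\cI^{(r)}(n,m)$ is Tur\'annical iff every graph $G$ on $[n]$ with $e(G)>\tur{r}{n}$ is detected, which by the above observation is equivalent to $\turm{r}{n}{m}\le\tur{r}{n}$. By~\eqref{eq:turm}, this holds iff $n\le(r-1)m$. For the ``only if'' direction it is also useful to note that when $n>(r-1)m$ the extremal graph furnished by Theorem~\ref{thm:TuranExt} witnesses that $\cI^{(r)}(n,m)$ is not Tur\'annical.

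For part~\ref{thm:Intersection:approx} we have to produce, given $\delta>0$, an $\eps=\eps(\delta,r)>0$ and a graph~$G^*$ on~$[n]$ that is undetected by $\cI^{(r)}(n,m)$ and satisfies $e(G^*)>(1+\eps)\tur{r}{n}$, whenever $n\ge(1+\delta)(r-1)m$. I would simply take $G^*$ to be the extremal graph from Theorem~\ref{thm:TuranExt} with $M=[m]$, so that $e(G^*)=\turm{r}{n}{m}$ and no $K_r$ of $G^*$ meets~$[m]$. It then remains to verify that
\[
  \turm{r}{n}{m}\;\ge\;(1+\eps)\,\tur{r}{n}
\]
for some $\eps=\eps(\delta,r)>0$. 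Since the right-hand side of~\eqref{eq:turm} is a quadratic in~$m$ which is decreasing on the range $0\le m\le n/(r-1)$, it suffices to substitute the largest admissible value $m_0=\lfloor n/((1+\delta)(r-1))\rfloor$ and to use $\tur{r}{n}\le\frac{r-2}{2(r-1)}n^2$; a direct calculation then shows
\[
  \turm{r}{n}{m_0}-\tur{r}{n}\;\ge\;\frac{n^2}{2(r-1)}\left(\frac{\delta}{1+\delta}\right)^{\!2} - O(n),
\]
which after dividing through by $\tur{r}{n}=\Theta(n^2)$ yields the required $\eps$ for all $n$ above some threshold depending on $\delta$ and~$r$; the finitely many remaining small values of~$n$ can be absorbed by shrinking~$\eps$.

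The only real obstacle is this last calculation, which is arithmetically routine but must be handled carefully so that the resulting~$\eps$ is uniform in~$n$; the floor/ceiling error and the lower-order terms in the Tur\'an number contribute only $O(n)$ corrections, negligible against the leading $\Theta(n^2)$ gap. The hypothesis $r\ge3$ is used only to ensure $r-2>0$, so that the coefficient of $n^2$ in $\tur{r}{n}$ is strictly positive.
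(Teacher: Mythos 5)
Your approach is exactly the one the paper indicates: the paper remarks that Theorem~\ref{thm:Intersection} "is easy to deduce" from Theorem~\ref{thm:TuranExt}, because the latter determines exactly the maximum number of edges $\turm{r}{n}{m}$ of an undetected graph, so $\cI^{(r)}(n,m)$ is Tur\'annical iff $\turm{r}{n}{m}\le\tur{r}{n}$ and fails to be $\eps$-Tur\'annical as soon as $\turm{r}{n}{m}>(1+\eps)\tur{r}{n}$. Your asymptotic computation
\[
\turm{r}{n}{m_0}-\tur{r}{n}\;\approx\;\tfrac{n^2}{2(r-1)}\Bigl(\tfrac{\delta}{1+\delta}\Bigr)^2
\]
with $m_0$ the largest admissible value of $m$ is correct and is what one needs for~\ref{thm:Intersection:approx}.

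One genuine subtlety, which you brush over and which is also left implicit in the paper: the ``only if'' direction of part~\ref{thm:Intersection:exact} requires $\turm{r}{n}{m}>\tur{r}{n}$ whenever $n>(r-1)m$, and this is \emph{not} an immediate consequence of the formula~\eqref{eq:turm}. It actually fails for some small $n$: for example with $r=3$, $m=1$, $n=4$ one has $\turm{3}{4}{1}=4=\tur{3}{4}$, and indeed $\cI^{(3)}(4,1)$ \emph{is} Tur\'annical. Consequently your device for part~\ref{thm:Intersection:approx} of "absorbing the finitely many remaining small values of $n$ by shrinking $\eps$" does not work there: when $\turm{r}{n}{m}=\tur{r}{n}$ the hypergraph is Tur\'annical, hence $\eps$-Tur\'annical for \emph{every} $\eps>0$, and no choice of $\eps$ can rescue the claim. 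This is a flaw shared with the paper's statement, so you should not be faulted for following its lead, but a careful writeup should either restrict to $n$ large relative to $m$ and $r$, or verify directly that $\turm{r}{n}{m}>\tur{r}{n}$ in the range of parameters actually claimed.
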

It is easy to deduce  Theorem~\ref{thm:Intersection} from Theorem~\ref{thm:TuranExt}, which
determines the
maximum number of edges of a graph $G$ which is not detected by
$\cI^{(r)}(n,m)$ exactly, also for the case $n>(r-1)m$.
We prove Theorem~\ref{thm:TuranExt} in Section~\ref{sec:det}.

\subsection{Sparse restrictions}

Next we consider sparser hypergraphs. An easy counting argument (which
we defer to Section~\ref{sec:det}) gives the following lower bounds for the
density of Tur\'annical and approximately Tur\'annical hypergraphs.

\begin{proposition}\label{prop:density}
  Let $r\ge 3$ and $n\ge 5$ be integers, let $\eps$ be a real with
  $0<\eps\le1/(2r)$, and let $\cF=([n],\cE)$ be an $r$-uniform hypergraph.
  \begin{enumerate}[label=\abc]
    \item\label{prop:density:exact}
      If $|\cE|<\frac{n(n-1)(n-2)}{r(r-1)^2(r-2)}$ then~$\cF$ is not Tur\'annical.
    \item\label{prop:density:approx}
      If $|\cE|\le(1-r\eps)\frac1{4r}n^2$, then~$\cF$ is not
      $\eps$-Tur\'annical. 
  \end{enumerate}
\end{proposition}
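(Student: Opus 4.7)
For both parts I prove the contrapositive: assuming $|\cE|$ satisfies the claimed inequality, I exhibit a graph $G$ on $[n]$ with enough edges such that no hyperedge of $\cE$ induces a $K_r$ in $G$, so that $\cF$ fails to detect $G$.

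For part~\ref{prop:density:approx} I use greedy edge deletion. Starting from the complete graph $K_n$, I process the hyperedges of $\cE$ in any order; for each $K\in\cE$ that still induces a $K_r$ in the current graph, I delete one of the $\binom{r}{2}$ edges inside $K$. This uses at most $|\cE|$ deletions, so the resulting graph $G$ has $e(G)\ge\binom{n}{2}-|\cE|$, and by construction no $K\in\cE$ induces a $K_r$ in $G$. Using $\tur{r}{n}\le(1-\tfrac{1}{r-1})\binom{n}{2}$, a routine arithmetic check shows the hypothesis $|\cE|\le(1-r\eps)n^2/(4r)$ yields $e(G)>(1+\eps)\tur{r}{n}$, so $\cF$ is not $\eps$-Tur\'annical. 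The only work is the numerical comparison.

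For part~\ref{prop:density:exact} greedy deletion only gives an $O(n^2)$ lower bound on~$|\cE|$, whereas the threshold is of order $n^3$. Rewriting the bound as $\binom{n}{3}/((r-1)\binom{r}{3})$ suggests a double counting argument at the level of triples. Writing $d(T)=|\{K\in\cE:T\subset K\}|$ for each $T\in\binom{[n]}{3}$, the identity $\sum_T d(T)=|\cE|\binom{r}{3}$ combined with the hypothesis on $|\cE|$ forces the average of $d(T)$ strictly below $1/(r-1)$. In particular many triples $T$ are \emph{uncovered} (no hyperedge of $\cE$ extends $T$), and for each such $T^*$ every $r$-set containing $T^*$ is automatically outside $\cE$; it then suffices to exhibit a graph $G$ with $e(G)>\tur{r}{n}$ whose $K_r$-copies all extend $T^*$.

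The main obstacle is exactly this construction. The condition ``every $K_r$ in $G$ contains $T^*$'' is equivalent to $G-v$ being $K_r$-free for each $v\in T^*$, which by Tur\'an's theorem caps $e(G)$ at about $\tur{r}{n-1}+(n-1)\approx\tur{r}{n}+(n-1)/(r-1)$, so only an $O(n/r)$ margin above $\tur{r}{n}$ is available. Realizing this slack while matching the explicit factor $r(r-1)^2(r-2)$ in the bound requires a careful choice of base structure: for $r\ge 4$ I would place $T^*$ into three distinct parts of a modified Tur\'an-like graph and control the subgraph on $[n]\setminus T^*$ accordingly, while for $r=3$, where $T^*$ cannot be spread across three parts of a bipartite Tur\'an graph, a different construction is needed, most naturally a split graph $K_A\vee\overline{K_{[n]\setminus A}}$ with $A$ chosen so that each $K\in\cE$ meets~$A$ in at most~$r-2$ vertices.
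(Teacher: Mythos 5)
Your proof of part~\ref{prop:density:approx} is essentially the paper's: delete one edge per hyperedge starting from $K_n$, and check the arithmetic. That part is fine.

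For part~\ref{prop:density:exact}, however, the approach has a real gap. Your double counting identifies an \emph{uncovered triple} $T^*$ (no hyperedge of $\cE$ contains it) and you then try to build a graph $G$ with $e(G)>\tur{r}{n}$ all of whose $K_r$'s contain $T^*$. But that target is too strong to be realizable with the available slack. If every $K_r$ contains $T^*=\{a,b,c\}$, then $G-a$, $G-b$, $G-c$ are each $K_r$-free; your own Tur\'an estimate then forces each of $a,b,c$ to have degree $>\,(n-1)-\lfloor (n-1)/(r-1)\rfloor$, while simultaneously $N(a)$ (say, inside a near-Tur\'an $G-a$) must be pruned so that the only $K_{r-1}$'s through it pass through $\{b,c\}$. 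Carrying out that pruning costs on the order of a part size, which eats the entire margin $\lfloor (n-1)/(r-1)\rfloor$; the natural Tur\'an-based constructions (put $T^*$ across three parts and add an edge, or trim neighbourhoods so that added $K_r$'s are forced through $b,c$) all come out with $e(G)<\tur{r}{n}$, not above it. The $r=3$ fallback also fails: take $\cE$ to be all triples through a fixed vertex, which is well below the threshold. Then the "shadow" of $\cE$ on pairs is the complete graph $K_n$, so any set $A$ meeting every $K\in\cE$ in at most one vertex has $|A|\le 1$, and $K_A\vee\overline{K_{[n]\setminus A}}$ has only $n-1$ edges, far below $\tur{3}{n}$.

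The fix is to count on \emph{pairs} rather than triples. The paper sums the link sizes, $\sum_{\{u,v\}} e\big(\link(u,v)\big)=\binom{r}{2}|\cE|$, and the density hypothesis yields a pair $\{u,v\}$ whose link covers a vertex set $L$ of size at most $n/(r-1)$. One then takes $\Tur{r}{n}$ with $u,v$ in one partition class and $L$ entirely inside another class, and adds the edge $uv$. Every $K_r$ in the resulting graph uses $uv$ together with $r-2$ pairwise-adjacent vertices spread over $r-2$ distinct parts; if this $r$-set were a hyperedge, those $r-2$ vertices would lie in $L$, hence in a single part, contradicting pairwise adjacency (here $r-2\ge 2$ is used; $r=3$ is handled separately by placing $u,v$ and all of $L$ in one side of the bipartition). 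Thus the requirement on the forbidden $K_r$'s is not that they avoid a fixed triple, but only that they avoid the few $r$-sets through $\{u,v\}$ whose remaining vertices all lie in $L$ --- a strictly weaker and achievable condition. Your triple count does in fact imply the existence of such a pair (average the number of uncovered extensions over pairs), but you would still need to switch to the Tur\'an-plus-one-edge construction rather than the fixed-triple or split-graph constructions you propose.
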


These density bounds are sharp up to constant factors. In fact,
in random $r$-uniform hypergraphs their magnitudes provide thresholds for being Tur\'annical
and approximately Tur\'annical, respectively, as the following two results show. 
We first state the result concerning the threshold for being approximately Tur\'annical.

\begin{theorem}
\label{thm:random:approx}
  For every integer $r\ge 3$ and every $0<\eps\le 1/(2r)$ there
  are $c=c(r,\eps)>0$ and $C=C(r,\eps)>0$ such that for any sequence $p=p_n$ of probabilities
  \begin{equation*}
    \lim_{n\to\infty}
    \Prob\big( \text{$\cR^{(r)}(n,p)$ is $\eps$-Tur\'annical}\,\big) =
    \begin{cases}
      0, & \text{if $p_n\le c n^{2-r}$ for all $n\in\NATS$}, \\ 
      1, & \text{if $p_n\ge C n^{2-r}$ for all $n\in\NATS$}.
   \end{cases}
  \end{equation*}
\end{theorem}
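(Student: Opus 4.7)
The plan is to prove the two halves separately: the $1$-statement is a short union-bound argument driven by supersaturation, and for it I would invoke the Erd\H{o}s--Simonovits supersaturation theorem (Theorem~\ref{thm:TuranDensity} of the paper), which supplies a constant $c_0=c_0(r,\eps)>0$ such that every graph $G$ on $[n]$ with $e(G)>(1+\eps)\tur{r}{n}$ contains at least $c_0n^r$ copies of $K_r$. For any such $G$, let $\cS_G\subset\binom{[n]}{r}$ denote the set of $K_r$-copies of $G$. The events $\{F\in\cR^{(r)}(n,p)\}$ are independent across $F$, so
\begin{equation*}
\Prob\bigl[\,\cR^{(r)}(n,p)\cap\cS_G=\emptyset\,\bigr]\le(1-p)^{c_0n^r}\le\exp(-c_0Cn^2).
\end{equation*}
A union bound over the $2^{\binom{n}{2}}$ possible graphs on $[n]$ yields a failure probability of at most $\exp\bigl(\tbinom{n}{2}\log 2-c_0Cn^2\bigr)=o(1)$ once $Cc_0>\tfrac12\log 2$; hence $C=C(r,\eps)$ large enough suffices.

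The $0$-statement requires a random construction together with an alteration. I would fix a near-equipartition of $[n]$ into classes $V_1,\ldots,V_{r-1}$, let $G_0:=\Tur{r}{n}$ be the corresponding Tur\'an graph, and sample an intra-class random graph $H$ in which each pair inside some $V_i$ appears independently with probability $q:=2\eps(r-2)$ (which lies in $(0,1)$ since $\eps\le1/(2r)$). A Chernoff bound then gives $e(G_0\cup H)\ge(1+\tfrac32\eps)\tur{r}{n}$ asymptotically almost surely. For each $r$-set $F\in\binom{[n]}{r}$ write $d(F):=\sum_i\binom{|F\cap V_i|}{2}$; pigeonhole forces $d(F)\ge1$ (since the $r$ vertices of $F$ meet only $r-1$ classes), and $G_0\cup H$ induces $K_r$ on $F$ exactly when all $d(F)$ intra-class pairs of $F$ lie in $H$, an event of probability $q^{d(F)}$. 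Treating $\cR:=\cR^{(r)}(n,p)$ and $H$ as independent, the expected number $X$ of hyperedges of $\cR$ which induce a $K_r$ in $G_0\cup H$ satisfies
\begin{equation*}
\Exp[X]\;=\;p\sum_{F\in\binom{[n]}{r}}q^{d(F)}\;=\;\Theta_r(pqn^r)\;=\;\Theta_r(cqn^2),
\end{equation*}
the sum being dominated by the $d(F)=1$ terms. Picking $c=c(r,\eps)$ so small that $\Exp[X]<\tfrac14\eps\tur{r}{n}$, Markov's inequality combined with the Chernoff event for $e(G_0\cup H)$ shows that with probability $1-o(1)$ over the joint sample both $X<\tfrac12\eps\tur{r}{n}$ and $e(G_0\cup H)>(1+\tfrac32\eps)\tur{r}{n}$ hold; by Fubini, for asymptotically almost every $\cR$ there exists $H$ realizing both events. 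Deleting one edge of $G_0\cup H$ inside each detected $r$-set then produces a graph $G'$ with $e(G')>(1+\eps)\tur{r}{n}$ not detected by $\cR$, certifying that $\cR$ fails to be $\eps$-Tur\'annical.

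The step I expect to require the most care is the defect bookkeeping in $\sum_{F}q^{d(F)}$: one must check that the $d(F)=1$ configurations (two vertices in one $V_i$, one in each of the remaining $r-2$ classes) furnish the leading $\Theta_r(n^r)$ contribution, while configurations with $d(F)\ge2$ lose at least an additional factor of $q$ and are therefore absorbed into the leading term. Once that is done, the rest is routine: Chernoff concentration for $e(G_0\cup H)$, Markov's inequality for $X$, and the slack between $(1+\tfrac32\eps)$ and $(1+\eps)$ built into the choice of $q$ to absorb the at most $\tfrac12\eps\tur{r}{n}$ edges removed during the alteration.
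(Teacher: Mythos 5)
Your proof of the $1$-statement matches the paper's exactly: supersaturation gives $\delta n^r$ copies of $K_r$ in every graph with $e(G)>(1+\eps)\tur{r}{n}$, and a union bound over $2^{\binom n2}$ graphs finishes it. No issues there.

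For the $0$-statement you take a genuinely different route from the paper. The paper reduces the $0$-statement to the \emph{deterministic} Proposition~\ref{prop:density}\ref{prop:density:approx}: any $r$-uniform hypergraph with at most $(1-r\eps)\frac1{4r}n^2$ hyperedges is not $\eps$-Tur\'annical (witnessed by starting from $K_n$ and deleting one edge per hyperedge), and then notes that $e\big(\cR^{(r)}(n,cn^{2-r})\big)$ is a.a.s.\ below this bound. You instead build a \emph{random} witness: $\Tur{r}{n}$ plus intra-class random edges with density $q=2\eps(r-2)$, and delete edges hitting detected hyperedges. The construction itself is sound and the bookkeeping $\sum_F q^{d(F)}=\Theta_r(qn^r)$ is fine.

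However, there is a genuine gap at the Markov step. You choose $c$ so that $\Exp[X]<\tfrac14\eps\,\tur{r}{n}$ and then claim that ``Markov's inequality\ldots shows that with probability $1-o(1)$\ldots $X<\tfrac12\eps\,\tur{r}{n}$.'' Markov gives only
\begin{equation*}
\Prob\Big[X\ge\tfrac12\eps\,\tur{r}{n}\Big]\le\frac{\Exp[X]}{\tfrac12\eps\,\tur{r}{n}}<\tfrac12\,,
\end{equation*}
a constant, not $o(1)$; and since $\Exp[X]=\Theta_r(cqn^2)$ is of the same order as $\eps\,\tur{r}{n}$, no fixed choice of $c$ turns this into a vanishing bound. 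Consequently your argument only yields $\Prob\big[\cR^{(r)}(n,p)\text{ not $\eps$-Tur\'annical}\big]\ge\tfrac12-o(1)$, whereas the theorem requires this probability to tend to $1$. To close the gap you need concentration, not a first-moment bound. The cheapest fix is to observe that $X\le e\big(\cR^{(r)}(n,p)\big)$ deterministically, and $e\big(\cR^{(r)}(n,p)\big)$ is a binomial with mean $p\binom{n}{r}=\Theta(cn^2)$ that concentrates sharply; for $c$ small this a.a.s.\ forces $X<\tfrac12\eps\,\tur{r}{n}$ — which is precisely the mechanism underlying the paper's Proposition~\ref{prop:density}\ref{prop:density:approx}, so at that point your construction offers no real advantage over simply starting from $K_n$. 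Alternatively you could prove concentration of the $K_r$-count in $G_0\cup H$ (second moment in $H$) and then apply a Chernoff bound to $X$ given $H$, but this is considerably more work than the deterministic route.
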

Clearly, a random $r$-uniform hypergraph with hyperedge probability $p=cn^{2-r}$
asymptotically almost surely (a.a.s.) has less than $\frac{3c}{r!}\binom{n}{2}$
hyperedges. Thus part~\ref{prop:density:approx} of Proposition~\ref{prop:density}
does indeed imply the $0$-statement in Theorem~\ref{thm:random:approx}. A proof
of the $1$-statement is provided in Section~\ref{sec:random:approx}.

Using part~\ref{prop:density:exact} of Proposition~\ref{prop:density}, a
similar calculation shows that a random $r$-uniform hypergraph with hyperedge
probability $p=c n^{3-r}$ with $c>0$ sufficiently small is asymptotically almost surely not
Tur\'annical. The corresponding $1$-statement is given in the
following theorem. For the case $r=3$ the threshold probability is a constant,
which we determine precisely.

\begin{theorem}
\label{thm:random:exact}
  For $r=3$ and $p$ constant we have
  \begin{equation*}
    \lim_{n\to\infty}
    \Prob\big( \text{$\cR^{(3)}(n,p)$ is Tur\'annical}\,\big) =
    \begin{cases}
      0, & \text{if $p\le 1/2$}, \\ 
      1, & \text{if $p>1/2$}.
   \end{cases}
  \end{equation*}
  For every integer $r>3$ there are $c=c(r)>0$ and $C=C(r)>0$ such that
  for any sequence $p=p_n$ of probabilities
  \begin{equation*}
    \lim_{n\to\infty}
    \Prob\big( \text{$\cR^{(r)}(n,p)$ is Tur\'annical} \big) =
    \begin{cases}
      0, & \text{if $p_n\le c n^{3-r}$ for all $n\in\NATS$}, \\ 
      1, & \text{if $p_n\ge C n^{3-r}$ for all $n\in\NATS$}.
   \end{cases}
  \end{equation*}
\end{theorem}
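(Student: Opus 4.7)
The $0$-statements follow from part~\ref{prop:density:exact} of Proposition~\ref{prop:density}: for $r \ge 4$ and $p \le cn^{3-r}$ (with $c=c(r)$ small) or for $r=3$ and $p<1/2$, the expected hyperedge count $p\binom{n}{r}$ sits strictly below the proposition's density threshold $\tfrac{n(n-1)(n-2)}{r(r-1)^2(r-2)}$, so a Chernoff bound yields the $0$-statement a.a.s. The boundary case $r=3,\ p=1/2$, where the mean coincides with the threshold, would require a separate argument, presumably a second-moment calculation showing that a bad graph of the form $T_2(n)+e$ exists a.a.s., with the dependency structure among the $\Theta(2^n n^2)$ candidate bipartition-plus-diagonal-edge pairs controlled carefully.

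For the $1$-statements the plan is to combine three ingredients: monotonicity of detection (reducing to $e(G)=\tur{r}{n}+1$), Theorem~\ref{thm:random:approx} (handling the ``dense'' case $e(G)>(1+\eps)\tur{r}{n}$, since $Cn^{3-r}$ exceeds the $n^{2-r}$ threshold of Theorem~\ref{thm:random:approx} for any fixed $\eps$), and the Erd\H{o}s--Simonovits stability theorem. Stability yields, for every graph $G$ with $\tur{r}{n}<e(G)\le(1+\eps)\tur{r}{n}$ and at most $\delta n^r$ copies of $K_r$, an almost-balanced partition $\pi=(V_1,\dots,V_{r-1})$ of $[n]$ with $|D_\pi(G)|+|M_\pi(G)|\le \beta n^2$, where $D_\pi(G)$ and $M_\pi(G)$ denote the within-part edges of $G$ and between-part non-edges of $G$ respectively, and $\beta\to 0$ as $\eps,\delta\to 0$. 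Graphs with at least $\delta n^r$ copies of $K_r$ are dispatched by a direct union bound since $2^{\binom{n}{2}}(1-p)^{\delta n^r}\to 0$ for $p\ge Cn^{3-r}$.

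The heart of the proof is a deterministic detection lemma based on an a.a.s.\ property $(\star)$ of $\cR$: for every almost-balanced partition $\pi$ and every pair $u,v\in V_i$, the number of hyperedges of the form $\{u,v\}\cup T\in\cR$ with $T$ a transversal of the parts $V_j$, $j\ne i$, is at least $c_\star p(n/(r-1))^{r-2}=\Omega(Cn)$. Property $(\star)$ follows from Chernoff for each $(\pi,u,v)$ and a union bound over the $(r-1)^n\, n^2$ choices, provided $C$ is large enough. Given $(\star)$ and a stable bad $G$ with partition $\pi$, one selects a diagonal edge $d=uv\in D_\pi(G)$ minimising $|M_u|+|M_v|$ (where $M_x=\{w\colon xw\in M_\pi(G)\}$) by a Markov averaging step, and bounds the hyperedges of $\cR$ through $d$ ``killed'' by some $m\in M_\pi(G)$: a case split on whether $m$ shares a vertex with $d$ yields a kill count of $O(C\beta n)$, strictly below the $\Omega(Cn)$ supply of $(\star)$ for $\beta$ small. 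The hard part will be making this kill estimate robust in $\cR$ against every possible adversarial $M_\pi(G)$ simultaneously, since $M_\pi(G)$ depends on $G$ and there are $2^{\Theta(n^2)}$ candidate graphs; the remedy is to phrase a secondary a.a.s.\ condition bounding, for every triple $(\pi,d,m)$, the number of hyperedges of $\cR$ containing both $d$ and $m$, which combined with the averaging choice of $d$ closes the union bound deterministically. For $r=3$ at the sharp threshold $p>1/2$ the framework simplifies: $(\star)$ becomes $|\cR^c_{uv}|<|V_{3-i}|$ for every pair $u,v$ (where $\cR^c_{uv}=\{w\colon\{u,v,w\}\notin\cR\}$), an immediate Chernoff consequence since $\Exp|\cR^c_{uv}|=(1-p)(n-2)$ is bounded away from $n/2$; detection then reduces to finding an edge $uv\in E(G)$ with $|N_G(u)\cap N_G(v)|>|\cR^c_{uv}|$, supplied by stability since $G$ contains a diagonal edge with common neighbourhood of size essentially $\lceil n/2\rceil$.
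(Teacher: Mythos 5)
Your $0$-statement argument for $r>3$ (Chernoff plus Proposition~\ref{prop:density}\ref{prop:density:exact}) is the paper's route. For $r=3$, however, you correctly observe that $p=1/2$ sits exactly on the density threshold, but your proposed fix --- ``presumably a second-moment calculation'' over bipartition-plus-diagonal-edge pairs --- is not what the paper does and is not actually worked out. The paper's Lemma~\ref{lem:r=3} handles all $p\le 1/2$ at once by a simpler first-moment argument: at $p=1/2$ each fixed pair $(u,v)$ still satisfies $\Prob\bigl(e(\link(u,v))\le n/2-2\bigr)\ge 0.4$ (the median of $\Bin(n-2,1/2)$ is $n/2-1$), and the $\lfloor n/2\rfloor$ disjoint pairs give independent trials, so such a deficient pair exists a.a.s.\ and the construction from Proposition~\ref{prop:density}\ref{prop:density:exact} applies. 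As written, your boundary case is a gap.

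The bigger issue is your $1$-statement. Your plan is to exhibit a property of $\cR$ alone (property $(\star)$ plus a ``secondary condition'') that deterministically forces detection of every stable bad $G$, avoiding a union bound over graphs. The secondary condition you propose --- bounding, for every $(\pi,d,m)$, the number of hyperedges of $\cR$ through both $d$ and $m$ --- cannot be made $O(1)$ a.a.s.\ at $p=\Theta(n^{3-r})$. For $|m\cap d|=1$ the triple $d\cup m$ spans three vertices and the number of $\cR$-hyperedges through a fixed triple is $\Bin$ with mean $\Theta(C)$; the maximum over the $\Theta(n^3)$ triples is a.a.s.\ $\Theta(\log n/\log\log n)$. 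With $|M_u|+|M_v|\lesssim\beta n$ after averaging, the kill count is then $O(\beta n\log n/\log\log n)$, which for any fixed $\beta>0$ eventually overwhelms the $\Theta(Cn)$ supply from $(\star)$. Closing this would require bounding, uniformly over sets $W$ of size $\beta n$, the aggregate $\sum_{w\in W}N_{uvw}$ rather than a per-triple maximum --- a genuinely different secondary condition with a Chernoff-plus-union-bound over $\binom{n}{\beta n}$ sets, and a $C$ chosen after $\beta$. You don't propose this, so as stated the argument does not close.

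The paper takes a different route entirely. Rather than a single detection property of $\cR$, it partitions the bad graphs into structural classes (Lemma~\ref{lem:structure} for $r>3$; the finer Corollary~\ref{cor:3structure}, including a ``book'' case you omit, for $r=3$) and for the $\eps$-close-partition class it counts graphs by the number $\ell$ of non-crossing edges (Lemma~\ref{lem:counting}). The key balance: there are at most $r^{5\ell n}$ such graphs and each contains $\ge\ell(n/(2r-2))^{r-2}$ copies of $K_r$, so the union-bound term is $r^{5\ell n}(1-p)^{\ell(n/(2r-2))^{r-2}}\le e^{-\ell n}$ for $C$ large. This parameterisation is what lets the union bound over graphs close --- graphs very near Tur\'an graphs are few, and that scarcity is exactly calibrated against their clique count. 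Your plan has no analogue of this counting step, and also relies on raw stability rather than the paper's $\eps$-close partition (Definition~\ref{def:partition}), which crucially includes a per-vertex degree condition that raw stability does not give you --- that condition is what guarantees every non-crossing edge has a large common neighbourhood, bypassing the need for the averaging argument in the first place. Your $r=3$, $p>1/2$ sketch is closer to working (there $p$ is a constant so $(\star)$-style bounds are easy), but even there you do not address what happens when the stable structure has a mixing vertex, the case Corollary~\ref{cor:3structure}\ref{cor:3structure:bip} is designed for.
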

This theorem is proven in Section~\ref{sec:random:exact}. As a side remark
we mention that, for its proof we
shall need a structural lemma (Lemma~\ref{lem:structure}) which classifies graphs
with at least $\tur{r}{n}$ edges and has the following direct consequence
which might be of independent interest.
\begin{lemma}
\label{lem:books}
  For every integer $r\ge 3$ and real $\tilde{\eps}>0$ there exists
  $\delta>0$
  such that for all $n$-vertex graphs~$G$ with
  $e(G)>\tur{r}{n}$ one of the the following is true.
  \begin{enumerate}[label=\rom]
    \item Some vertex in~$G$ is contained in at least $\delta n^{r-1}$ copies
      of~$K_r$.
    \item Some edge in~$G$ is contained in at least
    $(1-\tilde{\eps})(n/(r-1))^{r-2}$ copies of~$K_r$.
  \end{enumerate}  
\end{lemma}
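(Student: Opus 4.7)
The plan is to invoke the structural Lemma~\ref{lem:structure} as a black box. This lemma should deliver the following dichotomy for an $n$-vertex graph $G$ with $e(G)>\tur{r}{n}$: either $G$ contains $\Omega_r(n^r)$ copies of $K_r$ (call this the \emph{non-extremal regime}), or $G$ admits a near-balanced $(r-1)$-partition $V_1\dcup\dotsb\dcup V_{r-1}$ with at most $\eta n^2$ within-part edges and at most $\eta n^2$ cross non-edges, where the closeness parameter $\eta$ can be taken arbitrarily small (the \emph{extremal regime}).

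In the non-extremal regime, conclusion~(i) falls out by averaging: if $G$ contains at least $cn^r$ copies of $K_r$, summing over the $n$ vertices shows that some vertex lies in at least $rcn^{r-1}$ of these copies, and we set $\delta\le rc$.

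In the extremal regime, write $X$ for the number of within-part edges and $Y$ for the number of cross non-edges. Comparing $e(G)$ with the edge count $e(G_0)\le \tur{r}{n}$ of the complete $(r-1)$-partite graph $G_0$ on the same partition gives $X\ge Y+1$, so a within-part edge exists. The goal is then to find such an edge $uv\in\binom{V_i}{2}$ whose endpoints are \emph{typical}, meaning each has at most $\tilde{\eps} n/(8r)$ non-neighbours outside its own part. For such an edge the number of $K_r$-copies containing $uv$ is at least
\[
\prod_{j\ne i}\bigl(|V_j|-D^c_j(u)-D^c_j(v)\bigr)\;-\;(r-2)^2\,Y\cdot\bigl(\tfrac{n}{r-1}\bigr)^{r-4},
\]
where $D^c_j(w)$ denotes the number of non-neighbours of $w$ in $V_j$ and the subtracted term discounts $(r-2)$-tuples that pick up some internal non-edge. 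With $|V_j|=(1+o(1))n/(r-1)$ and $\eta$ chosen small relative to $\tilde{\eps}/r$, both error contributions are $o(1)\cdot (n/(r-1))^{r-2}$, so this count exceeds $(1-\tilde{\eps})(n/(r-1))^{r-2}$, giving~(ii).

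The principal obstacle is guaranteeing the existence of a typical within-part edge. The identity $\sum_u D^c(u)=2Y\le 2\eta n^2$ limits the atypical vertex set $B$ to $|B|\le 16r\eta n/\tilde{\eps}$, and when $Y=0$ (which covers the case $X=1$, for example) every vertex is typical and any within-part edge works. For larger $X$ one should pass to the partition minimising within-part edges; this yields $d_{V_i}(u)\le d_{V_j}(u)$ for each $u\in V_i$ and every $j\ne i$, which in turn bounds the within-part degree of each atypical vertex and prevents $B$ from monopolising the within-part edges. Combining this structural constraint with the pigeonhole inequality $X>Y$ allows one to locate a within-part edge disjoint from $B$. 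Calibrating $\eta$ precisely in terms of $\tilde{\eps}$ and $r$ so that this avoidance step succeeds is where the technical work of the proof is concentrated.
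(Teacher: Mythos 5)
Your first case (averaging to find a vertex in many $K_r$'s) and your final count (common-neighbourhood product for a within-part edge with typical endpoints) are both fine, but the middle of the argument has a gap, and the gap stems from misstating Lemma~\ref{lem:structure}. That lemma does not deliver the aggregate bounds you describe (at most $\eta n^2$ within-part edges and at most $\eta n^2$ missing cross edges --- that would essentially be the stability theorem, Theorem~\ref{thm:stability}). What it delivers is an \emph{$\eps$-close $(r-1)$-partition} $V_0\dcup V_1\dcup\cdots\dcup V_{r-1}$ in the sense of Definition~\ref{def:partition}: besides $|V_0|\le\eps^2 n$ and $|V_i|\ge(1-\eps)n/(r-1)$, it imposes a \emph{per-vertex} degree condition, namely $\deg(v,V_j)\ge(1-\eps)|V_j|$ for every $v\in V_i$ and every $j\ne i$ with $i,j\in[r-1]$. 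In your terminology, \emph{every} vertex of $V_1\cup\cdots\cup V_{r-1}$ is already typical, so the ``principal obstacle'' you identify --- locating a within-part edge whose endpoints are both typical --- does not arise. Any within-part edge inside $V_1\cup\cdots\cup V_{r-1}$ works, and such an edge exists because the degree cap on $V_0$ together with $e(G)>\tur{r}{n}$ forces $e(G-V_0)>\tur{r}{n-|V_0|}$, while the crossing edges alone form an $(r-1)$-partite graph. Setting $\eps:=\tilde\eps/r^2$ and estimating $\prod_{j\ne i}(1-j\eps)|V_j|\ge(1-r\eps)^{r-2}(n/(r-1))^{r-2}\ge(1-\tilde\eps)(n/(r-1))^{r-2}$ finishes; this is precisely the paper's proof.

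With the weaker black box you posit, the argument as sketched does not close. Minimising within-part edges gives $d_{V_i}(u)\le d_{V_j}(u)$, but this does not prevent a single atypical vertex $u\in V_1$ from carrying \emph{all} the within-part edges: take $u$ with $D^c(u)=Y$ cross non-edges and $Y+1$ within-$V_1$ neighbours, all other pairs as in $\Tur{r}{n}$. Then $X=Y+1$, every within-part edge contains $u$, and moving $u$ (or any of the other vertices involved) only increases the within-part count, so the partition is already minimising; yet if $Y$ is a small constant times $n$ (still compatible with $Y\le\eta n^2$ once $n$ is large), then $u$ is atypical and your selection step has no typical within-part edge to choose. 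In that configuration one must instead show that $u$ itself lies in $\delta n^{r-1}$ copies of $K_r$ --- which is exactly the case analysis the proof of Lemma~\ref{lem:structure} already performs (a vertex of $V_0$ with too-high degree, or a vertex qualifying for more than one $V_i$). So your plan is salvageable, but only by importing the degree condition the lemma actually supplies, or by redoing a nontrivial portion of its proof by hand.
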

An edge contained in $b$ triangles is sometimes called a \emph{book of size
$b$}.
Lemma~\ref{lem:books} in the case $r=3$ thus states that if $e(G)>\tur{3}{n}$
and no vertex of $G$ is contained in many $K_3$-copies, then $G$ contains a book
of size almost $\tfrac{n}{2}$. We remark that Mubayi~\cite{Mub10} recently
showed that for every $\alpha\in(\tfrac{1}{2},1)$, if $G$ has $e(G)>\tur{3}{n}$
and less than $\alpha(1-\alpha)n^2/4-o(n^2)$ triangles, then $G$ contains a book
of size at least $\alpha n/2$. This result is harder, but does not imply
Lemma~\ref{lem:books}.

\smallskip

Finally, it follows from Friedgut's celebrated
result~\cite{Friedgut99:Sharp} that the property of being Tur\'annical
considered in Theorem~\ref{thm:random:exact} has a sharp threshold. This is
detailed in Section~\ref{sec:SharpThresholds}.

\subsection{Sparse restrictions for sparse random graphs}

In the previous subsection we examined the effect of random restrictions on
Tur\'an's theorem. A version of Tur\'an's theorem for the Erd\H{o}s-R\'enyi random graph
$\Gnq$ was recently proved by Schacht~\cite{Schacht:KLR}, and independently by Conlon
and Gowers~\cite{ConGow:KLR}. To understand this theorem, one should view
Tur\'an's theorem as the statement that the fraction of the edges one must delete
from the complete graph $K_n$ to remove all copies of $K_r$ is
approximately $\frac{1}{r-1}$. One can replace $K_n$ with any graph $G$, and
ask which graphs $G$ have the property that deletion of a fraction of
approximately $\frac{1}{r-1}$ of the edges is necessary to remove all copies of $K_r$.

\begin{theorem}[Schacht~\cite{Schacht:KLR}, Conlon \& Gowers~\cite{ConGow:KLR}]
\label{thm:rr:mathias1} Given
  $\eps>0$ and $r$ there exists a constant $C$ such that the following is true.
  For $q\geq Cn^{-2/(r+1)}$, a.a.s.\ $G=\Gnq$ has the property that every
  subgraph of $G$ with at least $(1+\eps)\frac{r-2}{r-1}e(G)$ edges contains a
  copy of $K_r$.
\end{theorem}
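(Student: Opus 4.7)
I would follow the deletion/transference strategy introduced by Schacht, which reduces the random-graph Tur\'an theorem to a quantitative supersaturation statement for dense graphs. The starting point is the Erd\H{o}s--Simonovits supersaturation theorem: for every $\eps>0$ there exists $\xi=\xi(r,\eps)>0$ such that every $n$-vertex graph $H$ with $e(H)\ge(1+\tfrac{\eps}{2})\tur{r}{n}$ in fact contains at least $\xi n^r$ copies of $K_r$. Thus in the dense setting one does not merely detect a single $K_r$ but a positive density of them, and the aim will be to transfer this to sparse random graphs.

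The threshold $q\ge Cn^{-2/(r+1)}$ is exactly $q\ge Cn^{-1/m_2(K_r)}$, where $m_2(K_r)=\frac{\binom{r}{2}-1}{r-2}=\frac{r+1}{2}$ since every $K_s\subset K_r$ yields ratio $\frac{s+1}{2}$. A short first-moment computation shows that below this threshold one can delete a $o(1)$-fraction of edges of $\Gnq$ to destroy every $K_r$, so the theorem cannot hold there; this is exactly the obstruction matching the stated threshold. The substantial work lies entirely in the $1$-statement.

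For the $1$-statement I would aim for the stronger supersaturation version: a.a.s.\ every subgraph $H\subset\Gnq$ with $e(H)\ge(1+\eps)\tfrac{r-2}{r-1}e(\Gnq)$ contains at least $\xi' q^{\binom{r}{2}}n^r$ copies of $K_r$. The conclusion in Theorem~\ref{thm:rr:mathias1} follows trivially. To prove the stronger form I would use Schacht's multi-round exposure: split the probability as $1-q=(1-q_1)(1-q_2)$ with $q_1,q_2\approx q/2$, expose $G_1=G(n,q_1)$ first, and condition on it. By an inductive hypothesis (on either $r$ or on a suitable monotone parameter), $G_1$ a.a.s.\ has the property that every sufficiently dense subgraph contains many copies of $K_{r-1}$. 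Each such $K_{r-1}$ is then extended by $G_2$ to a $K_r$ with positive probability independently, and a concentration / union-bound argument shows that the subgraph $H$ cannot avoid all these completions unless it is very sparse on the corresponding common-neighbourhood sets, which is ruled out by the density assumption on $H$. The $2$-density $m_2(K_r)$ governs exactly how small one can take $q$ while still controlling this union bound over all candidate $H$'s.

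\textbf{Main obstacle.} The decisive difficulty is the sparse regime. A naive first-moment/deletion union bound only reaches the threshold $n^{-2/(r-1)}$ and has no chance of achieving $n^{-2/(r+1)}$. The improvement requires either (a)~Schacht's transference machinery, which carefully propagates supersaturation across the exposure rounds and crucially uses that \emph{many} copies of $K_r$ (not just one) are forced in each dense configuration, or (b)~the hypergraph container theorem applied to the $\binom{r}{2}$-uniform hypergraph whose vertices are $\binom{[n]}{2}$ and whose hyperedges are the edge sets of $K_r$-copies; the degree conditions for containers translate precisely into the $m_2(K_r)$ parameter. Either route is technically delicate, and in my plan the hardest step is obtaining the quantitative supersaturation needed to make the exposure step tight enough to reach $q\sim n^{-2/(r+1)}$ rather than merely the expectation threshold.
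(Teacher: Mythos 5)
This theorem is not proved in the paper: it is stated as an external result of Schacht and of Conlon--Gowers, and the paper only imports (a tailored version of) Schacht's transference theorem as Theorem~\ref{thm:rr:mathias2} in Section~\ref{sec:random-random} to prove Theorem~\ref{thm:TurForGnp}. So there is no in-paper proof of Theorem~\ref{thm:rr:mathias1} for your sketch to be compared against.

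Taken on its own terms, your plan correctly identifies the shape of Schacht's argument: start from Erd\H os--Simonovits supersaturation, note that the exponent $2/(r+1)$ is $1/m_2(K_r)$ (your computation $m_2(K_r)=\tfrac{\binom{r}{2}-1}{r-2}=\tfrac{r+1}{2}$ is right), and then use a multi-round exposure together with a transference/induction to push the union bound down to $q\sim n^{-1/m_2(K_r)}$. You also correctly flag the container method as an alternative route and correctly locate the main obstacle in the sparse union bound. Two remarks. First, the induction in Schacht's actual argument is not on $r$ or on the $K_{r-1}$ subproblem, but on the number $i$ of edge-rounds from $1$ up to $\binom{r}{2}$, with a boundedness condition (essentially the codegree control formalised in this paper's Definition~\ref{def:bounded}) controlling how supersaturation propagates between rounds; the form you describe (``$G_1$ contains many $K_{r-1}$'s, extend by $G_2$'') is closer to what works for the weaker exponent and will not by itself close the gap to $2/(r+1)$. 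Second, the phrase ``deletion/transference'' conflates two distinct methods: the classical deletion method indeed stalls well short of $n^{-2/(r+1)}$, as you note, and is not part of Schacht's proof. These are gaps of precision rather than of conception; the overall route you propose is the one taken in the cited references, and filling it in amounts to reproducing the proof of \cite[Theorem~3.3]{Schacht:KLR} together with the verification (analogous to Claim~\ref{cl:TurForGnp} in this paper) that the $K_r$-hypergraph sequence is suitably dense and $(K,\mathbf{q})$-bounded.
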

Prior to the recent breakthroughs~\cite{Schacht:KLR} and~\cite{ConGow:KLR},
Theorem~\ref{thm:rr:mathias1} was known for $r=3,4,5$
(see~\cite{FrRo:K3K486,KLR,GeSchSte:K5Free}, respectively). We remark that
this is also closely related to the more general line of research
concerning the local and global resilience of graphs, which recently
received increased attention, after the work of Sudakov and
Vu~\cite{SudVuResil}.

Theorem~\ref{thm:rr:mathias1} is best possible in the sense that it ceases
to be true for values of~$q$ growing more slowly than
$n^{-2/(r+1)}$. Moreover, $\eps$ cannot be replaced by~$0$.

Again, the restriction set in Theorem~\ref{thm:rr:mathias1} is the complete
$r$-partite hypergraph (sequence).  So, extending
Theorem~\ref{thm:random:approx}, we would like to analyse what happens when this
is replaced by a sparser set of random restrictions and investigate the influence
of the two independent probability parameters (coming from the random
restrictions and the random graph) on each other. Thus, we will be dealing with
two random objects: namely a random $r$-uniform hypergraph $\cR^{(r)}(n,p)$ and a
random graph $\Gnq$, picked at the same time. Furthermore, since we wish to prove
asymptotically almost sure results, we need to refer not to single $n$-vertex
hypergraphs but to sequences of hypergraphs and graphs.

Before we can formulate our result, we first need to generalise the
concept of being Tur\'annical or approximately Tur\'annical from (copies
of~$K_r$ in) the complete graph~$K_n$ to arbitrary graphs~$G$.  Observe
that, in Theorem~\ref{thm:rr:mathias1} we are interested in graphs~$G$ for
which any subgraph with at least $(1+\eps)\frac{r-2}{r-1}\cdot e(G)$ edges
contains a copy of $K_r$. Hence it is natural to say that the $r$-uniform
hypergraph $\cF$ is $\eps$-Tur\'annical for $G$ when $\cF$ detects every
such subgraph.

For finding a similarly suitable definition of Tur\'annical hypergraphs
for~$G$ we need some additional observations. Recall that $\eps$ cannot
be~$0$ in Theorem~\ref{thm:rr:mathias1}. In other words an \emph{exact}
version of Tur\'an's theorem for random graphs cannot be expressed in terms
of the number of its edges. Instead it has to utilise the structure
provided by Tur\'an's theorem: the maximal $K_r$-free subgraph of $G=\Gnq$
should have exactly as many edges as the biggest $(r-1)$-partite subgraph
of~$G$.  Accordingly, we will call a hypergraph Tur\'annical for~$G$ if it
detects all subgraphs with more edges.
The following definition summarises this.

\begin{definition}[Tur\'annical for $G$]\label{def:Tur:G}
Let $r\geq 3$ be an integer, $G$ an $n$-vertex graph, and $\cF$ an $r$-uniform
hypergraph on the same vertex set.
Then we call $\cF$ \emph{exactly
Tur\'annical for $G$} when the following holds. Every subgraph of $G$ with more
edges than are contained in a maximum $(r-1)$-partition of $G$ has a copy of
$K_r$ induced by an edge of $\cF$.
We say that $\cF$ is \emph{$\eps$-approximately Tur\'annical for $G$}, or simply
\emph{$\eps$-Tur\'annical for $G$}, if every subgraph of $G$ with more than
$(1+\eps)\frac{r-2}{r-1}e(G)$ edges has a copy of $K_r$ induced by an edge of
$\cF$.
\end{definition}

 In this language, Theorem~\ref{thm:rr:mathias1} becomes the statement that,
 given $r$ and $\eps>0$, there exists $C$ such that the complete $r$-uniform
 hypergraph is a.a.s.\ $\eps$-Tur\'annical for $\Gnq$, whenever $q\geq
 Cn^{-2/(r+1)}$. Moreover, according to a result of Brightwell, Panagiotou
 and Steger~\cite{BriPanSte}, for every $r$ there exists $\mu>0$ such that the complete
 $r$-uniform hypergraph is a.a.s.\ exactly Tur\'annical for $\Gnq$ whenever
 $q>n^{-\mu}$.\footnote{However,  Brightwell, Panagiotou
 and Steger do not believe that their result is best possible: for example, 
 for $r=3$ their proof works for $\mu=1/250$, but they suggest the result might
 hold for any $\mu<1/2$.}

In our last theorem we determine the relationship between $r$,
$\eps>0$, $p$ and $q$ such that the random $r$-uniform hypergraph
$\cR^{(r)}(n,p)$ is a.a.s.\ $\eps$-Tur\'annical for $\Gnq$. Not surprisingly,
a suitable combination of the two threshold probabilities from
Theorem~\ref{thm:random:approx} and Theorem~\ref{thm:rr:mathias1}
determines the threshold in this case.

\begin{theorem}\label{thm:TurForGnp} Given $r\in\mathbb N$, $r\ge 3$ and
$\eps\in(0,1/(r-2))$, there exist $c=c(r,\eps)>0$ and
  $C=C(r,\eps)>0$ such that for any pair of sequences $p=p_n$ 
  and $q=q_n$ of probabilities and for
  $\vartheta_q(n):=(nq^{(r+1)/2})^{2-r}$ we have
  \begin{multline*}
    \lim_{n\to\infty}
    \Prob\big( \text{$\cR^{(r)}(n,p)$ is $\eps$-Tur\'annical for
    $\Gnq$}\,\big) \\
    =
    \begin{cases}
      0, & \text{if $p_n\le c \vartheta_q(n)$ for all $n\in\NATS$}, \\ 
      1, & \text{if $p_n\ge C \vartheta_q(n)$ for all $n\in\NATS$}.
   \end{cases}
  \end{multline*}
\end{theorem}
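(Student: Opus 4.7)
I treat the two directions separately, around the key identity
\begin{equation*}
  \vartheta_q(n)\cdot n^r q^{\binom{r}{2}}
  =\bigl(nq^{(r+1)/2}\bigr)^{2-r}\cdot n^r q^{r(r-1)/2}=n^2q,
\end{equation*}
which says that the expected number of $K_r$-copies of $\Gnq$ that also lie in $\cR^{(r)}(n,p)$ has order $n^2q\cdot p/\vartheta_q(n)$, i.e.\ the same order as $e(\Gnq)$ precisely when $p$ has order $\vartheta_q(n)$. This is the arithmetic reason the threshold takes this form and also the reason $\eps<1/(r-2)$ is natural: it ensures that the maximum $(r-1)$-partite subgraph is strictly smaller than all of~$\Gnq$.

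\textbf{The $0$-statement.} Fix $p\le c\vartheta_q(n)$ and condition on $G=\Gnq$. A.a.s.\ $G$ has $(1+o(1))\binom{n}{2}q$ edges and at most $2\binom{n}{r}q^{\binom{r}{2}}$ copies of~$K_r$ (the second bound trivialises when $G$ contains no $K_r$ at all). The number $X$ of $K_r$-copies of $G$ that additionally lie in $\cR^{(r)}(n,p)$ is stochastically dominated by $\Bin\bigl(2\binom{n}{r}q^{\binom{r}{2}},p\bigr)$, whose mean is at most $\tfrac{2c}{r!}n^2q$ by the identity above; a Chernoff bound gives $X\le c'n^2q$ a.a.s.\ with $c'\to 0$ as $c\to 0$. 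Deleting one edge from each detected copy produces $H\subseteq G$ containing no $K_r$ induced by a hyperedge of $\cR^{(r)}(n,p)$ and satisfying $e(H)\ge e(G)-c'n^2q$. Because $\eps<1/(r-2)$, the quantity $\alpha:=(1-\eps(r-2))/(r-1)$ is positive, so
\begin{equation*}
  e(G)-(1+\eps)\tfrac{r-2}{r-1}e(G)=\alpha\,e(G)
\end{equation*}
is a positive fraction of $n^2q$; choosing $c$ small forces $c'n^2q<\alpha\,e(G)$, so $H$ witnesses that $\cR^{(r)}(n,p)$ is not $\eps$-Tur\'annical for~$G$.

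\textbf{The $1$-statement.} Fix $p\ge C\vartheta_q(n)$. We may assume $q=q_n$ lies above the threshold $n^{-2/(r+1)}$ of Theorem~\ref{thm:rr:mathias1} with parameter $\eps/2$, since for smaller~$q$ a single subgraph with more than $(1+\eps)\tfrac{r-2}{r-1}e(G)$ edges already contains a $K_r$-copy easily detected by $\cR$ (alternatively, the statement becomes vacuous through the size of~$e(G)$). The key input is a supersaturation strengthening of Theorem~\ref{thm:rr:mathias1}, implicit in Schacht's transference machinery~\cite{Schacht:KLR}: there exists $\gamma=\gamma(r,\eps)>0$ such that a.a.s.\ every subgraph $H\subseteq G=\Gnq$ with $e(H)>(1+\eps)\tfrac{r-2}{r-1}e(G)$ contains at least $\gamma n^r q^{\binom{r}{2}}$ copies of~$K_r$. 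Conditioning on this event and on $e(G)\le n^2q$, the probability over $\cR=\cR^{(r)}(n,p)$ that $H$ contains no $K_r$-copy lying in $\cR$ is at most $\exp(-p\gamma n^r q^{\binom{r}{2}})=\exp(-C\gamma n^2q)$. Union bounding over the at most $2^{e(G)}\le 2^{n^2q}$ subgraphs $H$ of $G$ gives
\begin{equation*}
   \Prob\bigl[\,\cR \text{ is not $\eps$-Tur\'annical for }G\,\bigr]
   \le\exp\bigl((\log2-C\gamma)\,n^2q\bigr)=o(1)
\end{equation*}
whenever $C>\log2/\gamma$, completing the $1$-statement.

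\textbf{Main obstacle.} The technical heart is the supersaturation version of Theorem~\ref{thm:rr:mathias1}: the single-copy version is not enough because there are $2^{\Theta(n^2q)}$ candidate subgraphs but only $\Theta(n^rq^{\binom{r}{2}})$ candidate $K_r$-copies in~$G$, so the union bound requires $\Omega(n^rq^{\binom{r}{2}})$ copies per dense subgraph. Extracting this count can be done either by tracking the constants through Schacht's transference proof, or, equivalently, by invoking a sparse $K_r$-removal lemma for~$\Gnq$ to convert any dense $K_r$-poor subgraph into a dense $K_r$-free subgraph, contradicting Theorem~\ref{thm:rr:mathias1}.
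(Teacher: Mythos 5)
Your $0$-statement follows the paper's argument almost verbatim: expose $G=\Gnq$ first, bound the number of $K_r$-copies of $G$ selected by $\cR^{(r)}(n,p)$ using the identity $\vartheta_q(n)\cdot n^r q^{\binom{r}{2}}=n^2q$, and delete one edge from each detected copy; this is exactly the computation around~\eqref{eq:TurForGnp:EY}, so no issues there.

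The $1$-statement is where you genuinely diverge, and where there is a gap. You want to first establish a supersaturation strengthening of Theorem~\ref{thm:rr:mathias1} --- that a.a.s.\ every $H\subset\Gnq$ with $e(H)>(1+\eps)\frac{r-2}{r-1}e(G)$ contains $\Omega\bigl(n^rq^{\binom{r}{2}}\bigr)$ copies of $K_r$ --- and then union-bound over the at most $2^{n^2q}$ subgraphs of $G$, in the spirit of the proof of Theorem~\ref{thm:random:approx}. Your arithmetic is correct: $p\ge C\vartheta_q(n)$ gives $p\,n^rq^{\binom{r}{2}}=Cn^2q$, which beats the $n^2q\log 2$ cost of the union bound once $C$ is large. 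The trouble is the supersaturation input itself. At the threshold density $q\sim n^{-2/(r+1)}$ this is essentially a sparse $K_r$-removal lemma for $\Gnq$, and you only assert that it is ``implicit'' in Schacht's machinery. The paper makes a point of \emph{not} needing this: it invokes supersaturation (Kohayakawa--R\"odl--Schacht, Theorem~\ref{thm:KoSchRoOLDTURAN}) only when $\liminf q_n>0$, precisely the range where that result is readily available, and for $q_n=o(1)$ it instead feeds the random hypergraph $\cR^{(r)}(n,p_n)$ directly into Schacht's transference theorem as the restriction hypergraph (Corollary~\ref{cor:rr}), after verifying the $\bigl(\frac{r-2}{r-1},\eps',\zeta\bigr)$-denseness and $(K',\mathbf{q}')$-boundedness conditions in Claim~\ref{cl:TurForGnp}. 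That verification only needs the dense Erd\H{o}s--Simonovits supersaturation (Theorem~\ref{thm:TuranDensity}) and second-moment bookkeeping, not the sparse analogue. So your route buys conceptual unity between the dense and sparse $q$ regimes and a cleaner union-bound endgame, but it does not remove the technical weight --- it relocates it into the asserted supersaturation lemma, which is at least as hard to extract from Schacht's or Conlon--Gowers's work as the boundedness/denseness verification the paper carries out, and which you would need to actually prove (for instance via a sparse removal lemma) to make the argument complete.
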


This theorem states that for a fixed~$q_n$ the threshold
probability for $\cR^{(r)}(n,p)$ to be $\eps$-Tur\'annical for $\Gnq$
is~$\vartheta_q(n)$.  Equivalently, if instead we fix the hyperedge
probability~$p_n$ then $\vartheta_p(n):=(np^{1/(r-2)})^{-2/(r+1)}$ is the
threshold probability for $\Gnq$ such that $\cR^{(r)}(n,p)$ is
$\eps$-Tur\'annical for $\Gnq$. In particular, $\vartheta_q(n)$ is constant
when~$q_n$ is the threshold probability from
Theorem~\ref{thm:random:approx} and $\vartheta_p(n)$ is constant when $p_n$ is the
threshold probability from Theorem~\ref{thm:rr:mathias1}.

We note that the requirement $\eps<1/(r-2)$ in Theorem~\ref{thm:TurForGnp} is
necessary for the 0-statement. Indeed, if $\eps>1/(r-2)$ then
$(1+\eps)\frac{r-2}{r-1}e(G)>e(G)$. Therefore the premise in
Definition~\ref{def:Tur:G} is never met, and consequently every hypergraph is
$\eps$-Tur\'annical.

In order to establish Theorem~\ref{thm:TurForGnp} we employ in
Section~\ref{sec:random-random} Schacht's machinery from~\cite{Schacht:KLR}.
However we need to modify this machinery to allow working with two sources of
randomness: graphs $\Gnq$ and hypergraphs $\cR^{(r)}(n,p)$. We believe that this
might prove useful in the future. 


We believe that a similar result as Theorem~\ref{thm:TurForGnp} should be true if $\eps$-Tur\'annical is
replaced by exactly Tur\'annical in this theorem. More precisely, we think that
for $r\geq 3$ the hypergraph $\cR^{(r)}(n,p)$ is a.a.s.\ exactly Tur\'annical for
$\Gnq$, if $p$ and $q$ are both sufficiently large. For obtaining a
result of this type, possibly a modification of the methods used
in~\cite{BriPanSte} may be of assistance.

\section{Deterministic constructions}
\label{sec:det}

In this section we provide the proofs for Theorem~\ref{thm:TuranExt} and
Proposition~\ref{prop:density}. 
We start with the latter.

Let $\cF=(V,\cE)$ be an $r$-uniform hypergraph and~$X$ be a subset of its
vertices of size $|X|=s<r$. The \emph{link hypergraph}
$\link_\cF(X)=(V,\cE')$ of $X$ is the $(r-s)$-uniform hypergraph
with hyperedges $\cE'=\{Y\in\binom V{r-s}\colon Y\cup X\in\cE\}$. If
$X=\{x_1,\dots,x_s\}$ we also write $\link_\cF(x_1,\dots,x_s)$
for $\link_\cF(X)$. When the underlying hypergraph $\cF$ is
clear from the context we write $\link(X)$ instead of
$\link_\cF(X)$.

\begin{proof}[Proof of Proposition~\ref{prop:density}]
  Let the $r$-uniform hypergraph $\cF=([n],\cE)$ be given. 
  We start with the proof of~\ref{prop:density:exact} and first consider the case $r>3$.
  We have
  \begin{equation*}
    \sum_{\{u,v\}\in\binom{[n]}{2}} e\big(\link(u,v)\big)
    =\binom{r}{2}|\cE|
    <\binom{r}{2}\frac{n(n-1)(n-2)}{r(r-1)^2(r-2)}
    \le\frac{\binom{n}{2}n}{(r-2)(r-1)}\,,
  \end{equation*}
  Accordingly there are two vertices $u,v\in[n]$ such that
  $(r-2)e\big(\link(u,v)\big)\le n/(r-1)$.  Let
  \begin{equation*}
    L:=\Big\{ w\in[n]\colon w\in Y 
      \text{ for some } Y\in E\big(\link(u,v)\big) \Big\}
  \end{equation*}
  be the set of vertices covered by the hyperedges of $\link(u,v)$. Because
  $\link(u,v)$ is an $(r-2)$-uniform hypergraph, it follows from the
  choice of~$u$ and~$v$ that $|L|\le n/(r-1)$. 
  Now suppose the graph $G=([n],E)$ is a copy of  the $(r-1)$-partite Tur\'an
  graph $\Tur{r}{n}$ such that ~$u$ and~$v$ are in the same partition
  class of $\Tur{r}{n}$ and $L$ is entirely contained in another 
  partition class. The graph $G$ exists because some partition
  class of $\Tur{r}{n}$ has at least $n/(r-1)$ vertices, and at least two partition classes of
  $\Tur{n}{r}$ have at least two vertices (unless $n\le r$, in which
  case~$L=\emptyset$).   
  As $r>3$, we can add the edge $uv$ to~$G$ without
  creating a copy of~$K_r$ on any hyperedge of~$\cF$.
  Therefore~$G+uv$
  witnesses that~$\cF$ is not Tur\'annical.
  
  For the case $r=3$ of~\ref{prop:density:exact} we proceed similarly and infer
  from $|\cE|<\frac12\binom{n}{3}$ that there are distinct vertices
  $u,v\in[n]$ with $e\big(\link(u,v)\big)<\frac{n}{2}-1$ (observe that the hyperedges
  in $\link(u,v)$ are singletons). Accordingly we can place the vertices $u,v$
  together with $E\big(\link(u,v)\big)$ into one partition class of
  the bipartite graph $\Tur{3}{n}$ and subsequently add the edge
  $uv$. $\cF$ does not detect~$G$, even thought
  $e(G)=\tur{3}{n}+1$.

  For~\ref{prop:density:approx} an even simpler construction for
  $G=([n],E)$ suffices. We start with the complete graph $K_n=:G$. Then,
  for each hyperedge $Y$ of $\cF$ we pick two arbitrary vertices $u,v\in Y$
  and delete the edge $uv$ from $G$ (if it is still present).
  Using $|\cE|\le(1-r\eps)\frac1{4r}n^2$ and $r\ge3$, $n\ge5$, it
  is easy to check that the resulting graph~$G$ has more than
  $(1+\eps)\tur{r}{n}$ edges, and by construction~$G$ contains no
  copies of $K_r$ on hyperedges of~$\cF$. Hence~$\cF$ is not
  $\eps$-Tur\'annical.
\end{proof}

Now we turn to the proof of Theorem~\ref{thm:TuranExt}, which 
provides an upper bound on the number of edges
in a graph on $n$ vertices with the property that no $r$-clique intersects
a fixed set~$M$ of~$m$ vertices.  Theorem~\ref{thm:TuranExt} states that the following
graphs~$\Turm{r}{n}{m}$ are extremal for this problem.  For $n\le(r-1)m$
let $\Turm{r}{n}{m}=\Tur{r}{n}$ be a Tur\'an graph on~$n$ vertices.  For
$n>(r-1)m$ we construct $T=\Turm{r}{n}{m}$ as follows. Initially, we take
$T=\Tur{r}{(r-1)m}$.  We then fix an arbitrary set $M\subset V(T)$ of size
$m$ and add $n-(r-1)m$ new vertices to~$T$.  Finally, for each of the new
vertices we add edges to all other vertices except those in~$M$.  By
construction, it is clear that $\Turm{r}{n}{m}$ has~$n$ vertices and no
copy of $K_r$ intersects $M$. Moreover, observe that the number of edges of
$\Turm{r}{n}{m}$ is given by the function $\turm{r}{n}{m}$ defined
in~\eqref{eq:turm} since
\begin{equation*}
    m^2\tbinom{r-1}{2} + m(r-2)\big(n-(r-1)m\big)
    + \tbinom{n-(r-1)m}{2} =
    \binom{n}{2} - nm + (r-1)\binom{m+1}{2} \,.
\end{equation*}

We shall use the following notation. Let~$G$ be a graph, $X$ and~$Y$ be disjoint
subsets
of its vertices, and~$u$ be a vertex. Then we write $G[X]$ for the subgraph
of~$G$ induced by~$X$ and $G[X,Y]$ for the bipartite subgraph of~$G$ on vertex
set $X\cup Y$ which contains exactly those edges of~$G$ which run between~$X$
and~$Y$. Moreover, we write $\neighbor(u,X)$ for the set of neighbours of~$u$
in~$X$, and set $\deg(u,X):=|\neighbor(u,X)|$.

\begin{proof}[Proof of Theorem~\ref{thm:TuranExt}]
  Let~$r$, $n$, $m$ be fixed and let~$G$ and~$M$ satisfy the conditions of the
  theorem. Assume moreover, that $G$ has a maximum number of edges, subject to
  these conditions. The definition of $\turm{r}{n}{m}$ suggests the following
  case distinction.  We shall first proof the theorem for $n\leq (r-1)m$ and
  then for $n>(r-1)m$. In fact, for the second case we use the correctness of
  the first case.

  \smallskip

  \noindent{\sl{First assume $n\leq (r-1)m$.}} In this case we start
  by iteratively finding vertex disjoint cliques $Q_1,\dots,Q_k$ with
  at least $r$ vertices in~$G$ as follows.  Assume, that
  $Q_1,\dots,Q_{i-1}$ have already been defined for some~$i$. Then let
  $Q_i$ be an arbitrary maximum clique on at least~$r$ vertices in
  $G-\bigcup_{j<i}Q_j$. If no such clique exists, then set $k=i-1$
  and terminate.

  Now, let us establish some simple bounds on the number of edges
  between these cliques and the rest of~$G$. For this purpose, set
  $q_i:=v(Q_i)\ge r$ to be the number of vertices of the clique $Q_i$ for all $i\in[k]$ and $q:=\sum_{i=1}^k q_i$. Clearly,
  the graph $G-\bigcup_{i=1}^k V(Q_i)$ is $K_r$-free, and therefore
  \begin{equation*}
    e\Big(G-\bigcup_{i=1}^k V(Q_i)\Big)\le\tur{r}{n-q}\,.
  \end{equation*}
  Moreover, $M\subset V(G)\setminus \bigcup_{i=1}^k V(Q_i)$ and we
  have $\deg(v,Q_i)\le r-2$ for each $v\in M$, as $v$ is not contained
  in a copy of $K_r$ by assumption. In addition, the maximality of $Q_1,\ldots,Q_k$
  implies that $\deg(v,Q_i)\le q_i-1$ for any
  $v\in V(G)\setminus (M\cup\bigcup_{j=1}^{i}V(Q_i))$. Putting these three 
  estimates together we obtain
  \begin{equation}
  \label{eq:TuranExt:g}
  \begin{split}
    e(G) \le
    \sum_{i=1}^k\binom{q_i}{2} 
    &+
    \sum_{1\le i<j\le k} (q_i-1)q_j +
    \tur{r}{n-q}+mk(r-2) \\
    & +(q-k)(n-m-q)
    =:g(q_1,\ldots,q_k)\,.
  \end{split}
  \end{equation}
  Observe that~\eqref{eq:TuranExt:g} defines a function
  $g(q_1,\dots,q_\ell)$ for each number of arguments~$\ell$.  In
  particular, we also allow $\ell=0$, in which
  case~\eqref{eq:TuranExt:g} asserts that $g()=\tur{r}{n}$.  In the
  remainder of this case of the proof we shall investigate the family
  of functions $g(q_1,\dots,q_\ell)$. We shall show, that for all
  $\ell>0$ we have $g()>g(q_1,\dots,q_\ell)$, which is a consequence
  of the following claim.
  \begin{AuxiliaryCl}\label{cl:PushingVertices}
    Assuming that $q=\sum_{i=1}^k q_i\le n-m$ and $q_i\ge r$ for all
    $i\in[k]$ we have
    \begin{align}
      \label{eq:PushingOne}
      g(q_1,\ldots,q_{k-1},q_k)&< g(q_1,\ldots,q_{k-1},q_k-1) &
      &\text{if $q_k>r$}\,,\quad\text{and} \\
      \label{eq:PushingTwo}
      g(q_1,\ldots,q_{k-1},q_k)&<g(q_1,\ldots,q_{k-1}) &
      &\text{if $q_k=r$}\,.
    \end{align}
  \end{AuxiliaryCl}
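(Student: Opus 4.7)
My plan is to prove both inequalities by expanding the definition~\eqref{eq:TuranExt:g} of~$g$, subtracting, and checking that the resulting difference is negative under the hypotheses of the claim.

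For~\eqref{eq:PushingOne}, the operation $q_k\mapsto q_k-1$ decreases $q$ by one and leaves $k$ unchanged, so all summands involving only $q_1,\dots,q_{k-1}$ cancel. The only nontrivial input is the identity
\begin{equation*}
  \tur{r}{N+1} - \tur{r}{N} \;=\; N - \lfloor N/(r-1) \rfloor\,,
\end{equation*}
which holds because the extra vertex in $\Tur{r}{N+1}$ lies in a smallest part of $\Tur{r}{N}$, of size $\lfloor N/(r-1)\rfloor$. Carrying out the bookkeeping one arrives at
\begin{equation*}
  \Delta_1 \;:=\; g(q_1,\dots,q_k) - g(q_1,\dots,q_{k-1},q_k-1) \;=\; 1 - m + \lfloor (n-q)/(r-1)\rfloor\,.
\end{equation*}
Since $q\ge r$ and $n\le (r-1)m$, we have $n-q \le (r-1)m-r < (r-1)(m-1)$, hence $\lfloor(n-q)/(r-1)\rfloor \le m-2$ and $\Delta_1\le -1$.

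For~\eqref{eq:PushingTwo}, dropping the last clique decreases $q$ by $r$ and $k$ by $1$ simultaneously. Writing $q':=q-r$ and $N:=n-q'$, an analogous (but longer) computation produces
\begin{equation*}
  \Delta_2 \;:=\; g(q_1,\dots,q_{k-1},r) - g(q_1,\dots,q_{k-1}) \;=\; -\binom{r}{2} - \bigl[\tur{r}{N} - \tur{r}{N-r}\bigr] + (r-1)N - m\,.
\end{equation*}
To evaluate the Tur\'an-number difference I telescope the identity above over $r$ consecutive increments. Among any $r$ consecutive integers, the residues modulo $r-1$ cycle through $\{0,\dots,r-2\}$ with a single residue $s$ repeated; hence the resulting sum of floors admits an exact closed form. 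After simplification---using the easily verified identity $(r-2)\binom{r+1}{2}-\binom{r-1}{2}-(r-1)\binom{r}{2}=-1$---the inequality $\Delta_2<0$ collapses to $N<(r-1)m+s+1$, which is immediate from $N\le n\le(r-1)m$ and $s\ge 0$.

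The main obstacle is purely algebraic bookkeeping: each of the five summands in~\eqref{eq:TuranExt:g} must be expanded with care, and the floor functions in the Tur\'an-number increments require a brief modular-arithmetic argument when proving~\eqref{eq:PushingTwo}. Once this is done, however, both inequalities reduce cleanly---with strict slack---to the single hypothesis $n\le(r-1)m$.
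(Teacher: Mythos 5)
Your proof is correct and takes essentially the same approach as the paper: expand the definition of $g$, cancel common terms, plug in the increment formulas for Tur\'an numbers, and reduce both inequalities to the hypothesis $n\le(r-1)m$ together with $q\ge r$. The one difference is in the second case: the paper applies the ``add $r$ vertices'' identity $\tur{r}{n'+r}-\tur{r}{n'}=(r-1)n'+\binom{r}{2}-\lfloor(n'+r-1)/(r-1)\rfloor$ directly (which makes both cases reduce immediately to the single expression $m-\lfloor(n-q)/(r-1)\rfloor-1>0$), whereas you telescope the single-step identity $r$ times and handle the floors via a residue argument. Your route is longer and needs the extra cubic identity and mod-$(r-1)$ bookkeeping, but arrives at the same conclusion; noticing that $\Delta_2$ simplifies all the way down to $Q-m$ with $Q=\lfloor(N-1)/(r-1)\rfloor$ would have shortened that part considerably.
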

  \begin{clproof}[Proof of Claim~\ref{cl:PushingVertices}]
    Adding one or $r$ vertices to a Tur\'an graph $\Tur{r}{n'}$ to
    create a bigger Tur\'an graph and counting the additionally created
    edges gives
    \begin{align}\label{eq:TId1}
      \tur{r}{n'+1}-\tur{r}{n'}
        &=n'-\Big\lfloor\frac{n'}{r-1}\Big\rfloor\;,
      \quad\text{and} \\
      \label{eq:TId2}
      \tur{r}{n'+r}-\tur{r}{n'}&=(r-1)n'+\binom{r}{2} -\Big\lfloor\frac
        {n'+r-1}{r-1}\Big\rfloor\;.
    \end{align}
    Observe that $m>1$, or otherwise $r\le q\le n-1\le (r-1)m-1$ would lead to a
    contradiction. If $q_k>r$ then plugging~\eqref{eq:TId1} (with $n'=n-q$) into
    the definition of~$g$ in~\eqref{eq:TuranExt:g} we obtain
    \begin{equation*}
      g(q_1,\ldots,q_{k-1},q_k-1)-g(q_1,\ldots,q_{k-1},q_k)= m-\Big\lfloor
        \frac{n-q}{r-1}\Big\rfloor-1>0\;,
    \end{equation*}
    proving~\eqref{eq:PushingOne}. Similarly, if $q_k=r$ then~\eqref{eq:TId2}
    implies
    \[g(q_1,\ldots,q_{k-1})-g(q_1,\ldots,q_{k-1},q_k)=m-\Big\lfloor
      \frac{n-q}{r-1}\Big\rfloor-1>0\;,\]
    proving~\eqref{eq:PushingTwo}.
  \end{clproof}
  Clearly, applying Claim~\ref{cl:PushingVertices} for
  sequentially decreasing or discarding the last argument of
  $g(q_1,\ldots,q_\ell)$ gives that
  \begin{equation*}
    g\big(v(Q_1),v(Q_2),\ldots,v(Q_k)\big)=g(q_1,\ldots,q_k)\le g()=\tur{r}{n}\,.
  \end{equation*}
  Moreover, equality holds only when $k=0$, that is, when $G$ does not
  contain any $K_r$.  This proves the theorem in the case $n\le
  (r-1)m$.

  \medskip

  \noindent{\sl Now assume $n>(r-1)m$.}
  Let $X\subset V(G)-M$ be the vertices of $V(G)-M$ which possess at least one
  neighbour in $M$. Let $Y:=V(G)-M-X$.  We start by transforming~$G$ into a
  graph with the same number of edges, which satisfies the assumptions of the
  theorem, and which has the clear structure described in the following claim.

  \begin{AuxiliaryCl}
  \label{cl:PartiteStructure}
    We may assume without loss of generality that
    \begin{enumerate}[label=\abc]
     \item\label{cl:PartiteStructure:b}
        For each $x\in X$ we have $\deg(x)\ge n-m$.
     \item\label{cl:PartiteStructure:c}
        $G[M]$ is a complete $s$-partite graph with parts $M_1,\ldots,M_s$,
        for some $s\leq r-1$. Moreover, $\neighbor(u,X)=\neighbor(u',X)$ for
        all $u,u'\in M_i$ and $1\leq i\leq s$.
      \item\label{cl:PartiteStructure:d}
        $G[X]$ is a complete $t$-partite graph with parts $X_1,\ldots,X_t$,
        for some $t$.
      \item\label{cl:PartiteStructure:e} 
        For each~$M_i$ and~$X_j$ with $i\in[s]$
        and $j\in[t]$, either $G[M_i,X_j]$ is complete or empty, which we denote
        by $M_i\sim X_j$ and $M_i\nsim X_j$, respectively.  For each $i\in
        [s]$ we have $M_i\sim X_j$ for at most $r-2$ values of~$j$.
   \end{enumerate}
  \end{AuxiliaryCl}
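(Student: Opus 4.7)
I establish the four properties through a sequence of modifications to $G$ that preserve both the edge count and the hypothesis that no $K_r$ intersects $M$, together with edge-maximality arguments. The order is (c), then (d) and (e) in parallel, and finally (b) using all the structure already derived.

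For (c), I apply iteratively the following Zykov-type symmetrization: if $u,u'\in M$ are non-adjacent with $\deg(u)\ge\deg(u')$, replace $\neighbor(u')$ by $\neighbor(u)\setminus\{u'\}$. The edge count does not drop, since $|\neighbor(u)\setminus\{u'\}|=\deg(u)\ge\deg(u')$. Any resulting $K_r$ meeting $M$ must contain $u'$ (only $u'$'s edges changed), and its other $r-1$ vertices lie in $\neighbor(u)$ and form a $K_{r-1}$ present already in $G$; together with $u\in M$ they would have formed a forbidden $K_r$ in $G$, a contradiction. Iterating makes $G[M]$ complete multipartite, with at most $r-1$ parts since $G[M]$ itself is $K_r$-free. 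Vertices in the same part end up with identical neighborhoods in $V(G)$, hence identical $X$-neighborhoods.

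For (d), the same Zykov compression applied inside $X$ yields complete multipartite $G[X]$ with parts $X_1,\dots,X_t$, and (as a by-product) identical $M$-neighborhoods inside each~$X_j$. The pullback argument is the same: if replacing $\neighbor(x')$ by $\neighbor(x)\setminus\{x'\}$ created a new $K_r$ meeting $M$ at some $m\in M$, its remaining vertices would lie in $\neighbor(x)$, and $\{x,m,\dots\}$ would already be a forbidden $K_r$ in $G$. For (e), the combined clone-class structure forces each slice $G[M_i,X_j]$ to be complete or empty (as $X_j$'s common $M$-neighborhood is a union of $M$-parts and $M_i$'s common $X$-neighborhood is a union of $X$-parts). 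If $M_i\sim X_j$ held for $r-1$ distinct values of $j$, then selecting one vertex from each of these $X_j$ gives a $K_{r-1}$ by the complete multipartiteness of $G[X]$; together with any vertex of $M_i$ this is a $K_r$ meeting $M$, a contradiction.

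For (b), I first use edge-maximality: for $x\in X$ and any non-neighbor $w\in V(G)\setminus M$, adding $xw$ would force a $K_r$ meeting some $v\in M$ and adjacent to both $x$ and $w$, so in particular $w$ would have a neighbor in $M$ and thus $w\in X$. Hence $x$ is adjacent to every vertex of $Y$, i.e., $\deg(x,Y)=|Y|$. Combined with (d) and (e), for $x\in X_j$,
\[
  \deg(x)=|Y|+(|X|-|X_j|)+\sum_{i\,:\,M_i\sim X_j}|M_i|,
\]
so (b) reduces to the structural inequality $\sum_{i:M_i\sim X_j}|M_i|\ge|X_j|$ for every $j$. I would close this by a swap argument: if the inequality failed for some $j$, exchanging the roles of $X_j$ and $\bigcup_{i:M_i\sim X_j}M_i$ (moving $X_j$ into $M$ and the corresponding $M$-parts out) yields a graph with strictly more edges while preserving the no-$K_r$-through-$M$ condition, contradicting maximality of $e(G)$. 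Verifying the edge gain and the preservation of the constraint under this swap is the main obstacle of the proof.
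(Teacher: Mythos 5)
Your treatment of parts~\ref{cl:PartiteStructure:c}, \ref{cl:PartiteStructure:d}, and~\ref{cl:PartiteStructure:e} via Zykov-type symmetrisation matches the paper's argument, including the pullback check that no new $K_r$ through $M$ is created and the counting argument that $M_i\sim X_j$ for at most $r-2$ values of $j$.

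For~\ref{cl:PartiteStructure:b}, however, you take a genuinely different route and it has a gap. The paper proves~\ref{cl:PartiteStructure:b} \emph{first}, by a one-step local modification: if $\deg(x)<n-m$ for some $x\in X$, delete every edge at $x$ and join $x$ to all of $(X\cup Y)\setminus\{x\}$. This does not decrease the edge count (new degree $n-m-1\ge\deg(x)$), it creates no $K_r$ through $M$ since $x$ now has no $M$-neighbours, and it moves $x$ from $X$ to $Y$, so iterating terminates. You instead prove~\ref{cl:PartiteStructure:b} \emph{last}, reducing it via the partite structure and your (correct) edge-maximality observation $\deg(x,Y)=|Y|$ to the inequality $\sum_{i\colon M_i\sim X_j}|M_i|\ge|X_j|$, which you propose to close by ``exchanging the roles of $X_j$ and $\bigcup_{i\colon M_i\sim X_j}M_i$.'' That swap is not a legal operation: $M$ is a fixed datum of the theorem, and the forbidden configurations are $K_r$'s meeting the \emph{given} set $M$, not meeting whatever set plays the role of $M$ after a relabelling. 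One cannot move vertices of $X_j$ into $M$ or vertices of $M_i$ out of it while keeping the hypothesis meaningful. Moreover, the target inequality is not a consequence of edge-maximality alone together with (c)--(e): if $\deg(x)<n-m$ for some $x\in X_j$, what edge-maximality gives you (after contemplating exactly the paper's modification) is $\deg(x)=n-m-1$, not a contradiction; the inequality only becomes available \emph{after} performing the modification, which is precisely the step you are missing. Note also that in the extremal graph $\Turm{r}{n}{m}$ the inequality holds with equality ($|M_i|=|X_j|=m$), so there is no slack to be had from a counting or averaging argument either. The paper's direct approach avoids all of this.
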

  \begin{clproof}[Proof of Claim~\ref{cl:PartiteStructure}]
    To see~\ref{cl:PartiteStructure:b}, observe that, if some $x\in X$ were
    adjacent to fewer than $n-m$ vertices of $G$, then deleting all edges
    adjacent to $x$ and inserting edges from $x$ to all vertices in $X\cup Y$
    (except $x$) would yield a modified graph with no $K_r$ intersecting $M$, and
    with at least as many edges as $G$. Note that $x$ gets removed from the set $X$ of neighbours of $M$ to $Y$  during this modification.

    Now we turn to~\ref{cl:PartiteStructure:c}. Suppose that $u$ and
    $v$ are two non-adjacent vertices of $M$. If $\deg(u)\geq\deg(v)$,
    then the graph $G'$ obtained from $G$ by deleting all edges
    emanating from $v$ and inserting all edges from $v$ to
    $\neighbor(u)$ certainly does not have fewer edges than~$G$, and
    further $G'$ does not have any copy of~$K_r$ intersecting~$M$. Clearly,
    repeating this process for every pair of non-adjacent vertices
    of~$M$ gives a graph with the desired property.

    Applying an analogous process to non-adjacent vertices in~$X$ we
    infer~\ref{cl:PartiteStructure:d}. Note that these deletion and insertion
    processes in~$M$ and~$X$ moreover guarantee the first part
    of~\ref{cl:PartiteStructure:e}. The second part follows since otherwise we
    would obtain a $K_r$ intersecting~$M$.
  \end{clproof}
  
  In the following we assume that~$G$ has the partite structure described in
  Claim~\ref{cl:PartiteStructure} and use it to infer some further properties
  of~$G$ which in turn will allow us to obtain the desired bound on the edges
  in~$G$.
  By \ref{cl:PartiteStructure:b}  of Claim~\ref{cl:PartiteStructure} we
  have $|X_j|+\sum_{i:M_i\nsim X_j}|M_i|\leq m$, and hence
  \begin{equation}\label{eq:previous}
    |X|=\sum_j|X_j|\leq \sum_j\Big(m-\sum_{i:M_i\nsim
        X_j}|M_i|\Big)=\sum_j\sum_{i:M_i\sim X_j}|M_i|\leq(r-2)m\,,
  \end{equation}
  where the last inequality follows from \ref{cl:PartiteStructure:e} of
  Claim~\ref{cl:PartiteStructure}.

  Clearly, this implies $|Y|=n-|X|-|M|\ge n-(r-1)m>0$ which allows us to
  conclude that the inequality in
  Claim~\ref{cl:PartiteStructure}\ref{cl:PartiteStructure:b} is in fact an
  equality: Suppose for contradiction that $\deg(x)\geq n-m+1$ for some $x\in
  X$.  Then we may select any $y\in Y$ and obtain a graph $G'$ by deleting all
  edges incident to $y$ and inserting all edges from $y$ to the neighbours of
  $x$. This graph continues to satisfy the conditions of the theorem and has at
  least one more edge. It follows that for each $x\in X$ we have
  $\deg(x)=n-|M|$.

  For each $i\in [s]$ we also have that 
  $M_i\sim X_j$ for exactly $r-2$ values of $j$ (otherwise we could set all
  vertices of $M_i$ adjacent to $y$ for some $y\in Y$ and gain edges,
  since $|Y|>0$). It follows that in fact equality must hold
  in~\eqref{eq:previous} and hence $|X|=(r-2)m$. 
  This implies that $|X\cup M|=(r-1)m$. Hence we may apply the first case of the
  proof on the graph $G[X\cup M]$ and conclude that $e(G[X\cup M])\le
  \tur{r}{(r-1)m}=m^2\binom{r-1}{2}$. Therefore,
  \begin{align*}
    e(G)&=e(G[X\cup M])+|X||Y|+\tbinom{|Y|}{2}\\
    &\le m^2\tbinom{r-1}{2}+m(r-2)(n-(r-1)m)+\tbinom{n-(r-1)m}{2}
     =\turm{r}{n}{m}\;,
  \end{align*}
  as desired.
\end{proof}

\section{Approximately Tur\'annical random hypergraphs}
\label{sec:random:approx}

In this section we prove Theorem~\ref{thm:random:approx}. As noted in
Section~\ref{sec:intro}, the simple deterministic part~\ref{prop:density:approx}
of Proposition~\ref{prop:density}, that no too sparse hypergraph $\cF$ can be
$\eps$-approximately Tur\'annical, gives the $0$-statement. We therefore
focus on the proof of the $1$-statement. To this end we use the following
theorem of Erd\H{o}s and Simonovits~\cite{ErdSim:Supersaturated}.

\begin{theorem}[Erd\H{o}s \& Simonovits~\cite{ErdSim:Supersaturated}]
  \label{thm:TuranDensity} 
  Given any $r\in\mathbb N$ and $\eps>0$, there exists $\delta>0$
  such that the following is true. If $G$ is any $n$-vertex graph with $e(G)\geq (1+\eps)\tur{r}{n}$, then
  there are at least~$\delta n^r$ copies of $K_r$ in $G$.
\end{theorem}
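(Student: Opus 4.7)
The plan is to apply the classical averaging argument over small random vertex subsets, which converts the single $K_r$-copy guaranteed by Tur\'an's theorem into a positive density of copies. Fix $r\ge 3$ and $\eps>0$.

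First I would choose a constant $t=t(r,\eps)\ge r$ large enough that $(1-\tfrac{1}{r-1})-\tur{r}{t}/\binom{t}{2}$ is much smaller than $\eps$, which is possible since $\tur{r}{t}/\binom{t}{2}\to 1-\tfrac{1}{r-1}$ as $t\to\infty$. Let $G$ be an $n$-vertex graph with $e(G)\ge(1+\eps)\tur{r}{n}$, where $n$ is large compared to $t$ (the case of small $n$ can be absorbed into~$\delta$). Sample a uniformly random $t$-subset $S\subset V(G)$ and set $X:=e(G[S])$. By linearity of expectation,
\begin{equation*}
  \Exp[X]=\frac{\binom{t}{2}}{\binom{n}{2}}\cdot e(G)\ge (1+\eps)\,\frac{\tur{r}{n}}{\binom{n}{2}}\cdot\binom{t}{2}\ge (1+\tfrac{\eps}{2})\,\tur{r}{t},
\end{equation*}
where the last inequality uses that $\tur{r}{n}/\binom{n}{2}$ tends to $1-\tfrac{1}{r-1}$ from below, and that $t$ was chosen so that $\tur{r}{t}/\binom{t}{2}$ is essentially this same limit.

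Next, since $0\le X\le\binom{t}{2}$, the reverse Markov inequality gives
\begin{equation*}
  \Prob\bigl[X>\tur{r}{t}\bigr]\ge\frac{\Exp[X]-\tur{r}{t}}{\binom{t}{2}-\tur{r}{t}}\ge\alpha
\end{equation*}
for some constant $\alpha=\alpha(r,\eps)>0$. For each such $S$, Tur\'an's theorem produces at least one copy of $K_r$ inside $G[S]$. Since each $K_r$-copy in $G$ lies in exactly $\binom{n-r}{t-r}$ of the $\binom{n}{t}$ possible $t$-subsets, the total number of $K_r$-copies in $G$ is at least
\begin{equation*}
  \alpha\cdot\frac{\binom{n}{t}}{\binom{n-r}{t-r}}=\alpha\cdot\frac{\binom{n}{r}}{\binom{t}{r}}\ge\delta\,n^r
\end{equation*}
for a suitable $\delta=\delta(r,\eps)>0$, depending on $\alpha$ and on the fixed constant~$t$.

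The only delicate point is the bookkeeping in the averaging step: one must take $t$ large enough that $\tur{r}{t}/\binom{t}{2}$ is within a small multiple of $\eps$ of its limit $1-\tfrac{1}{r-1}$, and then $n$ large enough so that the corresponding error for $\tur{r}{n}/\binom{n}{2}$ is likewise absorbed. Beyond arranging these constants, the proof is entirely routine.
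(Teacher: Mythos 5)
The paper imports this result from Erd\H{o}s and Simonovits by citation and does not prove it, so there is no in-paper proof to compare against; your argument is the standard random-subset averaging proof of supersaturation, and it is correct. The one slip is a harmless reversal of direction: $\tur{r}{n}/\binom{n}{2}$ tends to $1-\tfrac{1}{r-1}$ from \emph{above}, not from below --- indeed $\tur{r}{n}/\binom{n}{2}\ge\tfrac{r-2}{r-1}$ for all $n\ge r-1$ (for example $\tur{3}{n}/\binom{n}{2}=\tfrac12\cdot\tfrac{n}{n-1}>\tfrac12$ when $n$ is even). This actually makes the expectation step easier than you describe: one gets $\Exp[X]\ge(1+\eps)\tfrac{r-2}{r-1}\binom{t}{2}$ unconditionally in $n$, so the desired inequality $\Exp[X]\ge(1+\tfrac{\eps}{2})\tur{r}{t}$ reduces to choosing $t$ large enough that $\tur{r}{t}/\binom{t}{2}\le\tfrac{1+\eps}{1+\eps/2}\cdot\tfrac{r-2}{r-1}$, with no requirement that $n$ be large relative to $t$ beyond $n\ge t$ (and the regime $r\le n<t$ is absorbed by taking $\delta<t^{-r}$, since Tur\'an's theorem already gives one $K_r$ there). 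The remaining steps --- reverse Markov, Tur\'an inside each heavy $t$-set, and dividing by the multiplicity $\binom{n-r}{t-r}$ to obtain at least $\alpha\binom{n}{r}/\binom{t}{r}=\Omega(n^r)$ distinct copies --- are exactly right.
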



\begin{proof}[Proof of Theorem~\ref{thm:random:approx}] Given $\eps>0$, by
Theorem~\ref{thm:TuranDensity}, there exists $\delta>0$ such that if $G$ is any
graph with $e(G)\geq (1+\eps)\tur{r}{n}$, then $G$ contains at least~$\delta
n^r$ copies of $K_r$.

Let $p\geq\binom{n}{2}n^{-r}/\delta$.
Given one graph $G$ with at least $\delta n^r$ copies of $K_r$, the probability
that $G$ is not detected by $\cR^{(r)}(n,p)$ is at most
\[(1-p)^{\delta n^r}~.\]
Summing over the at most $2^{\binom{n}{2}}$ such graphs $G$, we see that the
probability that there exists an $n$-vertex graph $G$, with at least $\delta
n^r$ copies of $K_r$, which is undetected by $\cR^{(r)}(n,p)$, is at most
\[2^{\binom{n}{2}}(1-p)^{\delta n^r}<2^{\binom{n}{2}}e^{-p\delta n^r}\leq
2^{\binom{n}{2}}e^{-\binom{n}{2}}~,\] which tends to zero as $n$ tends to
infinity. In particular, with probability tending to $1$, any graph $G$ with
$e(G)\geq (1+\eps)\tur{r}{n}$ is detected by $\cR^{(r)}(n,p)$.
\end{proof}

\section{Exactly Tur\'annical random hypergraphs}
\label{sec:random:exact}

In this section we prove Theorem~\ref{thm:random:exact}.  The $0$-statement
of Theorem~\ref{thm:random:exact} follows from
Proposition~\ref{prop:density}~(a) for $r>3$, and from Lemma~\ref{lem:r=3}
below for $r=3$.

\begin{lemma}\label{lem:r=3}
For $p\le \frac 12$,  we have $\Prob  (\cR^{(3)}(n,p)\mbox{ is
Tur\'annical })=o(1)$.
\end{lemma}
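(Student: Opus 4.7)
The plan is to follow the deterministic construction from the proof of Proposition~\ref{prop:density}\ref{prop:density:exact} in the case $r=3$. Recall that the key input there was the existence of a pair $\{u,v\}\subset[n]$ whose link (in the restriction hypergraph) contains at most $\lceil n/2\rceil-2$ singletons: one then obtains a graph $G$ with $\tur{3}{n}+1$ edges not detected by $\cF$ by putting $\{u,v\}$ together with the vertices of $\link(u,v)$ into one class of a balanced bipartition of $[n]$, taking the complete bipartite graph between the two classes, and adding the edge $uv$. Indeed, the only triangles of the resulting graph are $\{u,v,w\}$ with $w$ in the other class, and by construction none of those lie in the link of $\{u,v\}$. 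Hence it will be enough to show that with probability $1-o(1)$ the random hypergraph $\cR^{(3)}(n,p)$ contains some pair $\{u,v\}$ with $|\link(u,v)|\le\lceil n/2\rceil-2$.

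To exhibit such a pair, I would pass to a matching $\cM=\{P_i:i\in[k]\}$ on $[n]$ with $P_i=\{2i-1,2i\}$ and $k=\lfloor n/2\rfloor$. Since any two distinct $P_i,P_j\in\cM$ are vertex-disjoint and every hyperedge of $\cR^{(3)}(n,p)$ has only three vertices, no single hyperedge can contain two different members of $\cM$. It follows that the sets of hyperedges determining the links of the $P_i$ are pairwise disjoint, and therefore the random variables $|\link(P_i)|$ are mutually independent, each distributed as $\Bin(n-2,p)$.

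The main step is to establish a constant $c=c(p)>0$ with
\[
  \Prob\bigl(|\link(P_i)|\le\lceil n/2\rceil-2\bigr)\ge c
\]
for all sufficiently large $n$. When $p<1/2$ is bounded away from $1/2$, the mean $(n-2)p$ lies linearly below the threshold $\lceil n/2\rceil-2$ and Chernoff's inequality gives a probability tending to $1$. The delicate case is $p=1/2$, in which the threshold sits only about one unit below the mean $(n-2)/2$; here I would use the symmetry of $\Bin(n-2,1/2)$ about its mean, together with the fact that the modal mass of the binomial is $\Theta(n^{-1/2})$, to conclude that the probability tends to $1/2$, so that any $c<1/2$ suffices for $n$ large. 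Given this estimate, independence yields
\[
  \Prob\bigl(\text{no $P_i$ satisfies the bound}\bigr)\le(1-c)^{k}=o(1),
\]
which is exactly the required statement.

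I expect the main obstacle to be precisely the boundary case $p=1/2$: there the probability that a single pair has a small link does not tend to $1$, so a union bound over pairs is unhelpful and one cannot conclude by examining a single pair. The resolution is to exploit the independence of link sizes across a linear-sized family of vertex-disjoint pairs, amplifying a bounded per-pair probability into a $1-o(1)$ bound on the existence of at least one good pair.
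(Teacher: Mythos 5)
Your proposal is correct and takes essentially the same approach as the paper: reduce to finding a pair $\{u,v\}$ with small link, pass to a linear-size matching of vertex-disjoint pairs whose link sizes are independent, establish a per-pair constant lower bound on the probability of a small link (the delicate point being $p=1/2$), and amplify by independence. The only cosmetic difference is that the paper first invokes monotonicity to assume $p=1/2$ outright, which also repairs the small gap in your case split (sequences $p_n\nearrow 1/2$ are covered by neither of your two cases as literally stated, though stochastic monotonicity of the binomial handles them immediately).
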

\begin{proof}
  By monotonicity, we may assume that $p=\frac 12$. As in the proof of
  Proposition~\ref{prop:density} it suffices to show that there is a.a.s.\
  a pair of vertices $u,v\in V(\cR^{(3)}(n,p))$ with $e(\link(u,v))\le \frac n2
  -2$ (we remark that the hypergraph $\link(u,v)$ is 1-uniform in this case). So
  choose two arbitrary vertices $u$ and~$v$. Observe that from the properties binomial distribution $\Prob\left(e(\link(u,v)\right)> \frac
  n2 -2)\leq 0.6$, for large enough $n$. Let $\{u_1,v_1\}, \dots ,\{u_{\lfloor
    \frac n2\rfloor},v_{\lfloor \frac n2\rfloor}\}$ be disjoint pairs of
  vertices. Using the independence of the variables $e(\link(u_i,v_i))$, we
  obtain that $\Prob\left(\forall i\colon e(\link (u_i,v_i)> \frac
    n2-2\right)\le 0.6^{\lfloor \frac n2\rfloor }=o(1)$.
\end{proof}

For the
$1$-statement of Theorem~\ref{thm:random:exact} we shall, in
Lemma~\ref{lem:structure}, investigate the structural properties of graphs with more edges than a Tur\'an
graph has, and classify them into three possible categories.
We then treat these
three types of graphs separately, and show for each of them that with high
probability a random restriction hypergraph~$\cR^{(r)}(n,p)$
detects \emph{each} of the graphs of this type.
Let us first take a small detour.

\smallskip


The Erd\H{o}s-Simonovits theorem, Theorem~\ref{thm:TuranDensity}, states that
graphs~$G$ with many more edges than a Tur\'an graph~$\Tur{r}{n}$ contain a
positive fraction of the possible $r$-cliques. This is not true anymore
when~$G$ has just one edge more than~$\Tur{r}{n}$. However, as the well-known
stability theorem of Simonovits~\cite{S68} shows, we can still
draw the same conclusion when we know in addition that~$G$ looks very different
from~$\Tur{r}{n}$. To state the result of Simonovits we need the following
definition. Let~$\eps$ be a positive constant and~$G$ and~$H$ be graphs on $n$ vertices.
If~$G$ cannot be obtained from $H$ by adding and deleting together at most
$\eps n^2$ edges, then we say that~$G$ is \emph{$\eps$-far} from~$H$.

\begin{theorem}[Simonovits~\cite{S68}]
\label{thm:stability}
  For every $r\ge 3$ and $\eps>0$ there exists
  $\delta>0$ such that any $n$-vertex graph $G$ with
  $e(G)\ge\tur{r}{n}$ which is $\eps$-far from~$\Tur{r}{n}$ contains at
  least $\delta n^r$ copies of $K_r$.
\end{theorem}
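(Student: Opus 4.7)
The plan is to prove the statement via Szemer\'edi's regularity lemma: applied to $G$, it yields a dichotomy according to whether the reduced graph $R$ contains $K_r$ or not. In the first case, the counting lemma produces $\delta n^r$ copies of $K_r$ in $G$. In the second case, a robust form of Tur\'an's theorem forces $R$ to be almost balanced $(r{-}1)$-partite, so $G$ itself is close to $\Tur{r}{n}$, contradicting the $\eps$-farness hypothesis.

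Concretely, I would apply the regularity lemma to $G$ with regularity parameter $\eta$ and density threshold $d$, both small positive constants depending on $\eps$ (to be fixed at the end), obtaining an equitable partition $V_0 \dcup V_1 \dcup \cdots \dcup V_k$ with $|V_0| \le \eta n$. Form the reduced graph $R$ on $[k]$ by adding the edge $ij$ iff $(V_i, V_j)$ is $\eta$-regular with density at least $d$.

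If $R$ contains a copy of $K_r$, the standard counting lemma for regular pairs gives at least $\tfrac12(d-r\eta)^{\binom{r}{2}}(n/k)^r$ copies of $K_r$ in $G$, which is at least $\delta n^r$ for a suitable $\delta = \delta(\eps) > 0$, and we are done. Otherwise $R$ is $K_r$-free, so $e(R) \le \tur{r}{k}$ by Tur\'an's theorem. Since the edges of $G$ not lying inside an $R$-edge (those touching $V_0$, inside clusters, in irregular pairs, or in sparse pairs) total at most $\alpha_0 n^2$ with $\alpha_0 \to 0$ as $\eta, d \to 0$ and $k \to \infty$, the bound $e(G) \ge \tur{r}{n}$ forces $e(R) \ge \tur{r}{k} - \alpha k^2$ for some $\alpha = \alpha(\alpha_0) \to 0$.

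A robust form of Tur\'an's theorem for $R$, which follows for instance by comparing degree sequences to those of $\Tur{r}{k}$ and iteratively symmetrising, implies that $R$ is $\alpha'$-close to $\Tur{r}{k}$ with $\alpha' \to 0$ as $\alpha \to 0$. Blowing up each vertex of $R$ to its corresponding cluster and comparing edge by edge, $G$ can be modified into a balanced blow-up of $\Tur{r}{k}$ by changing at most $(\alpha' + \alpha_0) n^2 + \bigO(n^2/k)$ edges, and this blow-up is itself $\bigO(n^2/k)$-close to $\Tur{r}{n}$. Fixing $\eta, d$ small enough and $k$ large, the total edit distance drops below $\eps n^2$, contradicting the $\eps$-farness of $G$ from $\Tur{r}{n}$. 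The main obstacle lies in the quantitative bookkeeping of the hierarchy $\eps \gg \alpha' \gg \alpha \gg \alpha_0 \gg \eta, d, 1/k$, and in ensuring that the robust Tur\'an step delivers closeness to the \emph{unique} extremal structure rather than merely to \emph{some} near-extremal graph.
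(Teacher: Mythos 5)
The paper does not prove this result: it cites it directly from Simonovits~\cite{S68}, whose original 1968 argument predates the regularity lemma and works via degree comparisons and ``progressive induction.'' Your regularity-lemma route is the modern standard one, and it is the natural route for the statement as phrased here, which concerns graphs with \emph{few} copies of $K_r$ rather than $K_r$-free graphs; passing from the $K_r$-free form to this one already calls for a removal-lemma or regularity argument. The decomposition you propose --- branch on whether the reduced graph $R$ contains $K_r$, use the counting lemma in the first branch, and analyse a near-extremal $K_r$-free graph $R$ in the second --- is sound, and it is not circular: the ingredient you need for $R$ is stability for genuinely $K_r$-free dense graphs, a strictly simpler fact that one proves without regularity.

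The one step that does not work as sketched is ``iteratively symmetrising'' to get the robust Tur\'an statement for $R$. Zykov symmetrization preserves $K_r$-freeness, never decreases the edge count, and terminates at a complete multipartite graph; but a single step can already relocate $\Theta(k)$ edges and the full iteration $\Theta(k^2)$, so closeness of the symmetrized graph to $\Tur{r}{k}$ tells you nothing about closeness of $R$ itself --- which is exactly what you need, and a sharper version of the worry you raise at the end about landing near \emph{some} near-extremal graph rather than near $\Tur{r}{k}$. The correct mechanism is along the degree-sequence lines you also hint at: a degree-deficit count shows that a $K_r$-free graph on $k$ vertices with at least $\tur{r}{k}-\alpha k^2$ edges has only $\bigO(\sqrt\alpha\,k)$ vertices of degree below $\bigl(1-\tfrac{1}{r-1}-\bigO(\sqrt\alpha)\bigr)k$; what remains has minimum degree forcing it to be $(r-1)$-partite (an Andr\'asfai--Erd\H{o}s--S\'os-type statement for $K_r$); and reinserting the removed vertices, filling in crossing pairs, and rebalancing the parts all cost only $\bigO(\sqrt\alpha\,k^2)$ further edits. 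With that substitution your outline, together with the parameter hierarchy you describe, is complete.
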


If a graph~$G$ is not far from a Tur\'an graph, on the other hand, we have a
lot of structural information about~$G$: we know that its vertex set can be
partitioned into $r-1$ sets which are almost of the same size and almost
independent, such that most of the edges between these sets are present. If in
addition almost all vertices of~$G$ have many neighbours in all partition
classes other than their own, then we say that~$G$ has an $\eps$-close $(r-1)$-partition.
The following definition makes this precise. 

\begin{definition}[$\eps$-close $(r-1)$-partition]
\label{def:partition}
  Let~$G=(V,E)$ be a graph. An \emph{$\eps$-close $(r-1)$-partition} of~$G$ is a
  partition $V=V_0\dcup V_1\dcup\dots\dcup V_{r-1}$ of its vertex set such that
  \begin{enumerate}[label=\rom]
    \item\label{def:partition:size} 
      $|V_0|\le\eps^2 n$ and $|V_i|\ge(1-\eps)\frac{n}{r-1}$ for all
      $i\in[r-1]$,
    \item\label{def:partition:deg} 
      for all $v\in V_0$ we have $\deg(v)\le(1-\eps^2)\frac{r-2}{r-1}n$, and
      for all $i,j\in[r-1]$ with $i\neq j$ and for all $v\in V_i$ we have
      $\deg(v,V_j)\ge(1-\eps)|V_j|$.
  \end{enumerate}
  The edges (non-edges) in such a partition that run between two different parts $V_i$
  and $V_j$ with $1\leq i,j\leq r-1$, are called \emph{crossing}, and those that lie
  within a partition class $V_i$ with $1\leq i\leq r-1$, are \emph{non-crossing}.
\end{definition}

The following lemma states that a graph which has at least as many edges as
$\Tur{r}{n}$ either contains a vertex whose neighbourhood has a positive
$K_{r-1}$-density, or has an $\eps$-close $(r-1)$-partition.

\begin{lemma}
\label{lem:structure}
  For every integer $r\ge 3$ and real $0<\eps\leq 1/(16r^2)$ there
  exists a positive constant $\delta$ such that for every
  $n$-vertex graph~$G$ with $e(G)\ge\tur{r}{n}$ one of the the following is true.
  \begin{enumerate}[label=\rom]
    \item\label{lem:structure:Kr} 
      Some vertex in~$G$ is contained in at least $\delta n^{r-1}$ copies
      of~$K_r$.
    \item\label{lem:structure:part} 
      $G$ has an $\eps$-close $(r-1)$-partition.
  \end{enumerate}
\end{lemma}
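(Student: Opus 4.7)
The plan is to combine Simonovits's stability theorem (Theorem~\ref{thm:stability}) with a cleaning argument. Fix an auxiliary parameter $\eps_1=\eps_1(\eps,r)$ to be chosen as a small polynomial in $\eps$ (roughly $\eps_1\le\eps^{2r}$ will do), and apply Theorem~\ref{thm:stability} with parameter $\eps_1$ to obtain a constant $\delta_1>0$. If $G$ is $\eps_1$-far from $\Tur{r}{n}$, the theorem supplies $\delta_1 n^r$ copies of $K_r$, and averaging over vertices produces conclusion~\itmit{i} at once. Otherwise we are handed a balanced ordered partition $V(G)=W_1\dcup\cdots\dcup W_{r-1}$, with $|W_i|\in\{\lfloor n/(r-1)\rfloor,\lceil n/(r-1)\rceil\}$, for which the non-crossing edges and missing crossing edges together number at most $\eps_1 n^2$.

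The cleaning has two stages. First, set $V_0:=\{v:\deg_G(v)<(1-\eps^2)\tfrac{r-2}{r-1}n\}$; this is automatically compatible with the upper degree bound required on $V_0$. Combining $\sum_v|\deg_G(v)-\deg_{\Tur{r}{n}}(v)|\le 2\eps_1 n^2$ (twice the number of edge changes from the Tur\'an graph) with the fact that every $v\in V_0$ contributes at least $\eps^2 n/2$ to this sum yields $|V_0|\le 4\eps_1 n/\eps^2\le\eps^2 n$. Second, for each high-degree vertex $v$ let $\nu_j(v)$ count its non-neighbours in $W_j$. Since $\sum_j\nu_j(v)\le n/(r-1)+\eps^2 n$ and $\eps\le 1/(16r^2)$, at most one index $j^*(v)$ can satisfy $\nu_{j^*}(v)\ge(1-\eps^{1.5})n/(r-1)$. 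When such a $j^*(v)$ exists I assign $v$ to $V_{j^*(v)}$; if no such index exists for some $v$, then $v$ alone witnesses conclusion~\itmit{i} by the next argument.

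In that exceptional case $\deg_G(v,W_j)\ge\eps^{1.5}n/(r-1)-1$ for every $j\in[r-1]$, so the number of tuples $(u_1,\ldots,u_{r-1})$ with $u_j$ a neighbour of $v$ in $W_j$ is at least $\bigl(\eps^{1.5}n/(r-1)-1\bigr)^{r-1}$, while those containing a missing crossing edge $u_j u_k$ number at most $\binom{r-1}{2}\eps_1 n^2\bigl(n/(r-1)\bigr)^{r-3}$. Taking $\eps_1$ small compared with $\eps^{1.5(r-1)}$ makes the bad tuples a minority and produces $\delta n^{r-1}$ copies of $K_r$ through $v$. Otherwise the partition $V_0\dcup V_1\dcup\cdots\dcup V_{r-1}$ is well-defined and I verify the remaining axioms of Definition~\ref{def:partition}: each moved vertex carries at least $n/(2(r-1))$ missing crossing edges, so at most $O(\eps_1 n)$ vertices move, which yields $|V_i|\ge(1-\eps)n/(r-1)$; and for $v\in V_i$ the uniqueness of $j^*(v)$ forces $\nu_k(v)\le 2\eps^{1.5}n$ for every $k\ne i$, whence $\deg_G(v,V_k)\ge|W_k|-2\eps^{1.5}n-|V_0|-O(\eps_1 n)\ge(1-\eps)|V_k|$, where the hypothesis $\eps\le1/(16r^2)$ is exactly what absorbs the $\eps^{1.5}$-slack into the required $\eps$-factor. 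The main obstacle is the coordinated choice of $\eps_1$, which must simultaneously keep $|V_0|$ and the number of moved vertices below $\eps^2 n$ while suppressing the bad-pair count in the $K_r$-enumeration; the choice $\eps_1\le\eps^{2r}/C(r)$ with a suitably large $C(r)$ meets all these constraints.
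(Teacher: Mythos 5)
Your proposal is correct and follows essentially the same route as the paper's proof: both invoke Simonovits's stability theorem to handle the case where $G$ is far from $\Tur{r}{n}$, and then in the close case both clean up the nearly-extremal partition, showing that any vertex blocking the cleaning must lie in $\Omega(n^{r-1})$ copies of $K_r$. Your assignment rule (defining $V_0$ directly by low degree and then sorting high-degree vertices by where their non-neighbours concentrate) differs in detail from the paper's (defining $V_i$ by degree to the complement of $U_i$ and handling overlaps and residual high-degree $V_0$-vertices separately), but the analysis and the use of $\eps\le 1/(16r^2)$ to absorb the error terms match in spirit; you should just add, as the paper does by building $(\eps/2r)^{r-1}$ into $\delta$, a remark that for $n$ below a threshold depending on $r$ and $\eps$ one may choose $\delta$ so small that a single $K_r$ (which exists since $e(G)\geq\tur{r}{n}$ and, if $G\ne\Tur{r}{n}$, Tur\'an's theorem gives one) already satisfies conclusion~\itmit{i}.
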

We postpone the proof of Lemma~\ref{lem:structure} and first sketch that it
implies Lemma~\ref{lem:books}.
\begin{proof}[Proof of
Lemma~\ref{lem:books}] 
Suppose we are given $r$ and $\tilde\eps$. By monotonicity we may assume that
$\tilde\eps<1/16$. Let
$\delta$ be given by Lemma~\ref{lem:structure} with input parameters $r$ and
$\eps:=\tilde\eps/r^2$. By Lemma~\ref{lem:structure} it suffices to show that in 
each $n$-vertex graph $G$ with 
\begin{equation}\label{eq:C1}
e(G)>\tur{r}{n}
\end{equation} which possesses an
$\eps$-close $(r-1)$-partition $V(G)=V_0\dcup V_1\dcup\dots\dcup V_{r-1}$  there is
an edge contained in at least $(1-\tilde\eps)(n/(r-1))^{r-2}$ copies of $K_r$.
First observe that by~\eqref{eq:C1} and~\ref{def:partition:deg} of
Definition~\ref{def:partition} we have $e(G-V_0)>\tur{r}{n-|V_0|}$. Thus, by
Tur\'an's Theorem, there is an edge $uv\subset V_i$ for some $i\in[r-1]$. The
edge $uv$ has at least $(1-2\eps)|V_j|$ common neighbours in each $V_j$, $j\neq i$, creating at least
\[\Big(\big(1-(r-1)\eps\big)(1-\eps)\frac{n}{r-1}\Big)^{r-2}\ge
(1-r\eps)^{r-2}\Big(\frac{n}{r-1}\Big)^{r-2}\ge
(1-\tilde\eps)\Big(\frac{n}{r-1}\Big)^{r-2}\]
copies of~$K_r$.
\end{proof}

\medskip
 \begin{proof}[Proof of Lemma~\ref{lem:structure}]
Given $r$ and $\eps$, let $G$ be an $n$-vertex graph with $e(G)\ge\tur{r}{n}$.
By Theorem~\ref{thm:stability}, there exists $\gamma=\gamma(\eps,r)>0$ such that
if $G$ is $\eps^3/(16r^3)$-far from~$\Tur{r}{n}$, then $G$ contains $\gamma n^r$
copies of $K_r$. We set
\[\delta:=\min\Big\{\gamma,\,\frac{1}{r!2^rr^r},\,\frac{\eps}{4^rr^r},\,\Big(\frac{\eps}{2r}\Big)^{r-1}\Big\}\,.\]
Since $e(G)\ge\tur{r}{n}$, either $G=\Tur{r}{n}$, which clearly has an
$\eps$-close $(r-1)$-partition, or $G$ contains a copy of $K_r$. Observe that the
last term in this minimum ensures that if $n<\tfrac{2r}{\eps}$, then $\delta
n^{r-1}<1$, and thus that one copy of $K_r$ in $G$ is enough to satisfy the
Lemma. It follows that we may henceforth assume $n\ge\tfrac{2r}{\eps}$.

As $G$ contains $\gamma
n^r$ copies of $K_r$ then there
 is a vertex lying in $\gamma n^{r-1}\geq\delta n^{r-1}$ copies of
$K_r$. Thus we may assume that $G$ is not $\eps^3/(16r^3)$-far
from~$\Tur{r}{n}$. So there exists a balanced partition
$V(G)=U_1\dcup\ldots\dcup U_{r-1}$ such that the total number of non-edges
between the parts is at most $\eps^3n^2/(16r^3)$.

Now for each $1\leq i\leq r-1$, we define
\begin{equation}
\label{eq:structure:Vi}
  V_i=\left\{v\in V(G)\colon \deg(v,V(G)\setminus U_i)\geq
  \Big(\frac{r-2}{r-1}-\frac{\eps}{4r}\Big)n\right\}\,.
\end{equation}
We let $V_0:=V(G)\setminus(V_1\cup\ldots\cup V_{r-1})$. We aim to show that
either there is some vertex of $G$ which lies in at least $\delta n^{r-1}$
copies of $K_r$, or that $V_0\dcup V_1\dcup\ldots\dcup V_{r-1}$
is an $\eps$-close $(r-1)$-partition.

For each $1\leq i\leq r-1$, every vertex in $U_i\setminus V_i$ lies in at least
$\eps n/(4r)$ non-edges crossing the partition $(U_1,\ldots,U_{r-1})$. It
follows that 
\begin{equation}\label{eq:UiViSame}
|U_i\setminus V_i|\leq\frac{\eps^2 n}{4r^2}\;,
\end{equation} since there
are at most $\eps^3n^2/(16r^3)$ such non-edges. Summing over $i=1,\ldots,r-1$
we get
\begin{equation}
\label{eq:structure:V0}
  |V_0|\leq \frac{(r-1)\eps^2n}{4r^2}<\frac{\eps^2 n}{4r}<\eps^2 n\,.
\end{equation}
Since $n\ge2r/\eps$ we also have, for each $1\leq i,j\leq r-1$ with $i\neq
j$, and each $v\in V_i$, that
\begin{equation}
\label{eq:structure:degVi}
\begin{aligned}
 |V_i|&\geq |U_i|-\frac{\eps^2 n}{4r^2}>(1-\eps)\frac{n}{r-1}\,,
  \qquad\text{and} \\
  \deg(v,V_j)&\geBy{\eqref{eq:structure:Vi},\eqref{eq:UiViSame}}
  |U_j|-1-\frac{\eps n}{4r}-\frac{\eps^2 n}{4r^2} \\ 
  &\geq |V_j|-1-(r-2)\frac{\eps^2 n}{4r^2}-\frac{\eps n}{4r} -\frac{\eps^2 n}{4r^2}
  \geq(1-\eps)|V_j|\,,
\end{aligned}
\end{equation}  
where we use $\eps\le\frac1{10}$ to obtain the last inequality.

We claim that a vertex $u$ lying in more than one of the sets
$V_1,\ldots,V_{r-1}$ must lie in at least $\delta n^{r-1}$ copies of $K_r$. To
see this, observe that $u$ must have at least $(1-\eps)|V_i|$ neighbours in
$V_i$ for each $1\leq i\leq r-1$. Now consider the following method of
constructing a copy of $K_r$ in $G$ using $u$. We choose a neighbour $v_1$ of
$u$ in $V_1$, a common neighbour $v_2$ of $u$ and $v_1$ in $V_2$, and so
on. Since $\eps\le1/(16r)$, the common neighbourhood of $u,v_1,\ldots,v_{i-1}$
in $V_i$ contains at least $(1-i\eps)|V_i|>\frac{n}{2(r-1)}$ vertices for each $i$,
there are at least $\frac{n}{2(r-1)}$ choices at each of the $r-1$ steps (and in
particular this construction is possible).  This procedure may construct the
same copy of $K_r$ more than once (since at this point we do not yet know that
the sets $V_1,\ldots,V_{r-1}$ are disjoint), but not more than $(r-1)!$
times. It follows that $u$ lies in at
least \[\frac{1}{(r-1)!}\left(\frac{n}{2(r-1)}\right)^{r-1}\geq\delta n^{r-1}\]
copies of $K_r$.

Hence, we can assume from now on that the sets $V_1,\ldots,V_{r-1}$ are
disjoint.
Next we claim that a vertex $u$ in $V_0$ whose degree exceeds
$(1-\eps^2)\frac{r-2}{r-1}n$ must lie in at least $\delta n^{r-1}$ copies of
$K_{r}$. Without loss of generality, we may assume that we have
$\deg(u,V_1)\leq\deg(u,V_2)\leq\ldots\leq\deg(u,V_{r-1})$. Since $u\notin V_1$,
we have
\begin{equation}\begin{split}
\label{eq:structure:degV0V1}
  \deg(u,V_1) &=\deg(u)-\deg(u,V(G)\setminus V_1)  \\
  &\geq\deg(u)-\deg(u,U_2\dcup\ldots\dcup U_{r-1})-|U_1\setminus V_1| \\
  &\gBy{\eqref{eq:structure:Vi},\eqref{eq:UiViSame}}
    (1-\eps^2)\frac{r-2}{r-1}n-\Big(\frac{r-2}{r-1}
    -\frac{\eps}{4r}\Big)n-\frac{\eps^2 n}{4r^2} \\
  &\geq -\eps^2n+\frac{\eps n}{4r}-\frac{\eps^2 n}{4r^2}
  \geq\frac{\eps n}{16r}\,,
\end{split}\end{equation}
where the last inequality follows from $\eps\le1/(16r)$.  Since
$\deg(u,V_2)\geq\deg(u,V_1)$ and $u$ has at most $\frac{n}{r-1}+\eps^2n$
non-neighbours by assumption, we infer that 
$\deg(u,V_2)\geq\frac{n}{3(r-1)}$, using again $\eps\le1/(16r)$.
Hence
\begin{equation}
\label{eq:structure:degV0Vi}
 \deg(u,V_i)\geq\frac{n}{3(r-1)} \qquad\text{for each $2\leq i\leq r-1$.} 
\end{equation}
Now consider the same inductive construction of copies of $K_r$
containing $u$ as before. This time we know that there are at least $\frac{\eps
  n}{16r}$ choices for $v_1$, and at least
\[\frac{n}{3(r-1)}-(i-1)\eps|V_i|>\frac{n}{4(r-1)}\]
choices for $v_i$, for each
$2\leq i\leq r-1$. Since the sets $V_1,\ldots,V_{r-1}$ are disjoint, each copy
of $K_r$ can be constructed in only one way. Thus $u$ does indeed lie in at
least
\[\frac{\eps n}{16r}\left(\frac{n}{4(r-1)}\right)^{r-2}\geq\delta n^{r-1}\]
copies of $K_r$.

Accordingly, we can assume that $\deg(u)\le(1-\eps^2)\frac{r-2}{r-1}n$, for all
$u$ in~$V_0$. Together with~\eqref{eq:structure:V0}
and~\eqref{eq:structure:degVi} this implies that the partition
$V_0\dcup\ldots\dcup V_{r-1}$ satisfies~\ref{def:partition:size}
and~\ref{def:partition:deg} of Definition~\ref{def:partition} and hence is an
$\eps$-close $(r-1)$-partition of~$G$.
\end{proof}

We need a more precise structural result to handle the case $r=3$ of
Theorem~\ref{thm:random:exact}. As we shall see, this is a simple consequence of the above
proof.

\begin{corollary}
\label{cor:3structure}
  For every  $0<\eps\leq1/144$ there exists a positive constant
  $\delta$ such that for all $n$-vertex graphs~$G$ with
  $e(G)\ge\tur{3}{n}$ one of the the following is true.
  \begin{enumerate}[label=\rom]
    \item\label{cor:3structure:tri}
      $G$ contains at least $\delta n^3$ triangles.
    \item\label{cor:3structure:bip}
      There is a vertex $u$ of $G$ such that $\neighbor(u)\supset
      X\dcup Y$, where $|X||Y|\geq \eps n^2/288$ and $e(X,Y)\geq (1-4\eps)|X||Y|$.
    \item\label{cor:3structure:part}
      $G$ has an $\eps$-close $2$-partition.
  \end{enumerate}
\end{corollary}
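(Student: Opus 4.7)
The strategy is to follow the proof of Lemma~\ref{lem:structure} specialised to $r=3$ and show that each branch in which that proof produces a vertex lying in $\delta n^{2}$ triangles actually yields the stronger neighbourhood structure in~\ref{cor:3structure:bip}; the final branch, producing an $\eps$-close $2$-partition, is exactly~\ref{cor:3structure:part}, while the preliminary ``$G$ is $\eps^3/144$-far from $\Tur{3}{n}$'' branch gives~\ref{cor:3structure:tri} directly via Theorem~\ref{thm:stability}.

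After fixing $\gamma=\gamma(\eps)>0$ from Theorem~\ref{thm:stability} applied with parameter $\eps^3/144$ and choosing $\delta$ to be a small enough constant, I would first dispose of the far-from-Tur\'an case, which supplies $\gamma n^3 \geq \delta n^3$ triangles and hence~\ref{cor:3structure:tri}. Otherwise, I would fix the balanced bipartition $V(G)=U_1\dcup U_2$ witnessing that $G$ is close to $\Tur{3}{n}$ (so at most $\eps^3n^2/144$ pairs in $U_1\times U_2$ are non-edges), define $V_1,V_2$ by~\eqref{eq:structure:Vi} and $V_0:=V(G)\setminus(V_1\cup V_2)$, and split into the two ``bad'' subcases of the proof of Lemma~\ref{lem:structure}. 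In the first subcase, some $u\in V_1\cap V_2$ exists; set $X:=N(u)\cap U_1$ and $Y:=N(u)\cap U_2$, which are disjoint, and observe that the defining condition of $V_i$ gives $|X|,|Y|\geq(\tfrac12-\tfrac{\eps}{12})n$. Since every non-edge of $G[X,Y]$ lies between $U_1$ and $U_2$, the number of non-edges in $G[X,Y]$ is at most $\eps^3 n^2/144\leq 4\eps|X||Y|$, while $|X||Y|\geq n^2/9\geq\eps n^2/288$, yielding~\ref{cor:3structure:bip}. In the second subcase, $V_1\cap V_2=\emptyset$ but some $u\in V_0$ satisfies $\deg(u)>(1-\eps^2)n/2$; inequalities~\eqref{eq:structure:degV0V1} and~\eqref{eq:structure:degV0Vi} (specialised to $r=3$ and relabelled so that $\deg(u,V_1)\leq\deg(u,V_2)$) give $\deg(u,V_1)\geq\eps n/48$ and $\deg(u,V_2)\geq n/6$. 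I would take $X:=N(u)\cap V_1$ and $Y:=N(u)\cap V_2$, which are disjoint with $|X||Y|\geq\eps n^2/288$, and use property~\ref{def:partition:deg} of Definition~\ref{def:partition} inside $V_1\cup V_2$ (established by~\eqref{eq:structure:degVi}) to bound the number of non-neighbours of each $x\in X$ in $Y$ by $\eps|V_2|\leq 4\eps|Y|$, again yielding~\ref{cor:3structure:bip}.

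In the remaining case, $V_1\cap V_2=\emptyset$ and every $u\in V_0$ has $\deg(u)\leq(1-\eps^2)n/2$, which is precisely the configuration under which the proof of Lemma~\ref{lem:structure} verifies~\ref{def:partition:size} and~\ref{def:partition:deg} of Definition~\ref{def:partition} for the partition $V_0\dcup V_1\dcup V_2$, giving~\ref{cor:3structure:part}. The main calculational obstacle is the edge-density estimate in~\ref{cor:3structure:bip}: in the $V_1\cap V_2\neq\emptyset$ subcase it is inherited cleanly from the crude bound of $\eps^3 n^2/144$ non-edges crossing $(U_1,U_2)$, but in the $V_0$-subcase one must verify $|Y|\geq|V_2|/4$ (using $|Y|\geq n/6$ and $|V_2|\leq n/2+\eps^2 n/(4r^2)$ from~\eqref{eq:UiViSame}) in order to convert $\eps|V_2|$ into $4\eps|Y|$; this final comparison is where the hypothesis $\eps\leq 1/144$ is used.
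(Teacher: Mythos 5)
Your proof follows the paper's argument essentially verbatim: after disposing of the far-from-$\Tur{3}{n}$ case via stability, you split into the $V_1\cap V_2\neq\emptyset$ subcase and the high-degree-$V_0$ subcase exactly as the paper does to produce the dense neighbourhood $X\dcup Y$, with the remaining case yielding the $\eps$-close $2$-partition; the only (immaterial) variation is that in the first subcase the paper places $X,Y$ inside $V_1,V_2$ rather than $U_1,U_2$. Two small bookkeeping notes: the paper's stability parameter is $\eps^3/(16r^3)=\eps^3/432$ rather than $\eps^3/144$, and the upper bound on $|V_2|$ in the second subcase needs the disjointness of $V_1$ and $V_2$ in addition to~\eqref{eq:UiViSame} (or, more simply, $|V_2|\le n-|V_1|\le(1+\eps)n/2$ from~\eqref{eq:structure:degVi}), but neither affects the conclusion.
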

\begin{proof} 
  We follow the previous proof with $r=3$, using the same value for $\delta$.
  If~$G$ contains less than $\delta n^3$ triangles we obtain the three sets
  $V_0,V_1,V_2$ (as defined in~\eqref{eq:structure:Vi}). If these sets do not
  form a partition of $V(G)$, then there is a vertex $v$ in both $V_1$ and
  $V_2$. Then we let $X:=\neighbor(v)\cap V_1$ and $Y:=\neighbor(v)\cap V_2$.
  By~\eqref{eq:structure:degVi} we have $|X||Y|\geq
  (1-\eps)^2|V_1||V_2|\ge(1-\eps)^4n^2/4>\eps n^2/32$ because $\eps\le1/2$.
  Since each vertex of $X$ is adjacent to all but at most $\eps |V_2|$ vertices
  of $Y$ by~\eqref{eq:structure:degVi}, we also have $e(X,Y)\geq(1-4\eps)|X||Y|$
  as required.

  Hence we may assume that $V_0,V_1,V_2$ form a partition of $V(G)$. The only
  remaining barrier to $V_0,V_1,V_2$ being an $\eps$-close $2$-partition of $G$
  is the existence of a vertex $v$ in $V_0$ with degree more than
  $(1-\eps^2)\frac{n}{2}$. As in the previous proof, if this vertex exists we
  may without loss of generality presume by~\eqref{eq:structure:degV0V1} that it
  has at least $\eps n/48$ neighbours in $V_1$, and
  by~\eqref{eq:structure:degV0Vi} that it has at least $n/6$ neighbours in
  $V_2$. Again we let $X:=\neighbor(v)\cap V_1$, and $Y:=\neighbor(v)\cap V_2$,
  and get $|X||Y|\geq \eps n^2/288$ as required. Now since $|Y|>|V_2|/4$, and
  since every vertex in $X$ is adjacent to all but at most $\eps|V_2|$ vertices
  of $Y$, we have $e(X,Y)\geq (1-4\eps)|X||Y|$ as required.
\end{proof}

Our next lemma counts the number of graphs with an $\eps$-close
$(r-1)$-partition and a given number of non-crossing edges. In addition it estimates the number of
$r$-cliques in such a graph.

\begin{lemma}
\label{lem:counting}
  Let $\l\ge 0$ and $r\ge 3$ be integers, $0<\eps<1/(2r)$ be a real
  and $n\ge2r^3/\eps^2$ be an integer. Let~$\cG$
  be the family of all graphs on a fixed vertex set of size $n$ with
  $e(G)>\tur{r}{n}$ which have an $\eps$-close $(r-1)$-partition with exactly $\l$ non-crossing edges. Then
  \begin{enumerate}[label=\abc]
    \item\label{lem:counting:lPos}if $\l=0$ then $|\cG|=0$,
    \item\label{lem:counting:size} $|\cG|\le r^{5 \l n}$, and
    \item\label{lem:counting:Kr} every $G\in\cG$ contains at least
      $\l\big(\frac{n}{2r-2}\big)^{r-2}$ copies of $K_r$.
 \end{enumerate}
\end{lemma}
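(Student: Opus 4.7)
\textbf{Proof plan for Lemma~\ref{lem:counting}.}
For part~\ref{lem:counting:lPos}, we observe that $\ell=0$ forces the subgraph $G[V_1\cup\dots\cup V_{r-1}]$ to be $(r-1)$-partite, hence to have at most $\tur{r}{n-|V_0|}$ edges by Tur\'an's theorem, while~\ref{def:partition:deg} of Definition~\ref{def:partition} bounds the remaining $V_0$-incident edges by $|V_0|(1-\eps^2)\tfrac{r-2}{r-1}n$. Using the exact Tur\'an formula $\tur{r}{n}=\tfrac{r-2}{2(r-1)}n^2-\tfrac{s(r-1-s)}{2(r-1)}$ with $s=n\bmod(r-1)$, together with the hypotheses $|V_0|\le\eps^2 n$ and $n\ge 2r^3/\eps^2$, a routine computation yields $e(G)\le\tur{r}{n}$, contradicting the defining inequality $e(G)>\tur{r}{n}$ for $\cG$.

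For part~\ref{lem:counting:size}, the central claim is that $\ell$ dominates both the number $m_c$ of missing crossing edges and the size of $V_0$. Bounding the present crossing edges by $\sum_{1\le i<j\le r-1}|V_i||V_j|\le\tfrac{r-2}{2(r-1)}(n-|V_0|)^2$ via Cauchy--Schwarz and applying~\ref{def:partition:deg} of Definition~\ref{def:partition} once more, a comparison of $e(G)$ with $\tur{r}{n}$ precise up to an error $O(r)$ gives
\[ m_c + \tfrac12\eps^2\tfrac{r-2}{r-1}n|V_0|\ \le\ \ell + \tfrac{r-1}{8}, \]
so $m_c\le\ell+r$ and $|V_0|n\le 4(\ell+r)/\eps^2$. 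We then bound $|\cG|$ by enumerating the ordered partition ($r^n$ ways), the $\ell$ non-crossing edges ($n^{2\ell}$ ways), the at most $\ell+r$ missing crossing edges ($(\ell+r+1)\,n^{2(\ell+r)}$ ways), and the subset of $V_0$-incident edges ($2^{|V_0|n}$ ways). For $n\ge 2r^3/\eps^2$ and $\ell\ge 1$ the factor $r^{5\ell n}$ comfortably absorbs the product.

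For part~\ref{lem:counting:Kr}, we fix a non-crossing edge $uv\subset V_i$ and any bijection $\sigma\colon[r-2]\to[r-1]\setminus\{i\}$, and greedily construct copies $\{u,v,w_1,\dots,w_{r-2}\}$ of $K_r$ in $G$ with $w_j\in V_{\sigma(j)}$: after $u,v,w_1,\dots,w_{j-1}$ are chosen, the next vertex $w_j$ lies in the common neighbourhood of these $j+1$ vertices inside $V_{\sigma(j)}$, which by~\ref{def:partition:deg} of Definition~\ref{def:partition} has size at least $(1-(j+1)\eps)|V_{\sigma(j)}|\ge(1-(j+2)\eps)\tfrac{n}{r-1}\ge\tfrac{n}{2r-2}$, using $\eps\le 1/(2r)$. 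Each such $K_r$ has $uv$ as its \emph{only} non-crossing edge (all other vertex pairs lie in distinct parts), so summing over the $\ell$ non-crossing edges produces $\ell\bigl(n/(2r-2)\bigr)^{r-2}$ distinct copies of~$K_r$.

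The principal difficulty is the derivation of the bound on $m_c+\tfrac12\eps^2\tfrac{r-2}{r-1}n|V_0|$ in part~\ref{lem:counting:size}: we must use the exact Tur\'an formula, rather than its leading-order approximation, so that the error term is independent of~$n$; once those bounds are in hand, the enumeration step in~\ref{lem:counting:size} and the clique-counting step in~\ref{lem:counting:Kr} are routine.
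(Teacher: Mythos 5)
Your proof is correct and follows essentially the same strategy as the paper: greedy extension for part~\ref{lem:counting:Kr}, and for parts~\ref{lem:counting:lPos} and~\ref{lem:counting:size}, bounding $|V_0|$ and the number of crossing non-edges in terms of~$\ell$ via edge-counting and then enumerating. The only notable difference is cosmetic: the paper derives the single cleaner inequality $\ell\ge |V_0|+k+1$ (with $k$ the number of crossing non-edges), which gives part~\ref{lem:counting:lPos} immediately and the simple bound $2^{|V_0|n}\le 2^{\ell n}$ in the enumeration, whereas you derive $m_c+\tfrac12\eps^2\tfrac{r-2}{r-1}n|V_0|<\ell+\tfrac{r-1}{8}$ (which requires using $n\ge 2r^3/\eps^2$ again to control $2^{|V_0|n}$) and handle part~\ref{lem:counting:lPos} by a separate, also correct, application of Tur\'an's theorem to $G-V_0$.
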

\begin{proof}
In the following, let $G\in\cG$. We fix an $\eps$-close $(r-1)$-partition
$V_0,\ldots,V_{r-1}$ of $G$ with $\l$ non-crossing edges. Let the number of
crossing non-edges be $k$.

First we show~\ref{lem:counting:Kr}. Let $e$ be a non-crossing edge of
$G$. Without loss of generality, we may presume $e$ lies in $V_1$. We can
construct an $r$-clique using $e$ as follows: we choose any common neighbour
$v_2$ of $e$ in $V_2$, then a common neighbour $v_3$ of $e$ and $v_2$ in $V_3$, and so on. By
definition of an $\eps$-close $(r-1)$-partition, for each $2\leq i\leq r-1$, the
common neighbourhood of $e,v_2,\ldots,v_{i-1}$ in $V_i$ has size at least
$(1-i\eps)|V_i|>\frac12n/(r-1)$ because $\eps<1/(2r)$. It follows that $e$ lies
in at least $(n/(2r-2))^{r-2}$ copies of $K_r$ in $G$. Further, if $e'$ is a second
non-crossing edge of $G$, then no $r$-clique of $G$ using~$e'$ can be one of the
$r$-cliques through $e$ given by the above construction.
It follows that $G$ contains $\l(n/(2r-2))^{r-2}$ copies of~$K_r$.

Now we prove~\ref{lem:counting:lPos} and~\ref{lem:counting:size}. We first show that
\begin{equation}
\label{eq:counting:lk}
  \l\geq |V_0|+k+1\,.
\end{equation}
If $V_0=\emptyset$, then we have $\tur{r}{n}+1\leq e(G)\leq\tur{r}{n}+\l-k$, and
therefore $\l\geq |V_0|+k+1$. If $V_0\neq\emptyset$ on the other hand, then,
since every vertex in $V_0$ has degree at most $(1-\eps^2)\frac{r-2}{r-1}n$,
we have 
\[\tur{r}{n}+1\leq e(G)\leq
(1-\eps^2)\frac{r-2}{r-1}n|V_0|+\left(\frac{n-|V_0|}{r-1}\right)^2\binom{r-1}{2}+\l-k\,.\]
Using the facts $|V_0|\leq\eps^2n$ and
$\big(\frac{n}{r-1}\big)^2\binom{r-1}{2}\leq \tur{r}{n}+r^2$, we infer
\begin{equation*}\begin{split}
  \tur{r}{n} & +1 \\
  &\leq (1-\eps^2)\tfrac{r-2}{r-1}n|V_0|
    +\big(\tfrac{n}{r-1}\big)^2\tbinom{r-1}{2}
    -\tfrac{r-2}{r-1}n|V_0|+\tfrac{(r-2)}{2(r-1)}|V_0|^2+\l-k \\
  &
  \leq\tur{r}{n}+r^2-\eps^2\tfrac{r-2}{r-1}n|V_0|+\eps^2\tfrac{r-2}{2(r-1)}n|V_0|+\l-k
 \\
  &=\tur{r}{n}+r^2-\eps^2\tfrac{r-2}{2(r-1)}n|V_0|+\l-k\,.
\end{split}\end{equation*}
It follows from $n\ge2r^3/\eps^2$ that
$\eps^2\frac{r-2}{2(r-1)}n|V_0|\geq r^2+|V_0|$, and so we again obtain
$\l\geq |V_0|+k+1$.

Now, if $G\in\cG$ exists, then~\eqref{eq:counting:lk} clearly implies $\l>0$,
proving~\ref{lem:counting:lPos}.
It remains to show~\ref{lem:counting:size}. We can construct any graph $G$ in
$\cG$ as follows. 
We choose $k\in\{0,\ldots,\ell-1\}$. We partition  $[n]$ into
$r$ sets $V_0,\ldots,V_{r-1}$ such that $V_0$
satisfies~\eqref{eq:counting:lk}. For each pair of vertices intersecting $V_0$, we
choose whether or not to make it an edge of $G$; there are at
most $2^{|V_0|n}\le 2^{\ell n}$ such choices.
Then we choose  $k$ pairs of vertices crossing the partition to be
non-edges of $G$, and make all other crossing pairs edges of $G$. Finally, we
choose $\l$ pairs of vertices within partition classes to be the $\l$
non-crossing edges of $G$.
The total number of choices in this process is at most
\[
  \sum_{0\le k\le\l-1}
  r^n2^{\ell n}\binom{\binom{n}{2}}{k}\binom{\binom{n}{2}}{\l}
  \leByRef{eq:counting:lk} \l r^n2^{\l n}n^{2\l+2\l}\leq r^{5\l n}\,,
\]
as required.
\end{proof}

With these tools at hand we can proceed to the proof of
Theorem~\ref{thm:random:exact}. For a binomially distributed random variable~$X$
we will use the following Chernoff bound which can be found, e.g., in~\cite
[Theorem~2.1]{JaLuRu:Book}. For each $\gamma\in(0,\frac13)$ we have
\begin{equation}\label{eq:chernoff}
  \Prob\big(X\le(1-\gamma)\Exp X\big)\le\exp(-\gamma^2\Exp X/2)\;.
\end{equation}

\begin{proof}[Proof of the $1$-statements of Theorem~\ref{thm:random:exact}]
  We shall first prove the case $r=3$ and then turn to the case $r>3$.  In both
  cases we will consider the class~$\cG_r$ of all $n$-vertex graphs $G$ with
  $e(G)>\tur{r}{n}$.  In the case $r=3$, $\cG_3$ can be written as the union of
  three sub-classes $\cG_\mathrm{A}$, $\cG_\mathrm{B}$, and $\cG_\mathrm{C}$
  defined by the properties in~\ref{cor:3structure:tri},
  \ref{cor:3structure:bip}, and~\ref{cor:3structure:part} of
  Corollary~\ref{cor:3structure}, respectively. Similarly, for $r>3$
  Lemma~\ref{lem:structure} allows us to write
  $\cG_r=\cG_\mathrm{D}\cup\cG_\mathrm{E}$, where the graphs~$\cG_{\mathrm{D}}$
  and~$\cG_\mathrm{E}$ enjoy properties given by
  Lemma~\ref{lem:structure}\ref{lem:structure:Kr} and
  Lemma~\ref{lem:structure}\ref{lem:structure:part}, respectively.  We will
  prove that for each of these sub-classes a.a.s.\ the random hypergraph
  $\cR^{(r)}(n,p)$ with~$p$ as required detects all graphs in this
  sub-class. The result then follows from the union bound.

  \medskip

  \noindent{\sl \underline{Case $r=3$:}}
  Let $p>1/2$ be fixed and set
  \begin{equation*}
    \eps:=\min\Big\{
    \frac{1}{144},\frac{p}{8},\frac{2p-1}{4p+3}\Big\}\,.
  \end{equation*}
  Let $\delta>0$ be guaranteed by Corollary~\ref{cor:3structure} for this~$\eps$. Observe that this choice
  of~$\eps$ and~$n$ allows the application of
  Corollary~\ref{cor:3structure}. Further, let
  $\cG_3=\cG_\mathrm{A}\cup\cG_\mathrm{B}\cup\cG_\mathrm{C}$ be as defined
  above.  We will now show for each of the graph classes $\cG_\mathrm{A}$,
  $\cG_\mathrm{B}$, and $\cG_\mathrm{C}$ that a.a.s.\ $\cR^{(3)}(n,p)$ detects
  all their members.

  \smallskip

  Suppose a graph $G\in\cG_\mathrm{A}$ is given. Then
  Corollary~\ref{cor:3structure}\ref{cor:3structure:tri} the graph $G$ contains
  at least $\delta n^3$ triangles. The probability that $\cR^{(3)}(n,p)$ does
  not detect $G$ is at most \[(1-p)^{\delta n^3}\leq e^{-p\delta n^3}\leq
  e^{-\delta n^3/2}\;,\] and since $|\cG_\mathrm{A}|<2^{\binom{n}{2}}$, applying
  the union bound, the probability that there is a graph in $\cG_\mathrm{A}$
  which $\cR^{(3)}(n,p)$ does not detect is at most
  \[2^{\binom{n}{2}}e^{-\delta n^3/2}\;,\]
  which tends to zero as $n$ tends to infinity.

  \smallskip

  Recall that $\cG_\mathrm{B}$ is the sub-class of $\cG_3$ with graphs in which
  there is a vertex $u$ and disjoint set $X,Y\subset \neighbor(u)$ with both
  $|X||Y|\geq\eps n^2/288$ and $e(X,Y)\geq (1-4\eps)|X||Y|$.
  Suppose that a $3$-uniform $n$-vertex hypergraph~$\cH$ has the property that
  for every vertex $v$ and disjoint sets $W$ and $Z$ with $|W||Z|\geq \eps
  n^2/288$, there are more than $4\eps|W||Z|$ hyperedges of $\cH$, each consisting of
  $v$, a vertex of $W$, and a vertex of $Z$. Then, clearly for any
  $G\in\cG_\mathrm{B}$ the hypergraph $\cH$ detects $G$. Hence it remains to
  show that a.a.s.\ $\cR^{(3)}(n,p)$ has this property.

  Given one vertex $v$ and pair of disjoint vertex sets $X$ and $Y$ of
  $\cR^{(3)}(n,p)$ with $|X||Y|\ge \eps n^2/288$ the expected size of
  $E\big(\link_{\cR^{(3)}(n,p)}(v)\big)\cap(X\times Y)$ in $\cR^{(3)}(n,p)$ is
  $p|X||Y|$. Using the Chernoff bound~\eqref{eq:chernoff}, the probability that we have
  \[e\big(\link_{\cR^{(3)}(n,p)}(v)\cap(X\times Y)\big)<4\eps|X||Y|\leq
  p|X||Y|/2\] is at most $e^{-p|X||Y|/8}\le e^{-\eps n^2/5000}$. By the union bound, the probability that
  there exists any such vertex and pair of disjoint subsets in $\cR^{(3)}(n,p)$ is
  at most
  \[n2^n2^ne^{-\eps n^2/5000}\] 
  which tends to zero as $n$ tends to infinity.

  \smallskip

  Finally, $\cG_\mathrm{C}$ is the class of $n$-vertex graphs $G\in\cG_3$ which
  possess an $\eps$-close $2$-partition $V_0\dcup V_1\dcup V_2$.  Since
  $e(G)\ge\tur{r}{n}+1$ there is at least one non-crossing edge $e$ in this
  partition by Lemma~\ref{lem:counting}\ref{lem:counting:lPos}. Without loss of
  generality, we may presume $e$ lies in $V_1$. Then the common neighbourhood of
  $e$ contains more than $(1-2\eps)|V_2|\geq (1-3\eps)\frac{n}{2}$ vertices. In
  particular, if $\cR^{(3)}(n,p)$ has the property that every pair of vertices
  is in at least $(1+3\eps)\frac{n}{2}$ hyperedges, then $\cR^{(3)}(n,p)$
  detects every graph in $\cG_\mathrm{C}$. We will show that a.a.s.\
  $\cR^{(3)}(n,p)$ has this property.

  Given one pair of vertices $u,v$, we
  have $$\Exp\big(e(\link_{\cR^{(3)}(n,p)}(u,v))\big)=p(n-2)\;.$$ Using the fact
  that $\eps\leq\frac{2p-1}{4p+3}$ we note that \[ (1+3\eps)\frac{n}{2}
  \leq\Big(1+3\frac{2p-1}{4p+3}\Big)\frac{n}2 =\Big(1-2\frac{2p-1}{4p+3}\Big)pn
  < (1-\eps)p(n-2)\;,
  \] for large enough $n$.
  The Chernoff bound~\eqref{eq:chernoff}  then gives
  \begin{align*}
    \Prob&\left(e(\link_{\cR^{(3)}(n,p)}(u,v))\le (1+3\eps)\frac{n}{2}\right)\le\\
    &\Prob\left(e(\link_{\cR^{(3)}(n,p)}(u,v))\le (1-\eps)p(n-2)\right)\le e^{-\eps^2p(n-2)/2}\;.
  \end{align*}
  By the union bound, the probability that there exists any such pair of
  vertices in $\cR^{(3)}(n,p)$ is at most $\binom{n}{2}e^{-\eps^2p(n-2)/2}$,
  which tends to zero as $n$ tends to infinity.

  \medskip

  \noindent{\sl \underline{Case $r>3$:}}
  Let $\eps:=1/(16r^2)$, and let $\delta>0$ be the positive constant guaranteed
  by Lemma~\ref{lem:structure} for this $\eps$. 
  Let
  $\cG_r=\cG_\mathrm{D}\cup\cG_\mathrm{E}$ be classes of $n$-vertex graphs
  satisfying~\ref{lem:structure:Kr} and~\ref{lem:structure:part} of
  Lemma~\ref{lem:structure}, respectively.  Set
  \begin{equation*} 
    C:=\max\left\{\frac1\delta,6r(2r-2)^{r-2}\right\}\,,
    \qquad\text{and let}\quad p\geq Cn^{3-r}\,.
  \end{equation*}
  Again, we will prove
  that a.a.s.\ $\cR^{(r)}(n,p)$ detects all graphs in
  $\cG_\mathrm{D}$ and $\cG_\mathrm{E}$.

  \smallskip

  The class $\cG_\mathrm{D}$ contains the graphs from $\cG_r$ in which there is
  a vertex contained in at least $\delta n^{r-1}$ copies of $K_r$. Given one
  such graph $G$, the probability that $G$ is not detected by $\cR^{(r)}(n,p)$
  is at most
  \[(1-p)^{\delta n^{r-1}}<e^{-Cn^{3-r}\delta n^{r-1}}=e^{-C\delta n^2}\leq
  e^{-n^2}\,,\] and since there are at most $2^{\binom{n}{2}}$ graphs in
  $\cG_\mathrm{D}$, the probability that there is a graph in $\cG_\mathrm{D}$
  undetected by $\cR^{(r)}(n,p)$ is at most
  \[2^{\binom{n}{2}}e^{-n^2}\;,\] which tends to zero as $n$ tends
  to infinity.

  \smallskip

  It remains to consider the class $\cG_\mathrm{E}$ of graphs $G\in\cG_r$ with
  $\eps$-close $(r-1)$-partition. For~$1\leq\l\leq\binom{n}{2}$ let
  $\cG_\mathrm{E}(\l)\subseteq \cG_\mathrm{E}$ be the class of graphs that have
  an $\eps$-close $(r-1)$-partition with exactly $\l$ non-crossing edges. By Lemma~\ref{lem:counting}\ref{lem:counting:lPos} we have
  \begin{equation}\label{eq:unioniseverything}
  \bigcup_{1\le\l\le\binom{n}{2}}\cG_\mathrm{E}(\l)=\cG_\mathrm{E}\,.
  \end{equation}
  Now fix~$\l\in\{1,\ldots,\binom{n}{2}\}$.
  Lemma~\ref{lem:counting}\ref{lem:counting:size} asserts that
  $|\cG_\mathrm{E}(\l)|\le r^{5\l n}$.  Moreover, each graph in
  $\cG_\mathrm{E}(\l)$ contains at least $\l(n/(2r-2))^{r-2}$ copies of $K_r$ by
  Lemma~\ref{lem:counting}\ref{lem:counting:Kr}. Hence, by the union bound, the
  probability that $\cR^{(r)}(n,p)$ fails to detect at least one graph in
  $\cG_\mathrm{E}(\l)$ is at most
  \begin{equation*}\begin{split}
      r^{5\l n}(1-p)^{\left(\frac{n}{2r-2}\right)^{r-2}\l}
      & <r^{5\l n}\exp\Big({-Cn^{3-r}\l\Big(\frac{n}{2r-2}\Big)^{r-2}}\Big) \\
      & \leq r^{5\l n}e^{-6r\l n}
      <e^{-\l n}\,.
  \end{split}\end{equation*} 
  Finally, applying the union bound in conjunction
  with~\eqref{eq:unioniseverything}, we conclude that $\cR^{(r)}(n,p)$ detects
  all graphs in~$\cG_\mathrm{E}$ with probability at least $1-\binom{n}{2}e^{-n}$, which tends to one as $n$ tends to infinity.
\end{proof}

\section{Tur\'annical hypergraphs for random graphs}
\label{sec:random-random}

In this section we prove Theorem~\ref{thm:TurForGnp}.  For this purpose we
shall use the machinery developed by Schacht~\cite{Schacht:KLR} for proving
Theorem~\ref{thm:rr:mathias1}. Conlon and Gowers~\cite{ConGow:KLR} obtained independently (using different methods) a result very similar to Schacht's. While either result is equally suited for proving~\ref{thm:TurForGnp} we follow notation introduced in~\cite{Schacht:KLR}. Schacht formulates a powerful abstract
result, a so-called \emph{transference theorem} (Theorem~3.3 in~\cite{Schacht:KLR}; see also Theorem~4.5 in~\cite{ConGow:KLR}),
which is phrased in the language of hypergraphs and gives very general conditions under which a
result from extremal combinatorics may be transferred to an analogue for
sparse random structures. Actually, Theorem~\ref{thm:rr:mathias1} mentioned
above is only one of several results where the transference theorem applies.
Schacht, and Conlon and Gowers, give further applications to transfer the
multidimensional Szemer\'edi theorem, a result on Schur's equation, and others. Here we are
interested in a transference of Theorem~\ref{thm:random:approx}.

Below we will state a special version of Schacht's transference theorem,
tailored to our situation. For formulating this theorem we need some
definitions. We remark that in these definitions we slightly deviate from
Schacht's setting. More precisely, the transference theorem uses a certain
sequence of hypergraphs which encode the classical extremal problem under
consideration. In the case of Tur\'an's problem for~$K_r$, the $n$-th
hypergraph in this sequence has vertex set $E(K_n)$ and a hyperedge for every
$\binom{r}{2}$-tuple of elements from $E(K_n)$ which form a copy of~$K_r$
in~$K_n$ in Schacht's setting. Instead, we shall work with $r$-uniform
hypergraphs~$\cH_n$ on vertex set $V(K_n)$, making use of the fact that a
copy of $K_r$ is uniquely identified by its vertices.  The corresponding
modifications of the definitions and of the transference theorem are
straightforward.

The transference theorem requires the sequence of hypergraphs to satisfy
two conditions. The first one is a requirement upon the extremal
problem to be transferred, namely, that it has a certain `super-saturation' property (similar
to the one given in Theorem~\ref{thm:TuranDensity}).
The following definition makes this precise.

\begin{definition}[$(\alpha,\eps,\zeta)$-dense] 
\label{def:dense}
  Let $\mathbf{H}=(\cH_n)_{n\in\mathbb{N}}$ be a sequence of $n$-vertex
  $r$-uniform hypergraphs, $\alpha\geq 0$ and $\eps,\zeta>0$ be constants. We
  say $\mathbf{H}$ is \emph{$(\alpha,\eps,\zeta)$-dense} if the following is
  true. There exists $n_0$ such that for every $n\geq n_0$
  and every graph $G$ on the vertex set $V(\cH_n)$ with at least
  $(\alpha+\eps)\binom{n}{2}$ edges, the number of copies of $K_r$ in $G$ induced by hyperedges of $\cH_n$
  is at least $\zeta e(\cH_n)$.
\end{definition}

The second condition 
determines the sparseness of a random graph to which one may transfer the
extremal result.
Given an $r$-uniform hypergraph $\cH$, a graph $G$ on the same
vertex set, and a pair of distinct vertices $u$ and $v$ of $V(G)$, we let $\deg_i(u,v,G)$
be the number of hyperedges of $\cH$ containing $u$, $v$ and at least $i$ edges of
$G$, not counting the possible edge $uv$. If $u=v$ we let $\deg_i(u,v,G):=0$. The hypergraph $\cH$ itself is suppressed from the notation as it will be clear from the context.
We set 
\[\mu_i(\cH,q):=\mathbb{E}\Big[\sum\nolimits_{u,v}\deg_i^2\big(u,v,\Gnq\big)\Big]\,,\]
where the expectation is taken over the space of random graphs $\Gnq$.

\begin{definition}[$(K,\mathbf{q})$-bounded]\label{def:bounded}
  Let $\mathbf{H}=(\cH_n)_{n\in\mathbb{N}}$ be a sequence of $n$-vertex
  $r$-uniform hypergraphs, $\mathbf{q}=(q_n)_{n\in\mathbb{N}}$ be a
  sequence of probabilities, and $K\geq 1$ be a constant. We say that
  $\mathbf{H}$ is \emph{$(K,\mathbf{q})$-bounded} if the following holds.
  For each $i\in[\binom{r}{2}-1]$ there exists $n_0$ such that for each
  $n\geq n_0$ and $q\geq q_n$ we have
  \[\mu_i(\cH_n,q)\leq Kq^{2i}\cdot\frac{e(\cH_n)^2}{n^2}\,.\]
\end{definition}

We can now state (a special case of) Schacht's transference theorem.

\begin{theorem}[transference theorem, Schacht~\cite{Schacht:KLR}]\label{thm:rr:mathias2}
  For all $r\geq 3$, $K\ge 1$, $\delta>0$, $\zeta>0$ and
  $(\omega_n)_{n\in\NATS}$ with $\omega_n\to\infty$ as $n\to\infty$, there
  exists $C>1$ such that the following holds. Let $\eps:=8^{-r(r-1)/2}\delta$,
  and let $\mathbf{H}=(\cH_n)_{n\in\mathbb{N}}$ be a sequence of $n$-vertex
  $r$-uniform hypergraphs which is $(\frac{r-2}{r-1},\eps,\zeta)$-dense.  Let
  $\mathbf{q}=(q_n)_{n\in\mathbb{N}}$ be a sequence of probabilities with
  $q_n^{r(r-1)/2}\cdot e(\cH_n)\rightarrow\infty$ and $Cq_n<1/\omega_n$ such
  that $\mathbf{H}$ is $(K,\mathbf{q})$-bounded.
  
  Then the following holds a.a.s.\ for $G=\Gnq[Cq_n]$. Every subgraph of~$G$
  with at least $(\frac{r-2}{r-1}+\delta)\cdot e(G)$ edges contains an
  $r$-clique induced by a hyperedge of $\cH_n$.
\end{theorem}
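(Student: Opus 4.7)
The plan is to prove the two statements separately: the $0$-statement by a direct first-moment/deletion argument, and the $1$-statement by applying Schacht's transference theorem (Theorem~\ref{thm:rr:mathias2}) to the random hypergraph $\cR^{(r)}(n,p)$ itself after a.a.s.\ verifying its hypotheses.

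For the $0$-statement, sample $\cR^{(r)}(n,p)$ and $\Gnq$ independently with $p\leq c\vartheta_q(n)$, and let $X$ denote the number of hyperedges of $\cR^{(r)}(n,p)$ that induce a copy of $K_r$ in $\Gnq$. A short calculation using $\vartheta_q(n)=n^{2-r}q^{-(r-2)(r+1)/2}$ yields $\Exp X=\Theta(c\,qn^2)$. Since $\eps<1/(r-2)$, the slack $e(\Gnq)-(1+\eps)\tfrac{r-2}{r-1}e(\Gnq)=\tfrac{1-\eps(r-2)}{r-1}e(\Gnq)$ is of order $qn^2$ with a fixed positive multiplier; combining a Chebyshev second-moment bound on $X$ (whose expansion splits by the number of vertices two $K_r$-copies share) with a Chernoff concentration of $e(\Gnq)$ shows that, for $c$ small enough, one may delete one edge per bad $K_r$ and still leave a subgraph of $\Gnq$ with more than $(1+\eps)\tfrac{r-2}{r-1}e(\Gnq)$ edges a.a.s., certifying that $\cR^{(r)}(n,p)$ is not $\eps$-Tur\'annical for $\Gnq$.

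For the $1$-statement, set $\delta:=\eps\tfrac{r-2}{r-1}$ and $\eps_0:=8^{-r(r-1)/2}\delta$, take $p_n\geq C\vartheta_q(n)$, and apply Theorem~\ref{thm:rr:mathias2} to the random sequence $\mathbf{H}=(\cH_n)$ given by $\cH_n:=\cR^{(r)}(n,p_n)$, together with $\mathbf{q}'=(q_n/C_*)$, where $C_*$ is the constant produced by Theorem~\ref{thm:rr:mathias2}. I would check that a.a.s.\ over $\cH_n$ all three hypotheses of the transference theorem hold. The $(\tfrac{r-2}{r-1},\eps_0,\zeta)$-density follows from Theorem~\ref{thm:TuranDensity}: any fixed graph $G$ on $[n]$ with more than $(\tfrac{r-2}{r-1}+\eps_0)\binom{n}{2}$ edges has at least $\delta_0 n^r$ copies of $K_r$, of which the number lying in $\cR^{(r)}(n,p_n)$ is binomial with mean $\geq \delta_0 p_n n^r$; a Chernoff tail $\exp(-\Omega(p_nn^r))=\exp(-\Omega(n^2))$ beats the union bound over the $2^{\binom{n}{2}}$ candidate graphs, and the constant $\zeta$ is extracted by comparing to the Chernoff estimate $e(\cH_n)\leq 2p_n\binom{n}{r}$. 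The divergence $(q'_n)^{r(r-1)/2}e(\cH_n)\to\infty$ is a direct computation yielding $\Theta(n^2 q_n)$ once $p_n\geq C\vartheta_q(n)$, which tends to infinity because $\vartheta_q(n)\leq 1$ forces $n^2q_n\to\infty$.

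The main technical obstacle is verifying $(K,\mathbf{q}')$-boundedness a.a.s.\ over $\cH_n$. Expanding $\mu_i(\cH_n,q)$ as a sum over ordered pairs of hyperedges of $\cH_n$ through a pair $u,v$, grouped by the number of vertices they share beyond $\{u,v\}$, mirrors the calculation carried out in~\cite{Schacht:KLR} for the complete $r$-uniform hypergraph; passing to $\cR^{(r)}(n,p_n)$ inserts an extra factor $p_n^2$, which is exactly matched by the change in the target quantity $e(\cH_n)^2/n^2=\Theta(p_n^2 n^{2r-2})$ a.a.s. To turn the expected estimates into an a.a.s.\ uniform bound for all $q\geq q'_n$, I would exploit the monotonicity of $\mu_i(\cH_n,q)$ in $q$ to reduce to a logarithmic grid of $q$-values, and then apply Markov's inequality to each of the finitely many $\mu_i(\cH_n,q)$ along the grid. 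Once the three conditions hold a.a.s.\ over $\cH_n$, Theorem~\ref{thm:rr:mathias2} applied with $G=\Gnq[C_*q'_n]=\Gnq$ a.a.s.\ produces a $K_r$ on a hyperedge of $\cH_n$ inside every subgraph of $\Gnq$ with more than $(1+\eps)\tfrac{r-2}{r-1}e(\Gnq)$ edges, which is the desired $\eps$-Tur\'annicality.
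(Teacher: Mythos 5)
You have proved the wrong statement. Theorem~\ref{thm:rr:mathias2} is the transference theorem itself: a purely conditional, deterministic-input statement asserting that \emph{for any} sequence $\mathbf{H}=(\cH_n)$ of hypergraphs satisfying the $(\frac{r-2}{r-1},\eps,\zeta)$-denseness and $(K,\mathbf{q})$-boundedness hypotheses, a.a.s.\ every sufficiently dense subgraph of $\Gnq[Cq_n]$ contains a $K_r$ on a hyperedge of $\cH_n$. It has no ``$0$-statement'' and no ``$1$-statement'', it does not mention the random hypergraph $\cR^{(r)}(n,p)$, and above all it cannot be established by ``applying Schacht's transference theorem (Theorem~\ref{thm:rr:mathias2})'' --- that is circular. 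What you have actually written is, almost step for step, the paper's proof of Theorem~\ref{thm:TurForGnp}: the first-moment deletion argument for the $0$-statement, the verification of denseness via Theorem~\ref{thm:TuranDensity} plus a Chernoff--union bound, the divergence of $q_n^{r(r-1)/2}e(\cH_n)$, and the boundedness check split by the overlap size of two hyperedges all appear in Section~\ref{sec:random-random} in service of Theorem~\ref{thm:TurForGnp}, which \emph{consumes} Theorem~\ref{thm:rr:mathias2} (via Corollary~\ref{cor:rr}) as a black box.

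A genuine proof of Theorem~\ref{thm:rr:mathias2} would have to reproduce Schacht's argument from~\cite{Schacht:KLR} (the multiple-exposure induction, with the denseness hypothesis entering only in the base case at $\eps=8^{-r(r-1)/2}\delta$), adapted to the vertex-set formulation used here. The paper itself does not reprove it either --- it cites \cite[Theorem~3.3]{Schacht:KLR} and only remarks why the explicit quantification of $\eps$, $\zeta$ and the independence of $C$ from $\mathbf{H}$ and $\mathbf{q}$ can be read off from that proof. So the concrete gap is: none of the content of your proposal bears on the statement you were asked to prove, and the one place where you would need the substance of the transference machinery you instead invoke the theorem itself.
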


We remark that the quantification in this theorem and the
$(\alpha,\eps,\zeta)$-dense\-ness condition given here is not the same as
in~\cite{Schacht:KLR} (in fact, in~\cite{Schacht:KLR} the two parameters $\eps$
and $\zeta$ are not made explicit in the concept of $\alpha$-dense\-ness used
in~\cite{Schacht:KLR}). The statement in~\cite{Schacht:KLR} is certainly cleaner, but for
our purposes it is necessary that we check the denseness condition only for a
special $\eps$ (as opposed to all $\eps>0$, which is necessary for
the original definition of $\alpha$-denseness), and that the constant $C$ does
not depend on the sequences $\mathbf{H}$ or $\mathbf{q}$. That
Theorem~\ref{thm:rr:mathias2} is valid, however, follows easily from the proof
of~\cite[Theorem~3.3]{Schacht:KLR}. This can be checked as follows. It is
clearly stated in the proof of~\cite[Theorem~3.3]{Schacht:KLR} that the
requirement of $(\alpha,\eps,\zeta)$-denseness is necessary only once, namely
for the base case of the induction performed there, with the value
$\eps=8^{-r(r-1)/2}\delta$ given above. The values of the various constants are
also explicitly stated in the proof. In particular, the value of $C$ does indeed
depend only upon $r$, $K$, $\delta$ and $\zeta$ as claimed.

\smallskip

To prove the $1$-statement of Theorem~\ref{thm:TurForGnp}, we need to further
modify the setting from~\cite{Schacht:KLR}: we do not have a sequence of fixed
hypergraphs, but instead a sequence of random objects $\cR^{(r)}(n,p_n)$. We
describe how to modify the above definitions appropriately, and explain why the transfer result we require, Corollary~\ref{cor:rr}, follows from Theorem~\ref{thm:rr:mathias2}.

\begin{definition}[$(\alpha,\eps,\zeta)$-dense for random hypergraphs] 
  Let $\mathbf{p}=(p_n)_{n\in\mathbb{N}}$ be a sequence of probabilities, and
  let $\alpha,\eps,\zeta\geq 0$ be constants. We say the random hypergraph
  $\cR^{(r)}(n,p_n)$ is \emph{a.a.s.\ $(\alpha,\eps,\zeta)$-dense} if a.a.s.\
  for $\cR_n=\cR^{(r)}(n,p_n)$, the following is true. For every $n$-vertex
  graph $G$ on $[n]$ with at least $(\alpha+\eps)\binom{n}{2}$ edges, the number
  of copies of $K_r$ in $G$ induced by hyperedges of $\cR_n$ is at least $\zeta
  e(\cR_n)$.
\end{definition}
Next, we modify the definition of boundedness.

\begin{definition}[$(K,\mathbf{q})$-bounded for random hypergraphs]
  Let $\mathbf{p}=(p_n)_{n\in\mathbb{N}}$ and
  $\mathbf{q}=(q_n)_{n\in\mathbb{N}}$ be sequences of probabilities and $K\geq 1$ be a constant. We say that the random hypergraph
  $\cR^{(r)}(n,p_n)$ is \emph{a.a.s.\ $(K,\mathbf{q})$-bounded} if the following
  holds a.a.s.\ for $\cR_n=\cR^{(r)}(n,p_n)$.
  For each $i\in[\binom{r}{2}-1]$ and $\tilde q\geq q_n$, we have
  \[\mu_i(\cR_n,\tilde q)\leq K\tilde q^{2i}\cdot\frac{e(\cR_n)^2}{n^2}\,.\]
\end{definition}

Using these definitions we obtain the following transference result using
random hypergraphs as a corollary
to Theorem~\ref{thm:rr:mathias2}.

\begin{corollary}\label{cor:rr} 
  Given $r\geq 3$, $K\geq 1$, $\delta>0$, $\zeta>0$ and
  $(\omega_n)_{n\in\NATS}$ with $\omega_n\to\infty$ as $n\to\infty$, let
  $\eps:=\delta/8^{\binom{r}{2}}$. There exists $C>1$ such that the following is
  true. Let $\mathbf{p}=(p_n)_{n\in\mathbb{N}}$ be a sequence of probabilities
  such that $\cR^{(r)}(n,p_n)$ is a.a.s.\
  $\big(\frac{r-2}{r-1},\eps,\zeta\big)$-dense. Let
  $\mathbf{q}=(q_n)_{n\in\mathbb{N}}$ be a sequence of probabilities such
  that $Cq_n<1/\omega_n$, such that for every integer~$L$, a.a.s.\
  $q_n^{r(r-1)/2}\cdot e\big(\cR^{(r)}(n,p_n)\big)>L$, and such that
  $\cR^{(r)}(n,p_n)$ is a.a.s.\ $(K,\mathbf{q})$-bounded.
  Then for $G=\Gnq[Cq_n]$ and $\cR_n=\cR^{(r)}(n,p_n)$ a.a.s.\ $\cR_n$ is
  $\delta$-Tur\'annical for $G$.
\end{corollary}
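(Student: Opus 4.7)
The plan is to deduce Corollary~\ref{cor:rr} from the deterministic transference Theorem~\ref{thm:rr:mathias2} by decoupling the two sources of randomness. The key observation is that $\cR_n:=\cR^{(r)}(n,p_n)$ and $G:=\Gnq[Cq_n]$ are independent, so one may condition on $\cR_n$ and then work with the conditional distribution of $G$ on the event that $\cR_n$ happens to possess the deterministic properties demanded by Theorem~\ref{thm:rr:mathias2}.

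To set up, let $\eps:=\delta/8^{\binom{r}{2}}$ and let $C>1$ be the constant produced by Theorem~\ref{thm:rr:mathias2} applied with parameters $r,K,\delta,\zeta$ and $(\omega_n)$. It is essential that $C$ depends only on $r,K,\delta,\zeta$ (in particular not on the sequence $\mathbf{H}$ or the particular $\mathbf{q}$), since we shall apply the theorem only after $C$ has been fixed. For each $n$ let $\mathcal{A}_n$ be the event, measurable with respect to $\cR_n$ alone, that the realisation of $\cR_n$ simultaneously (i) is $(\tfrac{r-2}{r-1},\eps,\zeta)$-dense, (ii) is $(K,\mathbf{q})$-bounded, and (iii) satisfies $q_n^{r(r-1)/2}e(\cR_n)>L_n$ for a prescribed sequence $L_n\to\infty$. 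The three a.a.s.\ hypotheses of the corollary combine to give $\Prob(\mathcal{A}_n)\to 1$.

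Argue now by contradiction: suppose the conclusion fails. Then there exist $\eta>0$ and a subsequence $(n_k)$ along which $\Prob(\cR_{n_k}\text{ is }\delta\text{-Tur\'annical for }G_{n_k})\leq 1-\eta$. By independence of $\cR_n$ and $G$ and Fubini, and since $\Prob(\mathcal{A}_{n_k})\to 1$, for each sufficiently large $k$ we may choose a deterministic realisation $\cH_{n_k}\in\mathcal{A}_{n_k}$ with
\[
\Prob\bigl(\cH_{n_k}\text{ is not }\delta\text{-Tur\'annical for }\Gnq[Cq_{n_k}]\bigr)\geq \eta/2.
\]
Extend $(\cH_{n_k})_k$ to a full sequence $\mathbf{H}=(\cH_n)_n$ arbitrarily on the complement. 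Along $(n_k)$ the sequence $\mathbf{H}$ satisfies both the $(\tfrac{r-2}{r-1},\eps,\zeta)$-denseness and $(K,\mathbf{q})$-boundedness hypotheses of Theorem~\ref{thm:rr:mathias2}, and $q_{n_k}^{r(r-1)/2}e(\cH_{n_k})\to\infty$. Applying Theorem~\ref{thm:rr:mathias2} now yields that a.a.s.\ over $G=\Gnq[Cq_n]$ (read along $(n_k)$) every subgraph of $G$ with at least $(\tfrac{r-2}{r-1}+\delta)e(G)$ edges contains a $K_r$ on a hyperedge of $\cH_n$, which contradicts the lower bound $\eta/2$ extracted above.

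The main obstacle is the careful passage from a.a.s.\ hypotheses on the random hypergraph $\cR_n$ to a sure hypothesis needed to invoke Theorem~\ref{thm:rr:mathias2}. The enabling facts are the independence of $\cR_n$ and $\Gnq[Cq_n]$, together with the uniformity of the constant $C$ with respect to the hypergraph sequence. A minor technicality is that Theorem~\ref{thm:rr:mathias2} is formulated for sequences indexed by all $n$ while our extracted sequence has the good properties only along $(n_k)$; but the conclusion of the theorem is an asymptotic (a.a.s.) statement in $n$, so reading it along the subsequence is sufficient to yield the desired contradiction.
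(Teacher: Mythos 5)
Your approach is essentially the same as the paper's: both decouple the two sources of randomness using the independence of $\cR^{(r)}(n,p_n)$ and $\Gnq[Cq_n]$, fix a deterministic hypergraph sequence enjoying the three properties (denseness, boundedness, growth of $q_n^{r(r-1)/2}e(\cR_n)$), and invoke the deterministic transference theorem, relying crucially on the fact -- which you correctly single out -- that the constant $C$ from Theorem~\ref{thm:rr:mathias2} does not depend on the hypergraph sequence. The paper proceeds directly rather than by contradiction: for each $n\ge n_0$ it picks $\cR_n$ among the good hypergraphs to \emph{maximise} the failure probability $P_4(n)$, applies Theorem~\ref{thm:rr:mathias2} to this extremal sequence, and finishes with a union bound; your contradiction-plus-averaging argument (Fubini to extract a bad $\cH_{n_k}\in\mathcal A_{n_k}$ along a putative bad subsequence) is a logically equivalent reformulation of the same idea.

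There is, however, a genuine step that does not go through as written. You extend $(\cH_{n_k})_k$ to a full sequence $\mathbf H=(\cH_n)_n$ \emph{arbitrarily} on the complement of $(n_k)$, and then appeal to Theorem~\ref{thm:rr:mathias2}. But both $(\alpha,\eps,\zeta)$-denseness (Definition~\ref{def:dense}) and $(K,\mathbf q)$-boundedness (Definition~\ref{def:bounded}) are \emph{for-all-large-$n$} conditions on the sequence, as is the requirement $q_n^{r(r-1)/2}e(\cH_n)\to\infty$. An arbitrary extension can violate all three at infinitely many $n\notin\{n_k\}$, in which case the hypotheses of Theorem~\ref{thm:rr:mathias2} simply fail and the theorem cannot be applied; one cannot salvage this by ``reading the a.a.s.\ conclusion only along the subsequence'', since without the hypotheses there is no conclusion to read. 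The fix is small but necessary: for every $n\ge n_0$ the event $\mathcal A_n$ is non-empty (because $\Prob(\mathcal A_n)\to 1$), so extend $\mathbf H$ by choosing \emph{some} $\cH_n\in\mathcal A_n$ for each $n\notin\{n_k\}$ with $n\ge n_0$ (and anything for $n<n_0$). Then $\mathbf H$ genuinely satisfies the hypotheses of Theorem~\ref{thm:rr:mathias2} for all $n\ge n_0$, the theorem applies, and restricting the resulting a.a.s.\ statement to the subsequence $(n_k)$ yields the desired contradiction. With this repair your proof is correct.
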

\begin{proof} 
  Given $r\geq 3$, $K\geq 1$, $\delta>0$, $\zeta>0$ and $(\omega_n)_{n\in\NATS}$
  with $\omega_n\to\infty$ as $n\to\infty$, let $C$ be the constant returned by
  Theorem~\ref{thm:rr:mathias2}. Let $\mathbf{p}$ and $\mathbf{q}$
  be sequences of probabilities satisfying the conditions of the corollary.
  
  We define a property $\mathcal A_n$ of $r$-uniform hypergraphs as follows. An
  $n$-vertex hypergraph $\cH_n$ has property $\mathcal A_n$ if for all $n$-vertex
      graphs~$H$ with $V(H)=V(\cH_n)$ and $e(H)\ge
      \big(\frac{r-2}{r-1}+\eps\big)\binom{n}{2}$ the number of copies of $K_r$ in~$H$ induced by hyperedges of $\cH_n$
      is at least $\zeta e(\cH_n)$.
  
  We claim that there is a monotone function~$\nu(n)$ tending to zero as~$n$
  tends to infinity with the following properties.
  \begin{enumerate}[label=\abc]
    \item\label{item:rr:1} Let~$P_1(n)$ be the probability that 
      $\cR_n=\cR^{(r)}(n,p_n)$ has the property~$\mathcal A_n$. Then $P_1(n)\ge
      1-\nu(n)$.
   \item\label{item:rr:2}  There is a function $L(n)$ tending to
      infinity such that the probability~$P_2(n)$ that for
      $\cR_n=\cR^{(r)}(n,p_n)$
      \begin{equation}\label{eq:rr:2}
        q_n^{r(r-1)/2}\cdot e\big(\cR_n\big)>L(n)
      \end{equation}
      is at least $1-\nu(n)$.
    \item\label{item:rr:3} The probability $P_3(n)$ that, for
    $\cR_n=\cR^{(r)}(n,p_n)$, we have for each $i\in[\binom{r}{2}-1]$ and
    $\tilde q\geq q_n$
      \begin{equation}\label{eq:rr:3}
        \mu_i(\cR_n,\tilde q)\leq K\tilde q^{2i}\cdot\frac{e(\cR_n)^2}{n^2}\,,
      \end{equation}
	  is at least $1-\nu(n)$.
    \end{enumerate}
  Items~\ref{item:rr:1} and~\ref{item:rr:3} are immediate from the definitions
  of $(\frac{r-2}{r-1},\eps,\zeta)$-denseness and $(K,\mathbf{q})$-boundedness,
  respectively. Item~\ref{item:rr:2} is immediate from the
  fact that for each $L$, a.a.s.\ $q_n^{r(r-1)/2}\cdot e\big(\cR_n\big)>L$ holds.

  Let $n_0$ be such that $\nu(n_0)<\tfrac{1}{3}$. We fix a sequence
  $\mathbf{R}=(\cR_n)_{n\in\mathbb{N}}$ of hypergraphs in the following way.
  For each~$n\ge n_0$, consider the set of all $n$-vertex hypergraphs satisfying
  Property~$\mathcal A_n$,~\eqref{eq:rr:2},
  and~\eqref{eq:rr:3}. This set is non-empty by choice of~$n_0$. Now let~$\cR_n$
  be the element of this set which maximises the probability $P_4(n)$ that
  the random graph $G=\Gnq[Cq_n]$ possesses a subgraph with at least
  $(\frac{r-2}{r-1}+\delta)\cdot e(G)$ edges which is undetected by $\cR_n$. For
  $n<n_0$ let~$\cR_n$ be an arbitrary $n$-vertex hypergraph.

  We deduce from Property~$\mathcal A_n$ that~$\mathbf{R}$ is
  $\big(\frac{r-2}{r-1},\eps,\zeta\big)$-dense (in the sense of
  Definition~\ref{def:dense}), from~\eqref{eq:rr:3} that~$\mathbf{R}$ is
  $(K,\mathbf{q})$-bounded (in the sense of
  Definition~\ref{def:bounded}), and from~\eqref{eq:rr:2} that~$\mathbf{R}$ satisfies
  $q_n^{r(r-1)/2}\cdot e(\cR_n)\rightarrow\infty$.
  It follows that we can apply Theorem~\ref{thm:rr:mathias2} to $\mathbf{R}$,
  which implies that the probability $P_4(n)$ tends to zero as $n$ tends to
  infinity.  Consequently, with probability at least
  $1-\big((1-P_1(n))+(1-P_2(n))+(1-P_3(n))\big)-P_4(n)\ge
  1-3\nu(n)-P_4(n)=1-o(1)$, the random hypergraph $\cR^{(r)}(n,p_n)$ detects every subgraph of $G=\Gnq[Cq_n]$ with at
  least $(\frac{r-2}{r-1}+\delta)\cdot e(G)$ edges. Hence $\cR^{(r)}(n,p_n)$ is
  a.a.s.\ $\delta$-Tur\'annical for $\Gnq[Cq_n]$.
\end{proof}

To prove the $1$-statement of Theorem~\ref{thm:TurForGnp} it now suffices to
check that the conditions of Theorem~\ref{thm:TurForGnp} guarantee that
$\cR^{(r)}(n,p)$ satisfies the conditions of Corollary~\ref{cor:rr}. We will
make use of the Chernoff bound for a binomial random variable $X$ (see, e.g.,
\cite[Theorem~2.1]{JaLuRu:Book})
\begin{equation}\label{eq:chernoff:LB}
  \Prob\big(X\ge(1+\gamma)\Exp X\big)\le\exp(-\gamma^2\Exp X/3)\,,
  \qquad\text{for $\gamma\le1/2$}\,.
\end{equation}

\medskip
The last tool we shall need for our proof of Theorem~\ref{thm:TurForGnp} is a
counterpart of Theorem~\ref{thm:TuranDensity} for random graphs due to
Kohayakawa, R\"odl and Schacht.
\begin{theorem}[Kohayakawa, R\"odl
\& Schacht~\cite{KoRoSch:TuranOld}]\label{thm:KoSchRoOLDTURAN} Given any
$r\in\mathbb N$ and $\eps>0$, there exists $\delta>0$ such that for any
sequence of probabilities $(q_n)_{n\in\mathbb N}$ with $\liminf_n q_n>0$
the following is a.a.s.\ true for the random graph $G=\Gnq[q_n]$. If
$G'\subset G$ is a graph with at least  $(1+\eps)\frac{r-2}{r-1}e(G)$
edges, then there are at least~$\delta q_n^{r(r-1)/2}n^r$ copies of $K_r$  in $G'$.
\end{theorem}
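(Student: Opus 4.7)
The plan is to reduce the claim to the Erd\H{o}s--Simonovits supersaturation theorem (Theorem~\ref{thm:TuranDensity}) by combining Szemer\'edi's regularity lemma with pseudorandomness properties of $\Gnq[q_n]$. The liminf hypothesis is crucial here: since $\liminf_n q_n>0$, there is a constant $q_0>0$ with $q_n\ge q_0$ eventually, so $G=\Gnq[q_n]$ lives in the dense regime where classical regularity applies. Standard Chernoff estimates (compare~\eqref{eq:chernoff:LB}) show that a.a.s.\ $G$ is $\eta$-pseudorandom in the following sense: $e(G)=(1\pm\eta)q_n\binom{n}{2}$ and every pair $A,B$ of disjoint vertex sets with $|A|,|B|\ge\eta n$ satisfies $d_G(A,B)=(1\pm\eta)q_n$, for any fixed $\eta>0$ chosen sufficiently small in terms of $r$ and $\eps$.

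Given such a $G$ and a subgraph $G'\subseteq G$ with $e(G')\ge(1+\eps)\frac{r-2}{r-1}e(G)$, I would apply the dense Szemer\'edi regularity lemma to $G'$ with parameter $\eta$, obtaining an equitable partition $V_0\dcup V_1\dcup\cdots\dcup V_k$ (with $|V_0|\le\eta n$). Define a weighted reduced graph $R$ on $[k]$ with weights $w_{ij}:=d_{G'}(V_i,V_j)/q_n$ on pairs $(V_i,V_j)$ that are $\eta$-regular in $G'$ with density at least $\eta q_n$, and $w_{ij}:=0$ otherwise. Bounding the edges of $G'$ lost to irregular pairs, low-density pairs, edges incident to $V_0$ and edges inside clusters (each contributing at most $O(\eta)q_n n^2$ by pseudorandomness of $G$), a routine computation yields
\[
\frac{1}{\binom{k}{2}}\sum_{ij\in E(R)}w_{ij}\ge\Bigl(1+\tfrac{\eps}{2}\Bigr)\frac{r-2}{r-1}\,.
\]

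I would then threshold $R$ at $w_{ij}\ge\eta$ to obtain an unweighted graph $R'$ on $[k]$ with at least $\bigl(1+\tfrac{\eps}{3}\bigr)\frac{r-2}{r-1}\binom{k}{2}$ edges, and apply Theorem~\ref{thm:TuranDensity} to produce $\gamma k^r$ copies of $K_r$ in $R'$, for some $\gamma=\gamma(r,\eps)>0$. Each such clique $\{i_1,\dots,i_r\}\subset[k]$ corresponds to an $r$-tuple of clusters on which $G'$ is pairwise $\eta$-regular with density at least $\eta q_n$, and the standard dense counting lemma produces at least $(1-o(1))\prod_{s<t}d_{G'}(V_{i_s},V_{i_t})\cdot(n/k)^r\ge\eta^{\binom{r}{2}}q_n^{r(r-1)/2}(n/k)^r$ copies of $K_r$ in $G'$. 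Summing over the $\gamma k^r$ cliques gives $\delta q_n^{r(r-1)/2}n^r$ copies of $K_r$ in $G'$ for $\delta:=\gamma\eta^{\binom{r}{2}}/2$, as required.

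The main obstacle is the interplay between the adversarial choice of $G'$ and the randomness of $G$: one must guarantee that the pseudorandomness of $G$ is strong enough to control the counting lemma applied inside every possible $G'\subseteq G$, not just a fixed subgraph. This is exactly where the liminf hypothesis pays off, because pseudorandomness at scale $q_0>0$ transfers to arbitrary regular partitions without loss, and the standard dense counting lemma applies verbatim. Dropping this hypothesis and allowing $q_n\to0$ would force a sparse regularity lemma and a corresponding sparse counting lemma, which is precisely the obstacle addressed by Schacht's and Conlon--Gowers' machinery invoked elsewhere in this paper.
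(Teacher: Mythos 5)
The paper does not actually prove Theorem~\ref{thm:KoSchRoOLDTURAN}; it cites Kohayakawa, R\"odl and Schacht and merely remarks that in the dense regime $\liminf_n q_n>0$ the statement has ``a relatively simple proof using Szemer\'edi's Regularity Lemma.'' Your proposal is exactly the kind of argument that remark is pointing at, so the strategy is the right one. However, there is a genuine gap in the way you handle your parameters, and as written the key counting step does not go through.

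The problem is that you use a single parameter $\eta=\eta(r,\eps)$ as the regularity parameter in Szemer\'edi's lemma \emph{and} as the density threshold defining $R'$. A pair $(V_i,V_j)$ that is $\eta$-regular with $d_{G'}(V_i,V_j)\ge\eta q_n$ has density possibly as small as $\eta q_n$, which (since $q_n\le 1$) is never more than $\eta$. The dense counting lemma requires the cluster densities to \emph{dominate} the regularity parameter; typically one needs something like $d_{ij}\ge 2\eta$ to conclude that an $r$-tuple of pairwise $\eta$-regular, $d$-dense clusters spans $(1-O(r\eta/d))\prod d_{ij}\prod|V_i|$ copies of $K_r$. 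With $d_{ij}\ge\eta q_n\le\eta$ this hypothesis fails for every $q_n<1$, and the $(1-o(1))$ factor you claim is not available: the error term is controlled by $\eta/d_{ij}\ge 1/q_n$, which is not small. So the step ``the standard dense counting lemma produces at least $(1-o(1))\prod_{s<t}d_{G'}(V_{i_s},V_{i_t})\cdot(n/k)^r$ copies'' is where the argument breaks.

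The gap is fixable but requires decoupling the two roles of $\eta$. Keep the density threshold $\eta q_n$ with $\eta=\eta(r,\eps)$ (this is what determines the final $\delta=\gamma(r,\eps)\cdot c(\eta,r)$ and it must be independent of the sequence $\mathbf q$). But run the regularity lemma on $G'$ with a much finer parameter $\eta_{\mathrm{reg}}$, say $\eta_{\mathrm{reg}}:=\eta q_0/100$ where $q_0:=\liminf_n q_n>0$. This $\eta_{\mathrm{reg}}$ may depend on the sequence through $q_0$, and hence so may the bound $K_0(\eta_{\mathrm{reg}})$ on the number of parts and the scale at which you need pseudorandomness of $G$; none of that is a problem, because those quantities only enter the a.a.s.\ statement, not $\delta$. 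Since now $\eta_{\mathrm{reg}}\le\eta q_n/100\le d_{ij}/100$ for every pair you keep, the counting lemma applies and yields at least $\tfrac12(\eta q_n)^{\binom r2}(n/k)^r$ copies of $K_r$ per clique of $R'$, so multiplying by the $\gamma k^r$ cliques from Theorem~\ref{thm:TuranDensity} gives $\delta q_n^{\binom r2}n^r$ copies with $\delta$ depending only on $r$ and $\eps$, as required. A related minor point: your pseudorandomness at scale $\eta n$ does not cover the clusters, which have size $n/k$ with $k$ up to the tower-type bound $K_0\gg 1/\eta$; you should instead demand a.a.s.\ pseudorandomness at scale $n/K_0(\eta_{\mathrm{reg}})$, which is still a linear fraction of $n$ and follows from Chernoff and a union bound.
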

Kohayakawa, R\"odl and Schacht prove their result for a wider range of
probabilities, allowing $q_n$'s to decrease roughly at the speed $n^{-\frac1{r-1}}$.
However we do not need this stronger result. Actually, in our setting when
$\liminf_n q_n>0$, Theorem~\ref{thm:KoSchRoOLDTURAN} has a relatively simple proof using Szemer\'edi's Regularity Lemma. Let us remark that Theorem~\ref{thm:KoSchRoOLDTURAN} was one of the early contributions to the Kohayakawa-{\L}uczak-R\"odl conjecture. 

\begin{proof}[Proof of Theorem~\ref{thm:TurForGnp}] 
  Given $r$ and $\eps\in(0,1/(r-2))$, set $\delta':=\eps$ and
  $\eps':=\delta'/8^{\binom{r}{2}}$. Let $\zeta>0$ be the constant provided by
  Theorem~\ref{thm:TuranDensity} for~$r$ and~$\eps'$. Now
  set
  \begin{equation}\label{eq:Kprime}
    K':=r^{2r+5} 2^{r^2+3}
  \end{equation}
  and let $C'$ be the constant returned by
  Corollary~\ref{cor:rr} for input~$r$, $K'$, $\delta'$ and $\zeta$. 
  Let $\deltaH$ be given by Theorem~\ref{thm:KoSchRoOLDTURAN} for input parameters $\eps$ and $r$. Set
 \begin{equation}\label{eq:TurForGnp:c}
    c:=\tfrac{1}{16}\left(\tfrac1{r-1}-\eps\tfrac{r-2}{r-1}\right)
    \qquad\text{and}\qquad
    C:= \max\left\{ \tfrac{8}{\zeta}, C'^{(r+1)(r-2)/2},\tfrac{2}{\deltaH} \right\}
    \,. 
  \end{equation}
  The constants $c$ and $C$ from~\eqref{eq:TurForGnp:c} define the
  thresholds for the 0-statement and 1-statement of Theorem~\ref{thm:TurForGnp}.
  Let $\mathbf{p}=(p_n)_{n\in\NATS}$ and $\mathbf{q}=(q_n)_{n\in\NATS}$ be
  given. We let $\mathcal{T}_n$ denote the event that $\cR^{(r)}(n,p_n)$ is
  $\eps$-Tur\'annical for $\Gnq[q_n]$.

  \smallskip

  First we prove the \emph{$0$-statement}. 
  Since adding hyperedges to a sequence of hypergraphs does not
  destroy their property of being a.a.s.\ $\eps$-Tur\'annical for $\Gnq[q_n]$,
  we can assume that
  \begin{equation}\label{eq:TurForGnp:0}
    p_n=c\big(nq_n^{(r+1)/2}\big)^{2-r} 
    \quad\text{and hence}\quad
    q_n=c' \big(np_n^{1/(r-2)}\big)^{-2/(r+1)}
    \,,
  \end{equation}
  where $c':=c^{2/((r+1)(r-2))}$. In particular, since $1\ge p_n$, we have that 
  \begin{equation}\label{eq:plar}
    q_n
    \ge c' n^{-2/(r+1)}
 \end{equation}

  Recall that we are dealing with two random objects, the random graph $\Gnq[q_n]$ and
  the random hypergraph $\cR^{(r)}(n,p_n)$.
  In the
  following argumentation we shall first perform the random experiment for
  $\Gnq[q_n]$ and then the one for $\cR^{(r)}(n,p_n)$.

  Let us first expose the graph $\Gnq[q_n]$. The Chernoff
  bound~\eqref{eq:chernoff} implies that the probability that $\Gnq[q_n]$
  has less than $q_nn^2/4$ edges tends to zero.
  Moreover, the random variable~$X$ counting copies of $K_r$ in $\Gnq[q_n]$ has
  expectation $\binom{n}{r}q_n^{r(r-1)/2}$ and variance
  $\bigO\big(n^rq_n^{r(r-1)/2}\big)$ (see for example Lemma~3.5
  of~\cite{JaLuRu:Book}). Hence, applying Chebyshev's inequality and observing
  that $n^rq_n^{r(r-1)/2}\to\infty$ by~\eqref{eq:plar}, we obtain that
 $\Prob\big[X\ge 2\tbinom{n}{r}q_n^{r(r-1)/2}\big]=o(1)$.

  Since a.a.s. $\Gnq[q_n]$ has at least $q_nn^2/4$ edges and 
  \begin{equation}\label{eq:Ptail1}
    X<2\tbinom{n}{r}q_n^{r(r-1)/2}\,,
  \end{equation}
  from now we assume these two events
  occur. We next expose the hypergraph $\cR^{(r)}(n,p_n)$.
  Let~$Y$ be the random variable counting the hyperedges of
  $\cR^{(r)}(n,p_n)$ which induce copies of $K_r$ in~$G=\Gnq[q_n]$.  
  Observe that~$Y$ has distribution $\Bin\big(X,p_n\big)$.
  From the Chernoff bound~\eqref{eq:chernoff:LB} and from~\eqref{eq:Ptail1} we
  infer that a.a.s.\ $Y$ does not exceed $4\binom{n}{r}q_n^{r(r-1)/2}p_n$.
  Because
  \begin{equation}\label{eq:TurForGnp:EY}
    \binom{n}{r}q_n^{\binom{r}{2}}p_n=
    \binom{n}{r}
    q_n^{(r-2)(r+1)/2}p_nq_n\eqByRef{eq:TurForGnp:0}\binom{n}{r}cn^{2-r}q_n
    \,,
  \end{equation}
  we thus a.a.s.\ have
  \begin{equation*}\label{eq:cliq}
    \begin{split}
     Y&\le 4\tbinom{n}{r}q_n^{\binom{r}{2}}p_n 
     \eqByRef{eq:TurForGnp:EY}4\tbinom{n}{r}cn^{2-r}q_n
     \le 4c q_n n^2 \\
     &\eqByRef{eq:TurForGnp:c} \left(\frac1{r-1}-\eps\frac{r-2}{r-1}\right)
     \frac{q_nn^2}{4}
     \le \left(\frac1{r-1}-\eps\frac{r-2}{r-1}\right)e(G)\,.
   \end{split}
 \end{equation*}
 Hence, a.a.s.\ $\cR_n=\cR^{(r)}(n,p_n)$ does not detect some subgraph~$G'$
 of~$G$ which is obtained by deleting at most
 $(\frac1{r-1}-\eps\frac{r-2}{r-1})e(G)$ edges from~$G$. In particular,
 $e(G')\ge(1+\eps)\frac{r-2}{r-1}e(G)$,
 which finishes the proof of the $0$-statement.

  \smallskip

  We now turn to the \emph{$1$-statement}.
  Again, by monotonicity, we can assume that
  \begin{equation}\label{eq:TurForGnp:1}
    p_n=C\big(nq_n^{(r+1)/2}\big)^{2-r} 
    \quad\text{and hence}\quad
    q_n=C_q \big(np_n^{1/(r-2)}\big)^{-2/(r+1)}
    \,,
  \end{equation}
  where $C_q:=C^{2/((r+1)(r-2))}\geByRef{eq:TurForGnp:c} C'$.
  Since $p_n\le 1$ and $q_n\le 1$ we have that 
  \begin{equation}\label{eq:tI}
    q_n \ge C_q n^{-2/(r+1)} \quad \mbox{ and }\quad p_n \ge Cn^{2-r}\;.
  \end{equation}
  
  We can assume (by taking subsequences if it is necessary) that either $\liminf_n q_n>0$, or $q_n=o(1)$. In the former case we mimic our proof of Theorem~\ref{thm:random:approx} while in the latter case we apply Corollary~\ref{cor:rr}.
  
  \medskip
  Let us first prove the 1-statement when $\liminf_n q_n>0$. We repeat the
  proof strategy of the 1-statement of
  Theorem~\ref{thm:random:approx}. Suppose that $G'$ is an arbitrary graph
  on the vertex set $[n]$ with at least $\deltaH q_n^{r(r-1)/2}n^r$ copies
  of $K_r$. The probability that $G'$ is not detected by $\cR^{(r)}(n,p_n)$
  is at most \[(1-p_n)^{\deltaH q_n^{r(r-1)/2}n^r}\;.\] Suppose now that a
  random graph $G=\Gnq[q_n]$ is given. We can assume that $G$ has at most
  $q_nn^2$ edges as this property is a.a.s.\ satisfied. Consequently, $G$ contains at most $2^{q_nn^2}$ subgraphs $G'$ on the same vertex set. By Theorem~\ref{thm:KoSchRoOLDTURAN} we a.a.s.\ have that each such subgraph with at least $(1+\eps)\frac{r-2}{r-1}e(G)$ edges contains at least $\deltaH q_n^{r(r-1)/2}n^r$ copies of $K_r$. Therefore, the union bound over all such graphs $G'$ gives that
  \begin{multline*}
    \Prob\left[\cR^{(r)}(n,p_n)\mbox{ is not $\eps$-Tur\'annical for }
      \Gnq[q_n]\right]
    \le 2^{q_nn^2}\cdot (1-p_n)^{\deltaH q_n^{\binom{r}2}n^r}\\
    \le \exp\left(q_n n^2-p_n \deltaH q_n^{\binom{r}2}n^r \right)
    \eqByRef{eq:TurForGnp:1}\exp\left(q_nn^2-Cn^2q_n\deltaH\right)
    \ByRef{eq:TurForGnp:c}{\rightarrow} 0\;,
  \end{multline*}
  and the statement follows in this case.

  \medskip
  Let us now focus on the 1-statement in the case $q_n=o(1)$. The claim will follow from
  Corollary~\ref{cor:rr} (with parameters $r$, $K'$, $\delta'$, $\zeta$, $C'$)
  applied to the sequences of
  probabilities $\mathbf{p}$ and $\mathbf{q}'=(q'_n)_{n\in\NATS}:=\mathbf{q}/C'$,
  together with the following claim.

  \begin{AuxiliaryCl}
    \label{cl:TurForGnp}
    We have that
    \begin{enumerate}[label=\abc]
      \item\label{cl:TurForGnp:edgecount} for every~$L$ a.a.s.\
        $(q'_n)^{r(r-1)/2}\cdot e\big(\cR^{(r)}(n,p_n)\big)>L$, 
      \item\label{cl:TurForGnp:dense} $\cR^{(r)}(n,p_n)$ is a.a.s.\
        $\big(\frac{r-1}{r-2},\eps',\zeta\big)$-dense, and
      \item\label{cl:TurForGnp:bounded} $\cR^{(r)}(n,p_n)$ is a.a.s.\
        $(K',\mathbf{q}')$-bounded.
      \end{enumerate}
    \end{AuxiliaryCl}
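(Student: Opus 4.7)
The plan for the three items is as follows. For~(a), the variable $e(\cR^{(r)}(n,p_n))$ is binomial with mean $\binom{n}{r}p_n$. Substituting the expression for $p_n$ from~\eqref{eq:TurForGnp:1} together with $q'_n=q_n/C'$, a direct computation yields $(q'_n)^{r(r-1)/2}\binom{n}{r}p_n=\Theta(n^2q_n)$, which tends to infinity by~\eqref{eq:tI}; one Chernoff bound~\eqref{eq:chernoff} then gives~(a). For~(b), Theorem~\ref{thm:TuranDensity} (the very result that provided the constant $\zeta$) guarantees that every graph $H$ on $[n]$ with $e(H)\ge\bigl(\tfrac{r-2}{r-1}+\eps'\bigr)\binom{n}{2}$ contains at least $\zeta'n^r$ copies of $K_r$ for some $\zeta'>0$. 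For each fixed such $H$, the number of hyperedges of $\cR^{(r)}(n,p_n)$ inducing one of these cliques is binomial with mean at least $\zeta'n^rp_n\ge\zeta'Cn^2$, so Chernoff yields an error of $\exp(-\Omega(n^2))$. Since $C$ can be taken large in~\eqref{eq:TurForGnp:c}, this beats the $2^{\binom{n}{2}}$ union bound over candidate graphs $H$; combining with the a.a.s.\ upper bound $e(\cR^{(r)}(n,p_n))\le 2\binom{n}{r}p_n$ gives~(b).

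For~(c), the starting identity is
\begin{equation*}
  \mu_i(\cR_n,\tilde q) = \sum_{(H,H')\in\cR_n^2}\ \sum_{u\neq v,\ u,v\in H\cap H'} \Prob_G\bigl[\text{$H,H'$ both contain $\ge i$ edges of $G=\Gnq[\tilde q]$ other than $uv$}\bigr],
\end{equation*}
obtained by expanding the square in the definition of $\mu_i$ and swapping the sums over $(u,v)$ and $(H,H')$. One groups the pairs $(H,H')$ by the overlap $s=|H\cap H'|\in\{2,\dots,r\}$ (with $s=r$ being $H=H'$) and bounds the joint probability, by a routine first-moment argument on choices of $i$ witness edges in each of $H$ and $H'$, by $O\bigl(\tilde q^{2i-\min(i,\binom{s}{2}-1)}\bigr)$. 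Letting $N_s(\cR_n)$ denote the number of pairs $(H,H')\in\cR_n^2$ with $|H\cap H'|=s$, one has $\Exp N_s=\Theta(n^{2r-s}p_n^2)$ for $s<r$ and $\Exp N_r=\Theta(n^rp_n)$, and a second-moment calculation yields $N_s\le 2\Exp N_s$ a.a.s.\ for every $s$. A further Chernoff bound gives $e(\cR_n)\ge\tfrac12\binom{n}{r}p_n$ a.a.s., so $e(\cR_n)^2/n^2=\Omega(n^{2r-2}p_n^2)$. A term-by-term comparison then shows that the ratio of the $s$-contribution in $\mu_i$ to the target $\tilde q^{2i}e(\cR_n)^2/n^2$ is $O\bigl(n^{2-s}\tilde q^{-\min(i,\binom{s}{2}-1)}\bigr)$, and the elementary inequality $(s-2)(s+1)\le(s-2)(r+1)$ for $s\le r$, together with $\tilde q\ge q'_n=\Omega(n^{-2/(r+1)})$ and the fact that $C\ge(C')^{(r+1)(r-2)/2}$ in~\eqref{eq:TurForGnp:c}, renders each such ratio bounded by a constant depending only on~$r$. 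Choosing $K'$ as in~\eqref{eq:Kprime} absorbs these constants simultaneously, and the resulting inequality holds for every $\tilde q\ge q'_n$ because, on the event that the $N_s$ bounds hold, the estimate for $\mu_i(\cR_n,\tilde q)$ is a deterministic polynomial inequality in $\tilde q$.

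The main obstacle is part~(c), and within it the concentration $N_s\le 2\Exp N_s$: these statistics are quadratic functions of the Bernoulli indicators $\mathbf{1}[H\in\cR_n]$, so a direct variance computation works but involves enumerating quadruples of hyperedges by all possible vertex-intersection patterns. Once these concentration estimates are in hand, everything else reduces to Chernoff bounds and the algebraic relations imposed on $p_n$ and $q_n$ through the choice~\eqref{eq:TurForGnp:1} of constants.
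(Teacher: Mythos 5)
Your plan for~(a) and~(b) matches the paper's proof essentially verbatim: Chernoff on the binomial $e(\cR^{(r)}(n,p_n))$ combined with the algebraic relation~\eqref{eq:TurForGnp:1} for~(a), and Theorem~\ref{thm:TuranDensity} plus Chernoff and a union bound over the $2^{\binom{n}{2}}$ graphs~$H$, together with an a.a.s.\ upper bound on $e(\cR_n)$, for~(b).

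For~(c), your decomposition of $\mu_i$ over ordered pairs of hyperedges grouped by intersection size~$s$, the bound $O(\tilde q^{2i-\min(i,\binom{s}{2}-1)})$ for the joint probability, and the reduction to deterministic bounds on the statistics $N_s$, are all the same as in the paper (the paper uses the slightly looser but equivalent exponent $2i+1-\binom{s}{2}$). The genuine difference is in how the a.a.s.\ bound on $N_s$ is obtained. The paper uses Chernoff for $N_r=e(\cR_n)$ and then Markov's inequality for $2\le j\le r-1$, claiming $\Exp N(j)=o(\text{target})$. Your proposal is to run a uniform second-moment (Chebyshev) argument to get $N_s\le 2\Exp N_s$ a.a.s. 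This is in fact the more careful route: the paper's Markov step breaks at $j=2$, where $(q'_n)^{(j-2)(j+1)/2}=1$, so the claimed chain $\bigO(n^{2-j})=o\big((q'_n)^{(j-2)(j+1)/2}\big)$ degenerates to "$\bigO(1)=o(1)$". Indeed $\Exp N(2)=\Theta(n^{2r-2}p_n^2)$ is only a constant fraction of the target $2n^{2r-2}p_n^2$, so Markov gives only a constant-probability bound, not a.a.s. A Chebyshev bound on $N(2)$, exactly as you propose, repairs this. One small caveat on your side: the blanket claim "$N_s\le 2\Exp N_s$ a.a.s.\ for every~$s$" can fail when $\Exp N_s=O(1)$ (possible for $s$ near $r$ if $r$ is large, where Poisson-type fluctuations dominate); for those~$s$ you should fall back on Markov against the target directly, which is harmless since there $\Exp N_s=o(\text{target}_s)$. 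With that minor adjustment your argument is complete and, at $s=2$, more rigorous than the paper's.
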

    \begin{clproof}[Proof of Claim~\ref{cl:TurForGnp}]
      We first verify~\ref{cl:TurForGnp:edgecount}. We have
      \[\Exp\Big(e\big(\cR^{(r)}(n,p_n)\big)\Big)=p_n\binom{n}{r}\;,\]
      which tends to infinity by~\eqref{eq:tI}. Consequently, the Chernoff
      bound~\eqref{eq:chernoff} guarantees that a.a.s.\ $\cR^{(r)}(n,p_n)$ has
      at least $p_n\binom{n}{r}/2$ hyperedges. Now we have
      \[\frac{(q'_n)^{\binom{r}{2}}p_n\binom{n}{r}}{2}
      \eqByRef{eq:TurForGnp:1}\frac{(q'_n)^{\frac{r^2-r}{2}}Cn^{2-r}
      q_n^{(r+1)(2-r)/2}\binom{n}{r}}{2}=\Omega\left(q_n
      n^{2-r}\binom{n}{r}\right)\,, \]
and by~\eqref{eq:tI} this tends to infinity.
      
      \smallskip

      Now we verify~\ref{cl:TurForGnp:dense}. Given an $n$-vertex graph $H$ with
      $e(H)\geq\big(\frac{r-2}{r-1}+\eps'\big)\binom{n}{2}$, by
      Theorem~\ref{thm:TuranDensity}, $H$ contains at least $\zeta n^r$ copies of
      $K_r$. It follows that the expected number of hyperedges of
      $\cR_n=\cR^{(r)}(n,p_n)$ which induce copies of $K_r$ in $H$ is at least
      $\zeta n^r p_n$. By the Chernoff bound~\eqref{eq:chernoff}, the
      probability that less than $\zeta n^r p_n/2$ copies of $K_r$ in $H$ are
      induced by hyperedges of $\cR_n$ is at most \[\exp\left(-\frac{\zeta n^r p_n}{8}\right)
      \leByRef{eq:tI}\exp\left(-\frac{C\zeta n^2}{8}\right) \eqByRef{eq:TurForGnp:c}
      o(2^{-n^2})\,.\]
      Applying the union bound (on at most $2^{\binom{n}{2}}$ graphs $H$) we
      conclude that the probability that there exists any $n$-vertex graph~$H$ with at least
      $\big(\frac{r-1}{r-2}+\eps'\big)\binom{n}{2}$ edges and less than
      $3\zeta{\binom{n}{r}}p_n/2\le \zeta n^r p_n/2$ copies of $K_r$ on
      hyperedges of $\cR_n$ tends to zero as $n$ tends to infinity. Furthermore, applying
      the Chernoff bound~\eqref{eq:chernoff:LB} in conjunction with~\eqref{eq:tI}, the probability that $\cR^{(r)}(n,p)$ has
      more than $3p_n\binom{n}{r}/2$ hyperedges tends to zero as $n$ tends to
      infinity. It follows that for~$\cR_n$ a.a.s.\ every $n$-vertex graph $H$ with more than
      $\big(\frac{r-2}{r-1}+\eps'\big)\binom{n}{2}$ edges has at least $\zeta
      e(\cR_n)$ copies of $K_r$ on hyperedges of $\cR_n$. Therefore,
      $\cR^{(r)}(n,p_n)$ is a.a.s.\
      $\big(\frac{r-2}{r-1},\eps',\zeta\big)$-dense.

      \smallskip

     Now we prove~\ref{cl:TurForGnp:bounded}. We need to show that $\cR_n=\cR^
     {(r)}(n,p_n)$ a.a.s.\ has the property that for each $1\leq
    i\leq\binom{r}{2}-1$ and each $\tilde q\geq q'_n$, we have
      \begin{equation}\label{eq:muCond}
        \mu_i(\cR_n,\tilde q)\leq
        K'\tilde q^{2i}\frac{e(\cR_n)^2}{n^2}\,.
      \end{equation}
      We will show that~\eqref{eq:muCond} holds for
      all $1\leq i\leq\binom{r}{2}-1$ and $\tilde q\geq q'_n$ provided that
      $\cR_n$ obeys a simple bound (inequality~\eqref{eq:ToVer} below); this bound will
      turns out to hold a.a.s.\ for our random hypergraph.
      
      Given a hypergraph $\cR_n$ and two distinct vertices $u$ and $v$, let
      $F_1$ and $F_2$ be two hyperedges containing $u$ and $v$ and
      intersecting in a set $A$ of $j$ vertices.  Then the probability~$P_{i,j}$
      that both $F_1$ and $F_2$ contain at least $i$ edges of the random graph $G=\Gnq[\tilde q]$, not counting
      $uv$, can be bounded as follows. We use the random variables
      $X_A:=|E(G[A])\setminus uv|$, $X_{F_1}:=e(G[F_1\setminus
      A])+e(G[F_1\setminus A,A])$, and $X_{F_2}:=e(G[F_2\setminus A])+e(G[F_2\setminus A,A])$. Then
      \begin{equation}\begin{split}
          P_{i,j}
          & \le \sum_{k=0}^{\binom{j}{2}-1} 
            \Prob(X_A=k)\Prob(X_{F_1}\geq i-k)\Prob(X_{F_2}\geq i-k) \\
          & \le \sum_{k=0}^{\binom{j}{2}-1}
            \binom{\binom{j}{2}-1}{k}\tilde q^k
            \bigg(\binom{\binom{r}{2}-\binom{j}{2}}{i-k}\tilde q^{i-k}\bigg)^2  \\
          & \le 2^{\binom{j}{2}-1+2\left(\binom{r}{2}-\binom{j}{2}\right)} 
            \sum_{k=0}^{\binom{j}{2}-1}\tilde q^{2i-k} \le j^2 2^{r^2} \cdot
            \tilde q^{2i+1-\binom{j}{2}} \,.\label{eq:Pij}
      \end{split}\end{equation}
      Let $N(j)$ count the number of pairs of hyperedges in $\cR_n$
      intersecting in exactly $j$ vertices. Then we have
      \begin{equation*}\begin{split}
        \mu_i(\cR_n,\tilde q)
        & =\mathbb{E}\left[\sum_{\substack{u,v\\u\neq
        v}}\deg_i^2(u,v,\Gnq[\tilde q])\right] =\sum_{\substack{u,v\\u\neq v}}\:
        \sum_{\substack{F_1\in\cE(\cR_n)\\F_1\ni
        u,v}}\:\sum_{\substack{F_2\in\cE(\cR_n)\\F_2\ni u,v}} P_{i,|F_1\cap
        F_2|} \\ & =\sum_{j=2}^{r}N(j)j(j-1)P_{i,j} \leByRef{eq:Pij} r^4 2^{r^2}\sum_{j=2}^{r} N(j)\tilde q^{2i+1-\binom{j}{2}}\,.
      \end{split}\end{equation*} 
      It follows that $\cR_n$ satisfies~\eqref{eq:muCond} if we have, for
      each $2\leq j\leq r$ and $\tilde q\geq q'_n$,
      \begin{equation}\label{eq:ToVer}
        r^5 2^{r^2}\cdot N(j)\cdot \tilde q^{1-\binom{j}{2}}
        \le K' \frac{e(\cR_n)^2}{n^2}\,.
      \end{equation}      
      Since $j\geq 2$ we have $1-\binom{j}{2}\leq 0$. Therefore, the left-hand
      side of~\eqref{eq:ToVer} is non-increasing in~$\tilde q$. The right-hand
      side of~\eqref{eq:ToVer} does not depend upon $\tilde q$. It follows that we need only verify that a.a.s.\ $\cR_n=\cR^{(r)}(n,p_n)$ satisfies~\eqref{eq:ToVer}  for each $2\leq j\leq r$, with $\tilde q=q'_n$. We have that a.a.s.\
      $e(\cR^{(r)}(n,p_n))\geq p_n\binom{n}{r}/2\ge p_n n^r/(2r^r)$, by the Chernoff
      bound~\eqref{eq:chernoff}. So it is enough to show that a.a.s.\ for
      each $2\leq j\leq r$ we have
      \begin{equation}\label{eq:Nj}
        N(j)\le \frac{K'}{r^5 2^{r^2}} (q'_n)^{\frac{(j-2)(j+1)}{2}} 
          \frac{p_n^2n^{2r-2}}{4 r^{2r}}
        \eqByRef{eq:Kprime} 2 (q'_n)^{\frac{(j-2)(j+1)}{2}} p_n^2n^{2r-2}
      \,.
      \end{equation}

      To show that~\eqref{eq:Nj} holds, we first consider the case $j=r$.
      Observe that $N(r)$ is simply
      the number of hyperedges in $\cR^{(r)}(n,p_n)$, and is therefore (by the
      Chernoff bound~\eqref{eq:chernoff:LB}) a.a.s.\ at most
      $2p_n\binom{n}{r}\le 2p_n n^r$.  Substituting
      $q'_n\ge \big(np_n^{1/(r-2)}\big)^{-2/(r+1)}$ into the
      right-hand side of~\eqref{eq:Nj} (for $j=r$), we have
      \[2 (q'_n)^{\frac{(r-2)(r+1)}{2}}p_n^2n^{2r-2}
      \ge 2\Big(np_n^{\frac{1}{r-2}}\Big)^{2-r}p_n^2n^{2r-2}
      =2 p_n n^r\;.
      \]
      Therefore~\eqref{eq:Nj} holds for $j=r$.

      Suppose now that $2\leq j\leq r-1$. Then we have
      \[\Exp(N(j))=\binom{n}{r}\binom{r}{j}\binom{n-r}{r-j}p_n^2
      =\bigO(n^{2r-j}p_n^2)\,.\]
      We have by~\eqref{eq:tI} that
      $q'_n=\Omega\big(n^{-\frac{2}{r+1}}\big)=\omega\big(n^{-\frac{2}{j+1}}\big)$
      for each $2\leq j\leq r-1$. Consequently,
      \[\Exp(N(j))=\bigO(n^{2r-j}p_n^2)=\bigO(n^{2-j}p_n^2n^{2r-2})=
      o\Big((q'_n)^{\frac{(j-2)(j+1)}{2}}p_n^2n^{2r-2}\Big)\,.\]
      By Markov's inequality,~\eqref{eq:Nj} holds a.a.s.\ for
      every $2\leq j\leq r-1$. This completes the proof that
      $\cR^{(r)}(n,p_n)$ is a.a.s.\ $(K',\mathbf{q'})$-bounded.
    \end{clproof}

    It follows that a.a.s.\ $\cR^{(r)}(n,p_n)$ satisfies the conditions to
    apply Corollary~\ref{cor:rr}, that is,  a.a.s.\ $\cR^{(r)}(n,p_n)$ is
    $\eps$-Tur\'annical for $\Gnq[q_n]$.
\end{proof}

\section{Sharp thresholds} \label{sec:SharpThresholds}

In this section we use Friedgut's~\cite{Fried:Hunting} condition for
sharp thresholds to prove that the threshold we obtained in
Theorem~\ref{thm:random:exact} is sharp.  For a background on threshold
phenomena we refer the reader to~\cite{Fried:Hunting}.  We show the
following result.

\begin{theorem}
\label{thm:sharp:exact}
  For every integer $r\ge 3$ there
  are $c,C>0$ and a sequence of numbers
  $(c_n\in(c,C))_{n\in\mathbb N}$ such that for every
  $\gamma>0$ we have 
  \begin{align*}
    \lim_{n\to\infty}
    \Prob\big( \text{$\cR^{(r)}\big(n,(c_n-\gamma) n^{3-r}\big)$
    is Tur\'annical}\,\big) &=0\;\quad\mbox{and, }\\
    \lim_{n\to\infty}
    \Prob\big( \text{$\cR^{(r)}\big(n,(c_n+\gamma) n^{3-r}\big)$
    is Tur\'annical}\,\big) &=1\;.
  \end{align*}
\end{theorem}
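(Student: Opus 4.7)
The plan is to apply Friedgut's sharp threshold criterion~\cite{Friedgut99:Sharp} for monotone symmetric properties. The property of being Tur\'annical, viewed as a property of $r$-uniform hypergraphs on the labelled vertex set $[n]$, is monotone (adding hyperedges preserves it) and invariant under vertex-relabelling. Combined with Theorem~\ref{thm:random:exact}, which locates the threshold in the window $[cn^{3-r},Cn^{3-r}]$, this immediately yields, by monotonicity of the probability in $p$, a sequence $(c_n)\subset(c,C)$ for which $\Prob\big[\cR^{(r)}(n,c_nn^{3-r})\text{ is Tur\'annical}\big]=1/2$, say. The goal is to show that this sequence already satisfies the conclusion of the theorem.

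Suppose for contradiction that the threshold is coarse. Friedgut's theorem then provides, along some subsequence of~$n$'s, a fixed $r$-uniform hypergraph~$M$ on a bounded number of vertices and a constant $\alpha>0$ such that
\[
  \Prob\big[\cR^{(r)}(n,c_nn^{3-r})\cup\sigma(M)\text{ is Tur\'annical}\big]
  \geq
  \Prob\big[\cR^{(r)}(n,c_nn^{3-r})\text{ is Tur\'annical}\big]+\alpha,
\]
where $\sigma(M)$ denotes a uniformly random injective placement of~$M$ in~$[n]$. The remainder of the proof refutes this hypothetical booster.

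First I would use Lemma~\ref{lem:structure} with $\eps$ sufficiently small, together with the counting scheme of Lemma~\ref{lem:counting} and the analysis of~$\cG_\mathrm{E}$ carried out in the proof of Theorem~\ref{thm:random:exact}, to reduce to a single type of failing witness. The conclusion should be that a.a.s.\ if $\cR^{(r)}(n,c_nn^{3-r})$ fails to be Tur\'annical, the failure is caused by a graph in $\cG_\mathrm{E}(1)$: an $(r-1)$-partition $V_1\dcup\dots\dcup V_{r-1}$ of~$[n]$ together with a single non-crossing edge $uv\subseteq V_1$, such that no hyperedge of $\cR^{(r)}$ equals $\{u,v,w_2,\dots,w_{r-1}\}$ with $w_j\in V_j$ for $2\leq j\leq r-1$. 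Contributions from $\cG_\mathrm{D}$ and from $\cG_\mathrm{E}(\l)$ with $\l\geq2$ are already shown in the proof of Theorem~\ref{thm:random:exact} to be a.a.s.\ detected, so they contribute $o(1)$ to the failure probability.

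Next I would estimate the benefit of~$\sigma(M)$ by a first/second moment calculation. For any fixed failing witness of the above type, the probability that a random copy of~$M$ supplies one of the $\Theta(n^{r-2})$ helpful $r$-sets is $O(n^{r-2})\cdot O(n^{-r})=O(n^{-2})$, since placing~$M$ requires pinning down a specific pair $\{u,v\}$ and then fitting one of~$M$'s $O(1)$ hyperedges onto a specific $r$-tuple. A moment computation on the (random) collection of "bad pairs"---pairs $\{u,v\}$ that admit some partition turning them into a non-detected witness---then shows that the addition of $\sigma(M)$ alters the probability of being Tur\'annical by only $o(1)$, contradicting the $\alpha$-boost above and hence the coarse threshold hypothesis. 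The main obstacle is the moment calculation: one must argue that conditional on $\cR^{(r)}(n,c_nn^{3-r})$ failing, the underlying $(r-1)$-partition of the witness is essentially uniformly distributed (inherited from the $S_n$-invariance of $\cR^{(r)}(n,p)$), so that the locally supported~$\sigma(M)$ is asymptotically powerless; it is here that one must carefully exploit the symmetries of the random hypergraph together with sharp estimates on the Poisson-like distribution of failing witnesses at the critical density.
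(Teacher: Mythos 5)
Your high-level plan---invoke Friedgut's coarse-threshold criterion, locate the threshold window via Theorem~\ref{thm:random:exact}, and then refute the hypothetical local booster~$\cM$---matches the paper's strategy. But the concrete refutation step diverges substantially, and the divergence is exactly where your proposal has a genuine gap.

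The paper's version of Friedgut's theorem (Theorem~\ref{thm:FrSpec}) provides not just a booster $\cM$ but, for \emph{any} hypergraph property $\mathcal P$ satisfied a.a.s.\ by $\cR^{(r)}(n,p)$, specific hypergraphs $\cG_n\in\mathcal P$ satisfying the boost/non-boost pair~\eqref{eq:boostM},~\eqref{eq:BoostGnp}. The paper exploits this freedom by choosing $\mathcal P$ to be the property of detecting every graph with at least $\beta\binom{n}{r}$ copies of $K_r$, where $\beta:=\frac{1}{2e(\cM)}$; this holds a.a.s.\ by the argument of Theorem~\ref{thm:random:approx} since $p=\Theta(n^{3-r})\gg n^{2-r}$. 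From~\eqref{eq:BoostGnp} the fixed hypergraph $\cG_n$ is itself not Tur\'annical, so there is a fixed witness $W$ with $e(W)>\tur{r}{n}$ undetected by $\cG_n$. And because $\cG_n\in\mathcal P$, the witness $W$ has fewer than $\beta\binom{n}{r}$ $r$-cliques. A one-line union bound then gives $\Prob\big(\cM^*\text{ hits an $r$-clique of }W\big)\le e(\cM)\beta=\tfrac12$, contradicting~\eqref{eq:boostM} since $\alpha<\tfrac14$.

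You instead try to analyse the \emph{conditional} distribution of failing witnesses of the random hypergraph $\cR^{(r)}(n,c_nn^{3-r})$, reduce to witnesses of type $\cG_\mathrm{E}(1)$, and then argue by a first/second moment computation that a locally supported $\sigma(M)$ cannot repair a typical failure. You flag this moment computation yourself as ``the main obstacle'' and do not carry it out; that is the gap. It is not a small one: you would need to establish near-uniformity of the partition underlying the witness conditional on failure, together with Poisson-type control of the number of failing configurations at the critical density---none of which is supplied by the machinery in Sections~\ref{sec:random:approx}--\ref{sec:random:exact}. There is also a secondary issue with the claimed reduction: the estimates that dispose of $\cG_\mathrm{D}$ and $\cG_\mathrm{E}(\ell)$ for $\ell\ge2$ in the proof of Theorem~\ref{thm:random:exact} are derived for $p\ge Cn^{3-r}$ with $C$ the paper's 1-statement constant, and it takes an argument to check that they persist at $p=c_nn^{3-r}$ when $c_n$ may be well below that $C$. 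The paper's choice of~$\mathcal P$ sidesteps both difficulties simultaneously: it turns the random conditional witness into a deterministic one (a property of the fixed $\cG_n$), and it replaces the delicate structural classification by the single crude fact that $W$ has few $r$-cliques.
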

As usual it is reasonable to conjecture that the sequence
$(c_n)$ in this theorem converges, and as usual in the
field we are not able to prove this.

\smallskip

Before we can state Friedgut's result we need to introduce some notation.
Given two hypergraphs $\cG$ and $\cM$ with $|V(\cG)|\ge |V(\cM)|$ we write
$\cG\cup \cM^*$ for the random hypergraph obtained from the following
random experiment. Let~$\phi$ be a (uniformly chosen) random injection from
$V(\cM)$ to $V(\cG)$ and for each hyperedge~$F$ of~$\cM$ add the
hyperedge~$\phi(F)$ to~$\cG$ (without creating multiple hyperedges).
A family of $r$-uniform hypergraphs is called a \emph{hypergraph property}
if it is closed under isomorphism and under adding hyperedges.

Friedgut formulates his result for graphs. Here, we
use the corresponding hypergraph result, specialised to our situation; see
also~\cite{Friedgut99:Sharp} for a discussion of this result and for
extensions to other combinatorial structures.

\begin{theorem}[Friedgut~{\cite[Theorem~2.4]{Fried:Hunting}}]\label{thm:FrSpec}
  Suppose that Theorem~\ref{thm:sharp:exact} does not hold for some
  $r\ge 3$. Then there exists a sequence $p=p_n$, $\tau>0$, a fixed $r$-uniform
  hypergraph $\cM$ with
  \begin{equation}\label{eq:MBalanced}
    \Prob\big(\cM\subset \cR^{(r)}(n,p)\big)>\tau\;,
  \end{equation}
  and $\alpha>0$ with
  \begin{equation}\label{eq:ProbBounded}
    \alpha<\Prob\big(\cR^{(r)}(n,p)\mbox{ is
      Tur\'annical}\big)<1-3\alpha\;,
  \end{equation}
  and a constant $\eps>0$ such that, for every hypergraph property $\mathcal
  P$ which satisfies that $\cR^{(r)}(n,p)$ is a.a.s.\ in $\mathcal P$, the following holds.
  There exists an
  infinite set $Z\subset\mathbb N$ and for each $n\in Z$ a hypergraph
  $\cG_n\in\mathcal P$ such that
  \begin{align}
    \label{eq:boostM}
    \Prob\big(\cG_n\cup \cM^*\mbox{ is
      Tur\'annical}\big)&>1-\alpha\;,\\
    \label{eq:BoostGnp}
    \Prob\big(\cG_n\cup \cR^{(r)}(n,\eps p)\mbox{ is
      Tur\'annical}\big)&<1-2\alpha\;.
  \end{align}
\end{theorem}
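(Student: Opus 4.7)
The plan is to obtain Theorem~\ref{thm:FrSpec} as a direct specialization of Friedgut's sharp threshold criterion, applied to the monotone hypergraph property of being Tur\'annical at the threshold scale $n^{3-r}$ identified in Theorem~\ref{thm:random:exact}.

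First I would verify the structural prerequisites of Friedgut's framework. Being Tur\'annical is a monotone hypergraph property: if $\cF\subseteq\cF'$ and $\cF$ detects every graph~$G$ with $e(G)>\tur{r}{n}$, then so does~$\cF'$, since any hyperedge of $\cF$ inducing a $K_r$-copy in~$G$ is also a hyperedge of~$\cF'$. Closure under isomorphism of the vertex set is immediate. Thus the property satisfies the definition of a hypergraph property in the sense preceding Theorem~\ref{thm:FrSpec}.

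Next I would translate the failure of Theorem~\ref{thm:sharp:exact} into the ``coarse threshold'' hypothesis required by Friedgut's theorem. Let $c^*<C^*$ be the constants produced by the $0$- and $1$-statements of Theorem~\ref{thm:random:exact}, so that the Tur\'annical threshold probability sits in the window $[c^* n^{3-r},\, C^* n^{3-r}]$. The negation of Theorem~\ref{thm:sharp:exact} asserts that no choice of $(c_n)\in(c^*,C^*)$ and no $\gamma>0$ produces the required sharpness; equivalently there exist $\gamma>0$ and an infinite $Z\subseteq\mathbb{N}$ together with a sequence $p=p_n\in[c^*n^{3-r},C^*n^{3-r}]$ such that
\[
\alpha<\Prob\bigl(\cR^{(r)}(n,p)\text{ is Tur\'annical}\bigr)<1-3\alpha
\qquad\text{for all }n\in Z,
\]
for some fixed $\alpha>0$, verifying~\eqref{eq:ProbBounded}. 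This is precisely the form of coarse threshold assumption to which Friedgut's theorem applies.

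Finally I would invoke Friedgut's Theorem~2.4 from~\cite{Fried:Hunting} in its $r$-uniform hypergraph incarnation (discussed in~\cite{Friedgut99:Sharp}) for the monotone property ``$\cF$ is Tur\'annical,'' at the probability sequence $p_n$ from the previous step. The theorem outputs exactly the fixed ``local obstruction'' hypergraph $\cM$ satisfying~\eqref{eq:MBalanced} (with some constant $\tau>0$ coming from the fact that $\cM$ is a fixed subhypergraph appearing in $\cR^{(r)}(n,p)$ with probability bounded away from zero), a constant $\eps>0$, and, for any fixed property $\mathcal{P}$ which $\cR^{(r)}(n,p)$ enjoys a.a.s., an infinite subset $Z'\subseteq Z$ and hypergraphs $\cG_n\in\mathcal{P}$ satisfying~\eqref{eq:boostM} and~\eqref{eq:BoostGnp}. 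The freedom to intersect with $\mathcal{P}$ is automatic in Friedgut's argument: the ``boosting'' base hypergraphs $\cG_n$ form a set of probability bounded below under $\cR^{(r)}(n,p)$, and intersection with an a.a.s.\ event does not destroy that positive lower bound, so one can select $\cG_n\in\mathcal{P}$ with the desired boost property.

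The main obstacle is purely bookkeeping: Theorem~2.4 of~\cite{Fried:Hunting} is stated for monotone graph properties, and one must confirm its proof carries over verbatim to $r$-uniform hypergraphs by replacing $\binom{[n]}{2}$ with $\binom{[n]}{r}$ in the discrete Fourier/entropy manipulations. This extension is well known and explicitly mentioned in~\cite{Friedgut99:Sharp}, so no new combinatorial content is required; the entire weight of the proof rests on Friedgut's machinery, and our contribution is just the monotonicity check together with locating the threshold scale via Theorem~\ref{thm:random:exact}.
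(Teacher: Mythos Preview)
The paper does not prove this theorem at all: it is stated as a direct citation of Friedgut's criterion \cite[Theorem~2.4]{Fried:Hunting}, specialised to the Tur\'annical property, with the remark that the hypergraph version follows from the discussion in~\cite{Friedgut99:Sharp}. Your proposal is therefore not competing with any argument in the paper; rather, you are spelling out exactly the reduction the authors leave implicit --- checking monotonicity of the Tur\'annical property, translating the failure of Theorem~\ref{thm:sharp:exact} into the coarse-threshold hypothesis~\eqref{eq:ProbBounded}, and then reading off $\cM$, $\tau$, $\eps$ and the boosters $\cG_n$ from Friedgut's conclusion. That is precisely the intended route, and your sketch is accurate.
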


With this result at hand, we can now give a proof of
Theorem~\ref{thm:sharp:exact}. It turns out that we do not need to utilise
Theorem~\ref{thm:FrSpec} in its full strength; in
particular we shall not use assertion~\eqref{eq:MBalanced}.

\begin{proof}[Proof of Theorem~\ref{thm:sharp:exact}]
Suppose that Theorem~\ref{thm:sharp:exact} does not hold for
some $r\ge 3$. Let $p_n$, the $r$-uniform hypergraph $\cM$, and
$\alpha>0$ be given by Theorem~\ref{thm:FrSpec}. In particular, by~\eqref{eq:ProbBounded} we have that $\alpha<1/4$. It follows
from~\eqref{eq:ProbBounded} and from
Theorem~\ref{thm:random:exact} that 
\[cn^{3-r}\le p\le Cn^{3-r}\;,\]
for some absolute constants $c,C>0$.
Let $\beta:=\frac{1}{2e(\cM)}$ and let $\mathcal P$ be the family
of $n$-vertex hypergraphs which detect every $n$-vertex
graph $F$ with at least $\beta \binom{n}{r}$ $r$-cliques. It
follows from the proof of Theorem~\ref{thm:random:approx} 
that a.a.s.\ $\cR^{(r)}(n,p)\in \mathcal P$.

Let now $Z\subset \mathbb N$ 
and $(\cG_n)_{n\in Z}$
be given by
Theorem~\ref{thm:FrSpec}. We will derive a contradiction using
just a single hypergraph $\cG_n$, $n\in Z$. Indeed,
from~\eqref{eq:BoostGnp} we see that $\cG_n$ itself cannot be
Tur\'annical. Let $W$ be a graph which witnesses this, i.e., $W$
is an $n$-vertex graph with more than $\tur{r}{n}$ edges which
is not detected by $\cG_n$. By the definition of $\mathcal P$ and since
$\cG_n\in\mathcal P$, the graph $W$
contains less than $\beta\binom{n}{r}$ $r$-cliques. If $\cG_n\cup
\cM^*$ is Tur\'annical then at least one hyperedge of $\cM$ must be
placed on an $r$-clique of $W$. 
Therefore we have
\[\Prob\big(\cG_n\cup \cM^*\mbox{ is Tur\'annical}\big)\le
e(\cM)\beta<\frac12\;,\]
which contradicts~\eqref{eq:boostM}.
\end{proof}

\section{Random restrictions}
\label{sec:conc}

Traditional extremal combinatorics deals with questions in the
following framework. Given a combinatorial structure $\cS$ (such as the edge set
of the complete graph $K_n$, or the set $2^{[n]}$ of subsets of $[n]$) and a
monotone increasing parameter $f\colon 2^{\cS}\rightarrow\mathbb{N}$ (such as the
minimum degree of $H\subset K_n$, or the number of sets in the set family
$H\subset 2^{[n]}$), we ask:
\begin{quote}
What is the maximum possible value $f(H)$ for $H\subset\cS$ satisfying a set of
restrictions $\cR$?
\end{quote}
Often the restrictions $\mathcal R$ are simply \emph{all} substructures
of~$\cS$ of a certain type.
For example, in the setting of Tur\'an's theorem every $r$-tuple of
vertices forbids a clique; in that of Sperner's theorem~\cite{Sperner}, every
pair of sets $A\subset B\subseteq [n]$ is forbidden to be in the set family
$H\subset 2^{[n]}$.

In this framework there are two places where randomness may come into play.
Firstly, one could choose~$\cS$ to be a random structure (and thus~$H$ be a
substructure of a random structure). A famous example of this type of
randomness is the Kohayakawa-{\L}uczak-R\"odl conjecture concerning a
version of Tur\'an's theorem for random graphs
(see~\cite{KLR}) mentioned already in the introduction. 
Versions of the famous Erd\H{o}s-Ko-Rado theorem 
for random hypergraphs as studied by
Balogh, Bohman, and Mubayi~\cite{BalBohMub} form another example.

Secondly, the restriction
set can be relaxed to a random subset of all possible
restrictions~$\mathcal R$. This is exemplified in
Theorems~\ref{thm:random:approx} and~\ref{thm:random:exact} in the context
of Tur\'an's theorem. Moreover, the two types of randomness can be
combined, as shown in Theorem~\ref{thm:TurForGnp}.

Obviously, similar randomised versions can be formulated for many
other problems. Probably the closest one to the present paper would be a
variant of the Erd\H{o}s-Stone theorem about the extremal number of $H$-free
graphs with random restrictions. While the statement and the proof of
Theorem~\ref{thm:random:approx} translates \emph{mutatis mutandis} to that
setting when $\chi(H)\ge 3$, obtaining either a proof for $\chi(H)=2$ or an
analogue of Theorem~\ref{thm:random:exact} seem to be significantly
harder. 
We conclude by mentioning two additional problems which seem
interesting for further research.

\smallskip 

\noindent{\bf Ramsey theory.}
  Graph Ramsey theory deals with estimating the parameter $R(H)$, which is the
  smallest number $n$ such that any two-colouring of edges of the complete graph
  $K_n$ contains a monochromatic copy of $H$.
 
  In a randomised version of this problem of the first type mentioned
  above, we colour the edges of the random graph $\Gnq$ instead of $K_n$
  and search for a monochromatic copy of~$H$ in such a colouring.  The
  threshold for this problem was determined by R\"odl and
  Ruci{\'n}ski~\cite{RoRu:RamseyThreshold95} (see also Friedgut, R\"odl and
  Schacht~\cite{FrRoSch:Randomized} and Conlon and Gowers~\cite{ConGow:KLR}
  for some recent progress).
  
  Concerning the second approach for randomisation mentioned above, we
  suggest considering the following problem. Given~$n$ and a
  probability~$p$, let $\cR(n,p)$ be a set of copies of~$H$ in~$K_n$
  obtained by picking $H$-copies independently at random with
  probability~$p$ from the set of all copies of~$H$ in~$K_n$. What is the
  threshold $p=p_n$ such that a.a.s.\ $\cR=\cR(n,p)$ has the property that
  for every two-edge-colouring of $K_n$, there is a monochromatic copy of
  $H$ contained in $\cR$?

\smallskip 

\noindent{\bf VC-dimension.}
  The celebrated Sauer-Shelah Lemma~\cite{Sauer:VC,Shelah:VC} states that if
  $\mathcal A$ is a family of subsets of $[n]$ with $|\mathcal
  A|>\binom{n}{0}+\ldots+\binom{n}{k-1}$ then there is a set $X\subset [n]$ of
  size $k$ which is \emph{shattered} by $\mathcal A$, i.e., for every $Y\subset
  X$, there is $A\in\mathcal A$ such that $Y=X\cap A$.
  
  A randomised variant of this Lemma of the first type mentioned above
  would generate a random family $\mathcal X=\binom{[n]}{k}_p$ of $k$-sets
  in $[n]$, each $k$-set being present in this family independently
  with probability $p=p_n$. The question is then: How large must
  $|\mathcal A|$ be in order to guarantee a shattered $k$-set
  $X\in\mathcal X$?

  A randomised version of the second type, instead, would randomise
  the concept of a shattering in the Sauer-Shelah Lemma.  More precisely, a
  \emph{$p$-shattering} does not require every subset $Y\subset X$ to be
  represented as $X\cap A$ for some~$A\in \mathcal A$, but only for each
  $X\subset[n]$ of size $k$ a family of subsets~$Y$ which are selected
  randomly and independently from $2^X$ with probability~$p$.  The question
  then is: Given $0<c\le 1$, what is the threshold $p=p_n$ such that
  a.a.s.\ there exists a set family with
  $c\big(\binom{n}{0}+\ldots+\binom{n}{k-1}\big)$ members which does not
  even $p$-shatter any $k$-set in $[n]$?

\section*{Acknowledgement}
We thank Yoshiharu Kohayakawa for stimulating discussions, and an anonymous
referee for detailed comments.


\bibliographystyle{amsplain_yk} 
\bibliography{bibl}
\end{document}